\title{An averaging formula for Nielsen numbers on infra-solvmanifolds}
\author{
Karel Dekimpe}\thanks{Research supported by long term structural funding - Methusalem grant of the Flemish Government.}
\author{Iris van Den Bussche}
\address{
 {KU Leuven Campus Kulak Kortrijk},
{8500 Kortrijk},
{Belgium}}
\email{Karel.Dekimpe@kuleuven.be}
\email{iris.vandenbussche@kuleuven.be}
\newcommand{\p}{\varphi}
\newcommand{\al}{\alpha}
\newcommand{\clp}[1]{[#1]_{\varphi}}
\newcommand{\cl}[2]{[#1]_{#2}}
\newcommand{\R}{\mathbb{R}}
\newcommand{\C}{\mathbb{C}}
\newcommand{\Q}{\mathbb{Q}}
\newcommand{\RR}[1]{\mathcal{R}(#1)}
\newcommand{\Aff}{\mathrm{Aff}}
\newcommand{\aff}{\mathrm{aff}}
\newcommand{\Aut}[1]{\mathrm{Aut}(#1)}
\newcommand{\GLZ}[1]{\mathrm{GL}_{#1}(\Z)}
\newcommand{\SLZ}[1]{\mathrm{SL}_{#1}(\Z)}
\newcommand{\GLQ}[1]{\mathrm{GL}_{#1}(\Q)}
\newcommand{\F}[1]{\operatorname{fix}(#1)}
\newcommand{\ti}{\tilde}
\newcommand{\ii}[1]{{#1}^{-1}}
\newcommand{\Z}{\mathbb{Z}}
\newcommand{\N}{\mathbb{N}}
\newcommand{\PI}{\Pi}
\newcommand{\tf}{torsion free }
\newcommand{\nor}{\triangleleft}
\newcommand{\NR}{$\mathcal{NR}$-}
\newcommand{\f}{\leq_{f}}
\newcommand{\G}{\Gamma}
\newcommand{\infra}{infra-solv}
\renewcommand{\sf}{self-map }
\newcommand{\iso}[2]{\sqrt[^{#1}]{#2}}
\newcommand{\PP}{\mathrm{P}(\R^h)}
\newcommand{\s}{strongly torsion free $S$-}
\newcommand{\di}[1]{\lvert\mathrm{det}(I-#1)\rvert}
\newcommand{\de}[1]{\mathrm{det}(I-#1)}
\newcommand{\X}{\R^h/\PI}
\renewcommand{\S}{\R^h/K}
\newcommand{\eig}[1]{\mathrm{eig}(#1)}
\newcommand{\GL}[1]{\operatorname{GL}_{#1}(\C)}
\newcommand{\vp}{polycyclic-by-finite }
\newcommand{\stepb}[1]{\medskip\noindent\textbf{#1}}
\newcommand{\stepi}[1]{\medskip\noindent\textit{#1}}
\newcommand{\ap}{almost $\Phi$-}
\newcommand{\Sn}{\mathcal{S}_n}
\newcommand{\ad}{\mathrm{ad}}
\newcommand{\Ad}{\mathrm{Ad}}
\newcommand{\Lg}{{\mathfrak g}}
\newcommand{\sgn}[1]{\operatorname{sgn}(#1)}
\newcommand{\fix}[1]{\operatorname{fix}(#1)}
\newcommand{\pd}[1]{\operatorname{P}(\R^{#1})}
\newcommand{\pp}[2]{\operatorname{P}(\R^{#1},\R^{#2})}
\newcommand{\Lm}[1]{\Lambda_{#1}}
\newcommand{\lm}[1]{\lambda_{#1}}
\newcommand{\x}[1]{\overline {x}_{K_{#1}}}
\newcommand{\xx}[2]{{\overline{#1}}_{K_{#2}}}
\newcommand{\RK}[1]{\R^{K_{#1}}}
\newcommand{\Rk}[1]{\R^{k_{#1}}}
\newcommand{\vc}[2]{
\begin{pmatrix}
#1\\
\vdots\\
#2
\end{pmatrix}
}
\newcommand{\vcl}[4]{
\left(
\arraycolsep=0.9pt
\begin{array}{rcl}
#1 &+ &#2\\
 &\vdots &\\
#3 &+ &#4
\end{array}
\right)
}
\newcommand{\End}[1]{\operatorname{End}(#1)}
\newcommand{\eindebewijs}{\hfill\qedsymbol}
\newcommand{\0}[1]{\lvert #1 \rvert_0}
\newcommand{\oo}[1]{\lvert #1 \rvert_\infty}
\newcommand{\cv}{\mathcal{C}(\ti M, \pi, M)}
\newcommand{\K}{K}
\newcommand{\KK}{K'}
\newcommand{\sss}{\hspace*{-0.7mm}}
\newtheorem{theorem}{Theorem}[section]
\newtheorem{prop}[theorem]{Proposition}
\newtheorem{lemma}[theorem]{Lemma}
\newtheorem*{lomma}{Lemma~\ref{matrixanalysis}}
\newtheorem{cor}[theorem]{Corollary}
\theoremstyle{definition}
\newtheorem{definition}[theorem]{Definition}
\newtheorem{example}[theorem]{Example}
\theoremstyle{remark}
\newtheorem{remark}[theorem]{Remark}
\newtheorem{notation}[theorem]{Notation}
\begin{document}
\begin{abstract}
Until now only for special classes of infra-solvmanifolds, namely infra-nilmanifolds and infra-solvmanifolds of type $(R)$, there was a formula available for computing the Nielsen number of a self-map on those manifolds. In this paper, we provide a general averaging formula which works for all self-maps on all possible infra-solvmanifolds and which reduces to the old formulas in the case of infra-nilmanifolds or infra-solvmanifolds of type~$(R)$. Moreover, when viewing an infra-solvmanifold as a polynomial manifold, we recall that any map is homotopic to a polynomial map and we show how our formula can be translated in terms of the Jacobian of that polynomial map.
\end{abstract}
\maketitle

\section{Introduction}

Let $f:M\to M$ be a self-map of a closed manifold $M$. The Lefschetz number of $f$ is defined as the alternating sum of the traces of the induced maps on the homology groups of $M$:
\[ L(f)=\sum_{i=0}^{\dim(M)} (-1)^i {\rm Tr} (f_{\ast,i}:
H_i(M,\R) \to H_i(M,\R)).\]
The famous Lefschetz fixed point theorem (see e.g.\ \cite{hatc02-1}) states that when $L(f)\neq 0$, the map $f$ must have at least one fixed point. 
As $L(f)=L(g)$ whenever $g$ is homotopic to $f$, we have that when $L(f)\neq 0$, any map $g\simeq f$ homotopic to $f$ must have at least one fixed point.

However, the exact value of $L(f)$ does not provide any information on the (least) number of fixed points one should expect for a given map $g \simeq f$.

In Nielsen fixed point theory one tries to overcome this problem by defining a second number, the Nielsen number $N(f)$ of the map $f$, which is also a homotopy invariant and which contains more information than $L(f)$. We refer the reader to \cite{brow71-1,jian83-1,jm06-1} for more information on this number, but vaguely speaking the definition of $N(f)$ goes as follows. 

First, one decomposes the set of fixed points of $f$ into so-called fixed point classes. 
To each of these fixed point classes one then attaches an integer, the index of that fixed point class. A fixed point class is said to be essential if its index is nonzero. The main idea behind this index is that an essential fixed point class can not vanish under a homotopy. 
The Nielsen number $N(f)$ of $f$ is then equal to the number of essential fixed point classes. 

It follows that each map $g\simeq f$ has at least $N(f)$ fixed points. 
If $\dim(M)\geq 3$ we even have more:
\begin{theorem}[Wecken \cite{weck42-1}]
Let $f:M\to M$ be a self-map of a manifold $M$ with $\dim(M)\geq 3$. Then any map $g$ homotopic to $f$ has at least $N(f)$ fixed points. Moreover, there is a map $g$ homotopic to $f$ which has exactly $N(f)$ fixed points.
\end{theorem}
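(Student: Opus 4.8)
The plan is to prove the two assertions separately: the lower bound $\#\operatorname{Fix}(g)\geq N(f)$ holds in every dimension and is essentially definitional, while the realization statement is where the hypothesis $\dim(M)\geq 3$ is genuinely used. (A full treatment is in \cite{jian83-1}.)

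\emph{Lower bound.} A homotopy $H$ from $f$ to $g$ induces the \emph{fixed point class correspondence}: a bijection between the fixed point classes of $f$ and those of $g$ under which corresponding classes have equal index. Since an essential class has nonzero index it is in particular non-empty, so $g$ has at least as many fixed points as it has essential fixed point classes, that is, at least $N(g)=N(f)$. The only things to check are the standard homological facts underlying the construction: the fixed point index of an isolated fixed-point set lying in a Euclidean chart is a well-defined integer (a local degree), it is additive over disjoint pieces, and it is preserved by the class correspondence.

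\emph{Realizing $N(f)$.} For the second statement I would perform three successive surgeries on $f$, each up to homotopy. \textbf{(1) General position.} By transversality, homotope $f$ so that its graph is transverse to the diagonal in $M\times M$; since $M$ is closed, $\operatorname{Fix}(f)$ becomes a finite set (each point nondegenerate, of index $\pm1$), and one may further arrange all fixed points to lie in pairwise disjoint Euclidean charts. Partition them according to fixed point class. \textbf{(2) Coalescing within a class.} If $x_0,x_1$ lie in the same class there is a path $c$ from $x_0$ to $x_1$ with $c\simeq f\circ c$ rel endpoints. Using $\dim(M)\geq 3$, take $c$ to be an embedded arc whose interior, together with a small tube $U\cong\R^{n-1}\times[0,1]$ about it, misses every other fixed point, and isotope $c$ and $f\circ c$ to be ``unlinked'' inside $U$ (the Whitney-type move that requires the codimension hypothesis). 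Then build a homotopy of $f$ supported in $U$ that slides $x_1$ along $c$ onto $x_0$; the resulting map has a single fixed point in place of $x_0,x_1$, of index $\mathrm{ind}(f,x_0)+\mathrm{ind}(f,x_1)$ by additivity. Iterating, each fixed point class is reduced to one fixed point whose index equals that of the class. \textbf{(3) Removing an inessential fixed point.} After (2) an inessential class is a single fixed point $x$ of index $0$ sitting alone in a chart; in local coordinates the map $y\mapsto y-f(y)$ is nonzero on the bounding sphere, and its normalization $\partial D^{n}\to S^{n-1}$ has degree equal to the index, namely $0$, hence is null-homotopic and extends over $D^{n}$ into $S^{n-1}$; that extension realizes a homotopy of $f$, supported in the chart, with no fixed point in the chart. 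Applying (3) to every inessential class leaves exactly one fixed point for each essential class and none otherwise, i.e.\ exactly $N(f)$ fixed points.

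\emph{Main obstacle.} The delicate step is (2): making the guiding arc and its image embedded, disjoint from the remaining fixed points, and mutually unknotted, so that the coalescing homotopy spawns no new fixed points. This is precisely where $\dim(M)\geq 3$ is indispensable, and it is the only place the dimension hypothesis is really needed; the classical surface counterexamples show the realization statement fails for $\dim(M)=2$.
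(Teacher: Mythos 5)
The paper does not prove this statement: it is quoted as a classical theorem of Wecken with a citation to \cite{weck42-1}, and the standard modern treatment is in Jiang's book \cite{jian83-1}, so there is no in-paper argument to compare yours against. Your outline is the standard Wecken--Jiang proof and is correct in its main lines: the lower bound via the index-preserving bijection of fixed point classes under homotopy, then general position, coalescing Nielsen-equivalent fixed points along an arc (the only place $\dim(M)\geq 3$ enters), and cancelling an isolated index-zero fixed point by the Hopf extension argument ($\deg=0$ on $\partial D^n$ implies null-homotopic, hence extendable into $S^{n-1}$ over $D^n$). Two points where your sketch compresses genuine work: in step (2) the existence of the path $c$ with $c\simeq f\circ c$ rel endpoints is only the definition of Nielsen equivalence; to actually build the fixed-point-free sliding homotopy you must use the two-parameter homotopy $H$ between $c$ and $f\circ c$, pushed into general position in $M\times I$, and verify that the resulting deformation of $f$ supported in the tube $U$ introduces no new fixed points on $\partial U$ --- this is the ``uniting fixed points'' lemma whose proof occupies most of the effort in \cite{jian83-1}. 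Also, step (1) implicitly smooths (or triangulates) a merely continuous $f$ before invoking transversality; that approximation step should be stated. As a proof sketch the proposal is sound; as a complete proof it would need those two lemmas filled in.
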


So $N(f)$ contains full information on the least number of fixed points for all maps $g\simeq f$.

In contrast to $L(f)$, the Nielsen number $N(f)$ is unfortunately very hard to compute in general. 
For some classes of manifolds, however, there is a strong relation between the Nielsen number and the Lefschetz number. 
D.~Anosov \cite{anos85-1} showed that $N(f)=|L(f)|$ for any self-map $f:M\to M$ on a nilmanifold $M$. (See also \cite{fh86-1}.) 
Recall that a nilmanifold $M$ is a quotient space $\Gamma \backslash G$, where $G$ is a connected and simply connected nilpotent Lie group and $\Gamma$ is a discrete and uniform subgroup (i.e.~a cocompact lattice) of $G$. 
Anosov's result was generalised to the class of \NR solvmanifolds by E.~Keppelmann and C.~McCord in 1995 \cite{km95-1}. We refer to the next section for more information on this class of manifolds.

An infra-nilmanifold is a manifold that is finitely covered by a nilmanifold. Based on the result of Anosov and using the fact that any self-map of an infra-nilmanifold is homotopic to a so-called affine map, J.B.~Lee and K.B.~Lee \cite{ll09-1} were able to prove a nice formula allowing to compute the Nielsen number of any self-map of an infra-nilmanifold. 
More specifically, they showed that for any infra-nilmanifold $M$ there is a nilmanifold $\tilde{M}$ that finitely covers $M$ and such that any self-map $f:M\to M$ lifts to a self-map $\tilde{f}:\tilde{M}\to \tilde{M}$.
The result of J.B.~Lee and K.B.~Lee then says that $N(f)$ is the average of all $N(\tilde{f})=|L(\tilde{f})|$, where $\tilde{f}$ ranges over all possible lifts of $f$ to $\tilde{M}$. Moreover, one can express $N(f)$ (and $N(\tilde{f})$) easily in terms of the induced morphism $f_\ast: \Pi_1(M) \to \Pi_1(M)$. 

We remark here that there is a slight (and straightforward) generalisation of this result to the class of infra-solvmanifolds of type $(R)$. This is a rather special class of manifolds sharing many properties with the class of infra-nilmanifolds. (See \cite{ll09-1}.)

In this paper, we completely settle the problem of computing the Nielsen number of a self-map on any infra-solvmanifold by proving a general averaging formula for all possible infra-solvmanifolds. 
Moreover, this formula allows us  to express the Nielsen numbers of a self-map $f$ only in terms of the induced morphism $f_\ast: \Pi_1(M) \to \Pi_1(M)$ on the fundamental group.
We also show how to retrieve the previous averaging formulas (for infra-nilmani\-folds and infra-solvmanifolds of type $(R)$) from our general averaging formula.

In addition, we recall that any infra-solvmanifold can be seen as a so-called polynomial manifold and that any self map of such a polynomial manifold is homotopic to a polynomial map. We then translate our general averaging formula to a formula using the Jacobian of the polynomial map.

\section{Infra-solvmanifolds}\label{is}

The manifolds we are studying in this paper are infra-solvmanifolds. These are smooth manifolds which are finitely covered by a solvmanifold. Recall that a solvmanifold is obtained as a quotient of a connected solvable Lie group by a closed subgroup. In this paper, all (infra-)solvmanifolds are assumed to be compact manifolds.

In the literature one can find several possibilities for the exact definition of an infra-solvmanifold; 
Kuroki and Yu present five equivalent definitions in \cite{ky13-1}.

For our purposes, it suffices to know that an infra-solvmanifold is finitely covered by a solvmanifold and that two infra-solvmanifolds with isomorphic fundamental groups are diffeomorphic (see e.g.\ \cite[Corollary 1.5]{baue04-1}). 

\begin{remark}\label{paper of lee}
At this point, we want to remark that some authors use a more restrictive definition of the notion of a solvmanifold and they only use the term solvmanifold for quotients of a simply connected solvable Lie group by a lattice (and not a general closed subgroup). This is a much more restrictive notion and e.g.\ the Klein bottle is not a solvmanifold in this more restricted sense, but it is one for the more general definition used in this paper. For example in \cite{leejb}, the author is studying a class of manifolds which are refered to as infra-solvmanifolds modeled on Sol$_0^4$ and in that paper it is said that these manifolds are not finitely covered by a solvmanifold. However, for the more general definition, these manifolds are in fact themselves solvmanifolds. 
\end{remark}

\subsection{Fundamental group structure}

The algebraic structure of the fundamental groups of solvmanifolds and infra-solvmanifolds is well known. Indeed,
by a result of Wang \cite{wang56-1}, a group $K$ is the fundamental group of a solvmanifold if and only if $K$ fits in a short exact sequence 
\begin{equation}\label{S-group}
1 \to N \to K \to \Z^k\to 1 
\end{equation}
where $N$ is a finitely generated torsion free nilpotent group. 
If $k=0$, then $K=N$ is the fundamental group of a nilmanifold, which is a special type of solvmanifold.
We will refer to a group $K$ fitting in a short exact sequence  of the above form (\ref{S-group}) as a \emph{\s group.} Note that any \s group is a poly--$\Z$ group. 

A group $\PI$ is isomorphic to the fundamental group of 
an infra-solvmanifold if and only if $\PI$ is a torsion free polycyclic-by-finite group (or a torsion free virtually poly--$\Z$ group). 
Every polycyclic-by-finite group $\PI$ admits a series of characteric subgroups  $1\nor  \PI_s\nor\cdots\nor  \PI$ having finite or abelian factors $\PI_i/\PI_{i+1}$. This follows by inductively applying the fact that any infinite polycyclic-by-finite group contains a nontrivial free abelian group as a characteristic subgroup.

\subsection{Infra-solvmanifolds as polynomial manifolds.}\label{polymani}

Let $M$ be an infra-solv\-man\-i\-fold with fundamental group $\PI$. In this paper, we will consider $M=\tilde{M}/\PI$ as being the quotient of the universal covering space $\tilde{M}$ of $M$ by the action of $\PI$ as covering transformations.  

It is known that $\tilde{M}$ is diffeomorphic to $\R^h$ for some $h$. 
In fact, by the work of O.~Baues \cite[Corollary 4.5]{baue04-1}, we can assume that $M = \R^h/\PI$ where the covering group $ \PI$ is acting on $\R^h$ via 
a bounded group of polynomial diffeomorphisms of $\R^h$.
Recall that a map $p:\R^h \to \R^h$ is a \emph{polynomial diffeomorphism} of $\R^h$ if $p$ is bijective and both $p$ and $p^{-1}$ are expressed by means of polynomials in the usual coordinates of $\R^h$. 
Denote by $\pd h$ the group of polynomial diffeomorphisms of $\R^h$. 
 A subgroup $C$ of $\pd h$ is called \emph{bounded} if the degrees of $p$, $p\in C$, have a common bound. An action $\rho:\PI\to\pd h$ is of \emph{bounded degree} if $\rho(\PI)$ is bounded.
We will work with bounded degree actions $\rho:\PI\to\pd h$ in Sections \ref{pol} and \ref{6}.

\section{Nielsen theory}
In this section, we give 
a brief exposition of topological fixed point theory, following the book by Jiang \cite{jian83-1}.

Let $M$ be a compact manifold, and consider a continuous self-map $f:M\to M$ on $M$. 
In Nielsen-Reidemeister theory, one studies the fixed point set $\F f$ %
of $f$ by studying the fixed points of the \emph{lifts} of $f$ to the universal cover $\pi:\ti M\to M$. Indeed, from the commutativity of the diagram
$$\begin{CD}
\tilde M @>\ti f>> \ti M\\
@VV \pi V @VV \pi V\\
M @>f>> M,\\
\end{CD}$$
it easily follows that
$$\F f=\bigcup_{\ti f} \pi(\F {\ti f}),$$
where $\ti f$ ranges over all lifts of $f$.

The lifts of the identity form the \emph{covering transformations} of $\pi:\tilde M\to M$.  These covering transformations form a group, which we denote by $\cv$.
They act on the lifts of $f$ by conjugation.
The corresponding conjugacy classes are called the \emph{lifting classes} of $f$. 
We denote the lifting class of a lift $\ti f$ by $[\ti f]$. 

The interest in lifting classes lies in the following observation:

\medskip
\noindent
{Let $\ti f$, $\ti f'$ be lifts of $f$. }
\begin{enumerate}
    \item {If $\ti f\sim\ti f'$, then $\pi(\F {\ti f})=\pi(\F {\ti f'})$.}

\item {If $\ti f\not \sim\ti f'$, then $\pi(\F {\ti f})\cap \pi(\F {\ti f'})=\emptyset$.}
\end{enumerate}

Accordingly, $\pi(\F {\ti f})$ is called the \emph{fixed point class} of $f$ determined by $[\ti f]$. 
As $\F f=\bigcup_{\ti f}\,\pi(\F {\ti f})$, the fixed point set of $f$ splits into a disjoint union of fixed point classes.

The \emph{Reidemeister number of $f$}, denoted $R(f)$, is the number of lifting classes of $f$, or equivalently,
the number of fixed point classes of $f$. It is either a positive integer or infinite. The Reidemeister number is a homotopy invariant.

The \emph{Nielsen number of }$f$, also a homotopy invariant, is the number of \emph{essential} fixed point classes of $f$. Heuristically an essential fixed point class can be understood as a nonempty fixed point class that never vanishes under a homotopy. The precise definition of an essential fixed point class can be found in \cite{brow71-1, jm06-1, jian83-1}. 

The Reidemeister and Nielsen number of $f$ relate to the number of fixed points of $f$ in the following way:
$$N(f)\leq \#\F f\leq R(f).$$

Lifting classes have the following algebraic characterisation. 
Fix a lift $\ti f_0$ of $f$. 

It is well known that $\cv$ is isomorphic to $\PI:=\Pi_1(M)$, the fundamental group of $M$. If we view elements $\alpha$ of $\Pi_1(M)$ as being covering transformations,
any lift $\tilde f$ of $f$ can be  written uniquely as $\tilde f = \alpha\ti f_0$ for some $\alpha\in\PI$.

In particular, $\tilde{f}_0 \alpha$ is a lift of $f$ for every $\alpha \in \Pi$, so there exists a unique $f_\ast(\alpha) \in \Pi$ satisfying $f_\ast(\alpha) \tilde{f}_0 = \tilde{f}_0\, \alpha$. This defines a morphism $f_\ast : \Pi \to \Pi$\label{induced-endo}, which we call the morphism induced by $f$ on $\Pi$ (with respect to $\tilde f_0$).
We remark in passing that $f_\ast$ agrees with the usual induced morphism $f_\#:\Pi_1(M,x) \to \Pi_1(M,f(x))$, provided one chooses the correct identification of $\Pi_1(M,x)$ and $\Pi_1(M,f(x))$ with $\cv$.

Subsequently, it is easily verified that two lifts $\alpha\ti f_0$, $\beta\ti f_0$ belong to the same lifting class if and only if 
$$\alpha=\gamma\beta f_\ast(\ii\gamma) \text{ for some }\gamma\in\PI,$$
that is, $\alpha$ and $\beta$ are \emph{$f_\ast$-twisted conjugates}.
The above condition defines an equivalence relation $\sim_{f_\ast}$ on $\PI$; let $\RR{f_\ast}:=\PI/\sim_{f_\ast}$ denote the resulting orbit space and let $[\alpha]_{f_\ast}$ denote the equivalence class of $\alpha\in\PI$. 
We have thus established a one-to-one correspondence 
$$\begin{array}{ccc}
    \{\text{lifting classes of }f\} 
    & \overset{1:1}{\longleftrightarrow}
    & \RR{f_\ast}
    \\[0.2cm]
    [\alpha\ti f_0]
    & \longleftrightarrow 
    & [\alpha]_{f_\ast}
\end{array}$$
In particular, the number of ${f_\ast}$-twisted conjugacy classes is given by the Reidemeister number $R(f)$. We also write $R({f_\ast}):=R(f)$ as this number can be computed directly from ${f_\ast}$, and we can define it for any morphism $\p:\PI\to\PI$ via $R(\p):=\#\RR\p$.

Using the above correspondence, we conclude that
$$\F f=\bigsqcup_{[\alpha]_{f_\ast}\in\RR{f_\ast}} \pi(\F {\alpha\ti f_0}).$$

The morphism $f_\ast$ does depend on the chosen lift $\tilde f_0$: if we choose $\alpha\tilde f_0$ as reference lift instead, $f$ induces the morphism $\tau_\alpha\circ f_\ast$
with $\tau_\alpha$ the inner morphism $\tau_\alpha:\PI\to\Pi:\gamma\mapsto \alpha\gamma\ii \alpha$.

In this paper, we will express $N(f)$ solely in terms of the morphism $f_\ast$.
\section{An averaging formula on infra-solvmanifolds}

Let $M$ be any infra-solvmanifold. 
The aim of this section is to show that the Nielsen number of a \sf $f:M\to M$ equals the average of the Nielsen numbers of its lifts to any finite cover $S$ of $M$ satisfying the following two conditions:
\begin{itemize}
    \item every map $f:M\to M$ lifts to a map on  $S$;
    \item $S$ is an \emph{\NR solvmanifold}.
\end{itemize}
\NR solvmanifolds were introduced by Keppelmann and McCord in 1995 \cite{km95-1} as a class of solvmanifolds satisfying the Anosov relation, that is, $N(f)=|L(f)|$ for every \sf $f$. 
We recall the relevant properties of these manifolds in the following subsection. 
We show that $M$ always has a finite cover $S$ satisfying the above two conditions in Subsection~\ref{netex}.

\subsection{\NR solvmanifolds.}

Let $K$ be a \s group. 
Let $N$ be the subgroup $\iso{K}{[K,K]}$, where for a group $G$ and a subgroup $H$ of $G$ we let $\iso{G}{H}$ denote the \textit{isolator} $\{g\in G\mid \exists k\in \N\setminus\{0\}: g^k\in H\}$ of $H$ in $G$.
As $K$ is a \s group, $N$ is nilpotent, say of class $c$.
Let $\gamma_i(N)$ denote the $i$-th term of the lower central series of $N$, and put $N_i:=\iso{N}{\gamma_i(N)}$. 
Then $1\nor N_c\nor\cdots\nor N_1=N$ forms a central series of $N$ with free abelian factors $N_i/N_{i+1}$.
As the $N_i$ are normal subgroups of $K$, we get well-defined actions 
$$\rho_i:K/N\to \Aut{N_i/N_{i+1}}: \bar k\mapsto \, (xN_{i+1}\mapsto kx\ii k N_{i+1})$$ by conjugation. \label{rho_i}

\begin{definition}
We say that $K$ satisfies the \emph{\NR property} if for every $i\in \{1,\ldots,c\}$ and for all $\bar k$ in $K/N$ the automorphism $\rho_i(\bar k)$ (on the free abelian group $N_i/N_{i+1}$) has no nontrivial roots of unity as eigenvalues.
\end{definition}

To see that the \NR property does not really depend on the chosen series $1\nor N_c\nor \cdots\nor N_1=N$ of normal subgroups of $N$, we  introduce the following notations. 
Let $\p:\PI\to \PI$ be an endomorphism on a polycyclic-by-finite group $\PI$. 
Suppose that $$\PI_*: 1=\PI_{s+1}\nor \PI_s\nor \cdots \nor \PI_1=\PI$$ is a normal series of $\PI$ with finite or abelian factors $G_i:=\PI_i/\PI_{i+1}$ such that $\p(\PI_i)\subseteq \PI_i$ for every $i$ in $\{1,\ldots,s\}$. 
Then $\p$ induces endomorphisms $\p_i:G_i\to G_i$, which in turn induce endomorphisms $\bar \p_i$ on $G_i/\tau(G_i)$, where $\tau(G_i)$ is the set of torsion elements of $G_i$.
Note that $\tau(G_i)$ is indeed a subgroup of $G_i$ as $G_i$ is finite or abelian.
The groups $G_i/\tau(G_i)$ are free abelian groups of finite rank. 
Let $\eig {\bar\p_i}$ denote the set of eigenvalues of $\bar\p_i$, where we agree that $\eig{\bar\p_i}=\emptyset$ if $G_i/\tau(G_i)$ is trivial. 

\begin{lemma}
The set $\bigcup_{i=1}^s\eig{\bar\p_i}$ is independent of the chosen series.
\end{lemma}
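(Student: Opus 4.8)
The statement is a standard "invariance of the spectrum under refinement" result, so the plan is to reduce to comparing two series by a common refinement, using the Schreier-type argument. First I would observe that it suffices to show the following: if $\PI_*$ is as in the lemma and $\PI_*'$ is a refinement of $\PI_*$ (i.e.\ each $\PI_i$ appears as some $\PI_j'$) which is still $\p$-invariant and still has finite or abelian factors, then $\bigcup_i \eig{\bar\p_i}$ computed from $\PI_*$ equals the one computed from $\PI_*'$. Indeed, any two $\p$-invariant normal series with finite-or-abelian factors admit a common $\p$-invariant refinement: intersect the two series termwise (the groups $\PI_i \cap \PI_j'$), which is again $\p$-invariant since $\p(\PI_i\cap\PI_j')\subseteq \PI_i\cap\PI_j'$, and whose factors are subquotients of the original finite-or-abelian factors, hence again finite or abelian. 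The usual Zassenhaus/Schreier bookkeeping guarantees this is a genuine refinement of both. So the lemma follows once the refinement case is settled.

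For the refinement step, it is enough to treat the insertion of a single extra term: suppose we insert $H$ with $\PI_{i+1}\nor H\nor \PI_i$, $\p(H)\subseteq H$. Writing $G=G_i=\PI_i/\PI_{i+1}$, $A=H/\PI_{i+1}$, so that $G/A\cong \PI_i/H$, we have a short exact sequence $1\to A\to G\to G/A\to 1$ of finite-or-abelian groups, each invariant under the induced endomorphism $\p_i$, which restricts to the induced endomorphism on $A$ and descends to the one on $G/A$. Passing to the torsion-free quotients, I claim $\eig{\bar\p_i} = \eig{(\overline{\p_i|_A})}\cup\eig{(\overline{\p_i \text{ on } G/A})}$. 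If $G$ is finite, all three sets are empty and there is nothing to prove. If $G$ is abelian, then $G/\tau(G)$ is free abelian of finite rank, $\tau(A) = A\cap\tau(G)$, and $A/\tau(A)$ embeds as a $\bar\p_i$-invariant subgroup (in fact a pure subgroup, after possibly enlarging — but purity is not needed) of $G/\tau(G)$ with quotient a finite-index subgroup of $(G/A)/\tau(G/A)$; choosing a basis of $G/\tau(G)$ adapted to this subgroup puts $\bar\p_i$ in block upper-triangular form with the two induced maps on the diagonal, so the characteristic polynomial factors and the eigenvalue sets add. (One must be mildly careful that the quotient $(G/\tau(G))/(A/\tau(A))$ is only commensurable with $(G/A)/\tau(G/A)$, not equal; but a finite-index inclusion of free abelian groups induces a map which is an isomorphism after $\otimes\Q$, hence the eigenvalues of the two induced maps coincide.)

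The main obstacle — really the only place requiring care — is precisely this last point: the interplay between the torsion subgroups and extensions. Concretely, one needs that for a $\p$-invariant subgroup $A$ of an abelian group $G$, the sequence $0\to A/(A\cap\tau G)\to G/\tau G\to G/(A+\tau G)\to 0$ is exact with free abelian terms, that the outer terms carry exactly the induced maps $\bar{\p_A}$ and $\bar{\p_{G/A}}$ up to commensurability, and that eigenvalues (over $\C$, with multiplicity irrelevant here since we only track the set) are additive across such an extension. All of this is elementary linear algebra over $\Q$ once one tensors with $\Q$, so I would phrase the argument by applying $-\otimes_\Z\Q$ at the outset to turn every free abelian factor into a finite-dimensional $\Q$-vector space, reducing the whole lemma to: the multiset of eigenvalues of an endomorphism of a finite-dimensional vector space is unchanged under refining an invariant filtration, which is immediate from block-triangularity of the matrix. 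Then one only needs to check, once, that $(-\otimes\Q)$ sends $G_i/\tau(G_i)$ to the right vector space and the induced $\bar\p_i$ to the induced linear map, which is routine.
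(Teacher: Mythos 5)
Your proposal is correct and follows essentially the same route as the paper, which only sketches the argument: show invariance under refinement, then invoke the Schreier refinement theorem (the paper cites Hall, Theorem 8.4.3) to reduce any two series to a common refinement. You in fact supply more detail than the paper does, and your device of tensoring with $\Q$ at the outset to dispose of the torsion/commensurability issues in the single-insertion step is a clean way to handle the only delicate point.
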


This lemma can be proved by first showing that the set $\bigcup_{i=1}^s\eig{\bar\p_i}$ does not change if one refines the normal series and then by showing that two different normal series have ``equivalent'' refinements (See \cite[Theorem 8.4.3]{hall76-1}). 

\medskip

Accordingly, we will write $\eig \p:=\bigcup_{i=1}^s\eig{\bar\p_i}$. 
Using this notation, $K$ satisfies the $\mathcal{NR}$-property if and only is $\eig \p$ does not contain a nontrivial root of unity for every inner automorphism $\p$ of $K.$

\begin{definition}
A compact solvmanifold is an \emph{\NR solvmanifold} if its fundamental group satisfies the \NR property.
\end{definition}

Let $f:S\to S$ be a map on an \NR solvmanifold $S$ with fundamental group $K$.  
Suppose that $f$ induces an endomorphism $f_*$ on $K$. Since $N=\iso{K}{[K,K]}$ is a fully characteristic subgroup of $K$, this endomorphism in turn induces an endomorphism $F_0$ on $K/N$ and endomorphisms $F_i$, $i=1,\ldots,c$, on the factor groups $N_i/N_{i+1}.$ 
The collection $\{F_0,\ldots, F_c\}$ is called the \emph{linearisation} of $f_*$.
Keppelmann and McCord  proved the following product formula for Nielsen numbers on \NR solvmanifolds.

\begin{theorem}[Keppelmann-McCord {\cite[Theorem~3.1]{km95-1}}]\label{prod} 
Let $f:S\to S$ be a map on an \NR solvmanifold $S$ with fundamental group $K$. 
Suppose that $f$ induces an endomorphism $f_*$ on $K$ with linearisation  $\{F_0,\ldots, F_c\}$. 
Then 
$$N(f)=\prod_{i=0}^c \,\di{F_i}.$$
\end{theorem}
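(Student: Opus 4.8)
The plan is to reduce the computation to the torus and nilmanifold pieces out of which $S$ is assembled. Write $K=\pi_1(S)$ for the \s group in question, sitting in $1\to N\to K\to\Z^k\to 1$ with $N=\iso{K}{[K,K]}$ nilpotent of class $c$, and recall the central series $1\nor N_c\nor\cdots\nor N_1=N$ with free abelian factors $N_i/N_{i+1}$. By Mostow's theorem $S$ carries a fibre bundle $\mathcal N\hookrightarrow S\to T^k$ over the torus $T^k=\R^k/\Z^k$ whose fibre $\mathcal N$ is the nilmanifold with fundamental group $N$, and $\mathcal N$ is itself an iterated principal torus bundle with successive fibres the tori $N_i/N_{i+1}$. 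The first step is to homotope $f$ so that it preserves this entire tower of bundles and so that at each level it is induced by the corresponding constituent of the linearisation: the base map $\bar f\colon T^k\to T^k$ is the linear endomorphism $F_0$, and over a fixed point of $\bar f$ lying under a coset $\bar\gamma\in K/N$ the fibre map of the nilmanifold bundle is homotopic to a self-map $\psi_{\bar\gamma}$ of $\mathcal N$ which on $N_i/N_{i+1}$ induces $F_i\circ\rho_i(\bar\gamma)$.

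The next thing I would establish is a purely algebraic fact: for every $i$ and every $\bar\gamma$ one has $\di{F_i\circ\rho_i(\bar\gamma)}=\di{F_i}$. Since $f_*$ is an endomorphism of $K$, restricting to $N_i$ and reducing modulo $N_{i+1}$ gives the compatibility relation $F_i\circ\rho_i(\bar\gamma)=\rho_i(F_0(\bar\gamma))\circ F_i$ for all $\bar\gamma\in\Z^k$. Combining this with $\de{X\circ Y}=\de{Y\circ X}$ and with the fact that $\rho_i$ is a homomorphism out of the \emph{abelian} group $\Z^k$ --- so the $\rho_i(\bar\delta)$ mutually commute --- one checks that $\de{F_i\circ\rho_i(\bar\gamma)}$ depends only on the $F_0$-twisted conjugacy class of $\bar\gamma$, and hence already has the desired value $\di{F_i}$ on the trivial twisted class. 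Promoting this to all twisted classes is precisely where the \NR hypothesis is used: because no $\rho_i(\bar k)$ has a nontrivial root of unity as an eigenvalue, the intertwining $F_i\circ\rho_i(\bar\gamma)=\rho_i(F_0(\bar\gamma))\circ F_i$ matches up the eigenspaces of $\rho_i(\bar\gamma)$ and of $\rho_i(F_0(\bar\gamma))$ in such a way that the eigenvalue scalars cancel in $\de{F_i\circ\rho_i(\bar\gamma)}$, leaving $\di{F_i}$. (Some such hypothesis is indispensable: were some $\rho_i(\bar k)$ equal to $-\mathrm{id}$, the values $\di{F_i\circ\rho_i(\bar\gamma)}$ on distinct twisted classes would generically differ.) It follows that every fibre map $\psi_{\bar\gamma}$ has Lefschetz number $\pm\prod_{i=1}^c\de{F_i}$, so by Anosov's theorem \cite{anos85-1} applied to $\mathcal N$ its Nielsen number equals $\prod_{i=1}^c\di{F_i}$, and when this is nonzero every fixed point class of $\psi_{\bar\gamma}$ is essential with a common index sign.

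These pieces are then assembled along the tower by means of the index product theorem of Fadell and Husseini for fibre-preserving maps, as in \cite{km95-1}. If some $\di{F_i}=0$, both sides of the claimed equality vanish: for $i\ge1$ the maps $\psi_{\bar\gamma}$ have Lefschetz number $0$, hence Nielsen number $0$ by Anosov, and the index product formula forces $N(f)=0$; for $i=0$ one has $N(\bar f)=\di{F_0}=0$, so $\bar f$, and therefore $f$, is homotopic to a fixed-point-free map. If every $\di{F_i}\neq0$, then $\bar f$ has exactly $\di{F_0}$ fixed point classes, all essential of index $\mathrm{sgn}(\de{F_0})$; every fixed point class of $f$ lies over one of them and restricts on the fibre to an essential fixed point class of the relevant $\psi_{\bar\gamma}$, and by the index product theorem the index of that class of $f$ is the nonzero product of the base index and the fibre index. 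Hence every fixed point class of $f$ is essential, so $N(f)=R(f)$ equals the number of fixed point classes of $f$, which by the above is $\sum_{[\bar\gamma]\in\RR{F_0}}R(\psi_{\bar\gamma})=\di{F_0}\cdot\prod_{i=1}^c\di{F_i}=\prod_{i=0}^c\di{F_i}$. (Formally one inducts on the height $c$ of the tower, the base case being the torus, where $N(f)=|L(f)|=\di{F_0}$.)

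The step I expect to be the real obstacle is the one flagged in the second paragraph, together with its topological consequence: proving that the \NR eigenvalue condition forces $\di{F_i\circ\rho_i(\bar\gamma)}=\di{F_i}$ on \emph{every} twisted class --- equivalently that all the fibre maps $\psi_{\bar\gamma}$ have the same Nielsen number --- and then, via the index product theorem and Anosov's theorem on the nilmanifold fibres, that no nonempty fixed point class of $f$ (or of an intermediate bundle map in the tower) can be inessential and that the fibrewise index contributions multiply rather than partially cancel. Making this eigenvalue cancellation rigorous in full generality (coincident eigenvalues, non-semisimple $\rho_i(\bar k)$, and so on) is the technical heart of Keppelmann and McCord's proof; once it is in place, everything else is bookkeeping along the tower.
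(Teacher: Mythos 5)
The paper gives no proof of this statement: it is imported verbatim from Keppelmann and McCord, so there is nothing internal to compare against except the citation. Your sketch is, however, a faithful outline of the argument actually given in \cite{km95-1}: the Mostow fibration of $S$ over $T^k$ with nilmanifold fibre, fibre-space Nielsen theory to assemble $N(f)$ from the base map $F_0$ and the fibre maps $\psi_{\bar\gamma}$, Anosov's theorem on the nilmanifold fibres, and the determinant identity $\di{\rho_i(\bar\gamma)F_i}=\di{F_i}$ as the point where the \NR hypothesis enters. Two caveats. First, the step you flag as the real obstacle --- that the intertwining $F_i\circ\rho_i(\bar\gamma)=\rho_i(F_0(\bar\gamma))\circ F_i$ plus the absence of nontrivial roots of unity among the eigenvalues of the $\rho_i(\bar k)$ forces $\di{F_i\circ\rho_i(\bar\gamma)}=\di{F_i}$ whenever $\de{F_0}\neq 0$ --- is asserted, not proved; it is precisely \cite[Theorem~4.2]{km95-1}, and the present paper proves a variant of it (for $A:\Z^m\to\operatorname{SL}_n(\Z)$ with net values) as Lemma~\ref{matrixanalysis} in the Appendix, via the Jordan decomposition, a polynomial-identity argument for the unipotent part, and a cycle-by-cycle eigenvalue analysis for the semisimple part. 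Your heuristic (nonzero entries of $F_i$ in a common eigenbasis link eigenvalues of $\rho_i(\bar\gamma)$ into cycles whose product is forced to be a root of unity, hence $1$) is the right picture, but as you concede it is where essentially all the work lies. Second, the assembly along the tower is not free either: one must verify that the iterated fibration satisfies the hypotheses under which the addition/product formula for Nielsen numbers of fibre-preserving maps is valid (classes do not fuse, essentiality is detected fibrewise); this occupies a substantial part of \cite{km95-1} and is the same kind of condition that reappears as the hypothesis of \cite[Theorem~3.1]{kll05-1} in the proof of Theorem~\ref{avefor}. Modulo these two black boxes, your reductions, the treatment of the degenerate cases, and the final count $N(f)=\di{F_0}\prod_{i=1}^c\di{F_i}$ are correct and match the original route.
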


\begin{remark}\label{nr}
As mentioned above,
the induced endomorphism $f_*$ is not unique: $f$ also induces $\tau_k\circ f_*$ for every inner automorphism $\tau_k:K\to K:x\mapsto kx\ii k$. Let $\rho_0:K/N\to \Aut{K/N}$ denote the trivial map. Then $\tau_k\circ f_*$ has linearisation $\{\rho_0(\bar k)F_0,\dots, \rho_c(\bar k)F_c\}$.
In particular, $\prod_{i=0}^c \,\di{\rho_i(\bar k)F_i}$ is independent of $k$.
\end{remark}

\begin{remark} In Remark~\ref{paper of lee} we already mentioned that the  manifolds in \cite{leejb} are solvmanifolds (for the general definition) and they are in fact 
\NR solvmanifolds. This implies that the Nielsen numbers that were computed in \cite{leejb} using fibering techniques could also be obtained by applying the product formula of Keppelmann and McCord from Theorem~\ref{prod} above.
\end{remark}

\subsection{Finding a suitable cover}\label{netex}

Let $M=\tilde M/\PI$ be an infra-solvmanifold.
In this subsection, we show
that we can always find a fully characteristic, finite index subgroup $K$ of $\PI$ that satisfies the $\mathcal{NR}$-property. 
The resulting quotient space $S:=\tilde{M}/K$ will then 
be an \NR solvmanifold such that every map on $M$ lifts to $S$.

It is well known that $\PI$ admits a fully invariant \s group $\G$ of finite index. 
As before, set $N:=\iso{\G}{[\G, \G]}$, $N_i:=\iso{N}{\gamma_i(N)}$ and consider the actions $\rho_i:\G/N\to \Aut{N_i/N_{i+1}}$ by conjugation. 

The following lemma was proved by Wilking \cite[Lemma~7.5]{wilk00-1} in a more general form.
For the convenience of the reader we adopt his argument to our needs.

\begin{lemma}[Wilking]\label{n}
There exists $n\in \N\setminus \{0\}$ such that for every $\bar\gamma\in\Gamma/N$, the subgroup of $\C^*$ generated by  $\bigcup_{i=1}^{c}\eig{\rho_i(\bar \gamma^n)}$ does not contain a nontrivial root of unity. 
\end{lemma}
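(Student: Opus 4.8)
The plan is to replace the statement, which a priori ranges over all (infinitely many) $\bar\gamma\in\G/N$, by a statement about a single finitely generated subgroup of $\C^*$. Since $\G$ is a \s group and $N=\iso{\G}{[\G,\G]}$, the quotient $\G/N$ is free abelian of finite rank $k$; fix generators $t_1,\dots,t_k$. For each $i\in\{1,\dots,c\}$ the factor $N_i/N_{i+1}$ is free abelian of some finite rank $m_i$, so $\rho_i(\G/N)\subseteq\Aut{N_i/N_{i+1}}\cong\GLZ{m_i}$, and the matrices $\rho_i(t_1),\dots,\rho_i(t_k)$ pairwise commute because $\G/N$ is abelian. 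Being automorphisms, they are invertible, so their eigenvalues lie in $\C^*$.

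The key step is a finiteness observation. Commuting complex matrices can be simultaneously put in upper triangular form, so for each fixed $i$ there is a basis of $\C^{m_i}$ in which all $\rho_i(t_j)$ are simultaneously upper triangular, with the eigenvalues of $\rho_i(t_j)$ on the diagonal. Then for $\bar\gamma=t_1^{a_1}\cdots t_k^{a_k}$ the matrix $\rho_i(\bar\gamma)=\rho_i(t_1)^{a_1}\cdots\rho_i(t_k)^{a_k}$ is again upper triangular, and its diagonal entries, hence its eigenvalues, are products of integer powers of the diagonal entries of the $\rho_i(t_j)$. Therefore every element of $\eig{\rho_i(\bar\gamma)}$, for every $i\in\{1,\dots,c\}$ and every $\bar\gamma\in\G/N$, lies in the subgroup $M\leq\C^*$ generated by the finitely many eigenvalues of the matrices $\rho_i(t_j)$, $1\le i\le c$, $1\le j\le k$.

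It remains to analyse $M$ and to pick $n$. As a finitely generated subgroup of $\C^*$, $M$ is a finitely generated abelian group, so its torsion subgroup $M_{\mathrm{tors}}$ is finite; being a finite subgroup of $\C^*$, it is cyclic, of some order $n\in\N\setminus\{0\}$. I claim this $n$ works. The set $M^n:=\{m^n\mid m\in M\}$ is a subgroup of $\C^*$ because $M$ is abelian, and it is torsion free: if $m^n$ has finite order then so does $m$, hence $m\in M_{\mathrm{tors}}$, and then $m^n=1$ since $M_{\mathrm{tors}}$ has exponent $n$. Finally, for every $\bar\gamma\in\G/N$ and every $i$ we have $\eig{\rho_i(\bar\gamma^n)}=\eig{\rho_i(\bar\gamma)^n}=\{\lambda^n\mid \lambda\in\eig{\rho_i(\bar\gamma)}\}\subseteq M^n$, so $\bigcup_{i=1}^c\eig{\rho_i(\bar\gamma^n)}$ generates a subgroup of the torsion free group $M^n$, which in particular contains no nontrivial root of unity. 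The only real content is the reduction of the infinite family $\{\rho_i(\bar\gamma)\}_{\bar\gamma}$ to the fixed finitely generated group $M$ via simultaneous triangularisation; everything afterwards is elementary abelian group theory, with the choice $n=\lvert M_{\mathrm{tors}}\rvert$ designed precisely to eliminate the only obstruction to torsion freeness.
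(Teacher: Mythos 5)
Your proof is correct and is essentially the paper's argument: both reduce all eigenvalues of all the $\rho_i(\bar\gamma)$ to a fixed finitely generated multiplicative subgroup of $\C^*$ by exploiting that the commuting matrices $\rho_i(t_j)$ can be simultaneously triangularised, and then choose $n$ so that taking $n$-th powers lands in a torsion-free group. The only (cosmetic) difference is where the finiteness of the obstructing torsion comes from: you use that $M$ is a finitely generated abelian subgroup of $\C^*$, hence has finite cyclic torsion, whereas the paper takes $n$ to be the number of roots of unity in the number field $\Q(V)$ generated by the eigenvalues of the $\rho_i(t_j)$.
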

\begin{proof}
Take a set of generators $\{z_1, \ldots, z_k\}$ of $\G/N\cong\Z^k$, and consider the set 
$$V:=\{\alpha\in\C\mid \exists\, (i,j)\in\{1,\ldots, c\}\times\{1,\ldots k\}:\alpha\text{ is an eigenvalue of }\rho_i(z_j)\}$$ 
of all the eigenvalues of the $\rho_i(z_j)'s$. 
As $V$ is a finite set of algebraic integers,  the field extension $\Q\subseteq\Q(V)$ is finite.
Hence $\Q(V)$ contains only finitely many roots of unity. Let $n$ denote the number of roots of unity in $\Q(V)$. 
We will show that $n$ satisfies the condition of the lemma. 

Thereto, take $\bar\gamma$ in $\G/N$, 
and let $\mu\in \langle\,\bigcup_{i=1}^{c}\eig{\rho_i(\bar \gamma^n)}\,\rangle_{\C^*}$ be a root of unity, say $\mu^r=1$ for some $r\neq 0$ in $\N$. We have to show that $\mu=1$.
As $\mu\in \langle\,\bigcup_{i=1}^{c}\eig{\rho_i(\bar \gamma^n)}\,\rangle_{\C^*}$, we can write $\mu$ as $\mu=\mu_1^{n_1}\ldots\mu_s^{n_s}$ with $n_j\in\Z$ and $\mu_j$ an eigenvalue of $\rho_{e_j}(\bar \gamma^n)$ for some $e_j\in\{1,\dots, c\}$.
Write each $\mu_j$ as $\mu_j=\lambda_j^n$ with $\lambda_j$ an eigenvalue of $\rho_{e_j}(\bar \gamma)$. 
Then
$$\mu=\left(\lambda_1^{n_1}\ldots\lambda_s^{n_s}\right)^n.$$
Note that the group $H:=\{z^n\mid z\in\Q(V)^*\}$ is torsion free, since the roots of unity in $\Q(V)$ form a subgroup of $\Q(V)^*$ of order~$n$. 
Therefore, the lemma will follow once we show that $\mu\in H$. This follows immediately from the fact that each $\lambda_j\in \Q(V)$.
Indeed, as $z_1, \ldots, z_k$ generate $\G/N$, we can write
$\bar{\gamma}= z_1^{m_1}z_2^{m_2}\ldots z_k^{m_k}$ for some $m_1,m_2,\ldots, m_k\in \Z$ and so 
$\rho_{e_j}(\bar\gamma)=\rho_{e_j}(z_1)^{m_1}\ldots\rho_{e_j}(z_k)^{m_k}$. 
Since $\rho_{e_j}(z_1), \ldots, \rho_{e_j}(z_k)$ commute, $\lambda_j=\alpha_1^{m_1}\ldots\alpha_k^{m_k}$ with $\alpha_l$ some eigenvalue of $\rho_{e_j}(z_l)$. Hence $\lambda_j\in\Q(V)$.

\end{proof}

\begin{theorem}
Let $M$ be an infra-solvmanifold. There exists an \NR solv\-ma\-ni\-fold $S$ that finitely covers $M$ and such that any \sf $f$ of $M$ lifts to a \sf $\tilde{f}$ of $S$.
\end{theorem}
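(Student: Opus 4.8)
The plan is to realise $S$ as a quotient $\tilde M/K$ for a carefully chosen finite-index subgroup $K$ of $\PI$, built from the fully invariant \s group $\G\leq\PI$ of finite index together with the integer $n$ produced by Lemma~\ref{n}. Writing $N:=\iso{\G}{[\G,\G]}$, so that $\G/N\cong\Z^k$, let $H$ be the subgroup $\{\bar\gamma^{\,n}\mid\bar\gamma\in\G/N\}$ of $n$th powers of $\G/N$, and let $K$ be the preimage of $H$ under the projection $\G\to\G/N$. Thus $N\leq K\leq\G$ with $K/N=H$, and I put $S:=\tilde M/K$.

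First I would record the elementary properties of $K$. Since $[\G,\G]$ and hence its isolator $N$ are fully invariant in $\G$, and $H$ is fully invariant in the abelian group $\G/N$, the subgroup $K$ is fully invariant in $\G$; as $\G$ is fully invariant in $\PI$, it follows that $K$ is fully invariant in $\PI$. Moreover $[\G/N:H]=n^k$, so $[\PI:K]=n^k[\PI:\G]<\infty$. Finally $K$ is a \s group, as it fits into $1\to N\to K\to K/N\to 1$ with $N$ a \fingertof and $K/N\cong\Z^k$ free abelian. Consequently $S=\tilde M/K$ is a closed manifold that finitely covers $M$ and whose fundamental group $K$ is a \s group; being a finite cover of the infra-solvmanifold $M$ it is itself an infra-solvmanifold, and by Wang's theorem together with the diffeomorphism rigidity recalled in Section~\ref{is} it is in fact a compact solvmanifold.

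The core of the proof is to verify that $K$ has the \NR property, that is (by the characterisation preceding the definition of an \NR solvmanifold) that $\eig\varphi$ contains no nontrivial root of unity for every inner automorphism $\varphi=\tau_x$ of $K$, with $x\in K$. Since each $N_i$ is characteristic in $N$ and $N$ is normal in $K$, the chain $1\nor N_c\nor\cdots\nor N_1=N\nor K$ is a normal series of $K$ with free abelian factors, preserved by $\tau_x$, and by the series-independence lemma I may use it to compute $\eig{\tau_x}$. On the factor $K/N$ the inner automorphism $\tau_x$ acts trivially, contributing at most the eigenvalue $1$. On $N_i/N_{i+1}$ it acts by conjugation as $\rho_i(\bar x)$, where $\bar x=xN\in K/N=H$; writing $\bar x=\bar\gamma^{\,n}$ with $\bar\gamma\in\G/N$ yields $\rho_i(\bar x)=\rho_i(\bar\gamma^{\,n})$. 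By Lemma~\ref{n} the subgroup of $\C^*$ generated by $\bigcup_{i=1}^c\eig{\rho_i(\bar\gamma^{\,n})}$ contains no nontrivial root of unity, and hence neither does that set itself. Therefore $\eig{\tau_x}$ contains no nontrivial root of unity, so $K$ has the \NR property and $S$ is an \NR solvmanifold.

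It remains to see that every \sf $f$ of $M$ lifts to $S$. Fix a lift $\tilde f_0$ of $f$ to $\tilde M$ and let $f_\ast:\PI\to\PI$ be the induced morphism, characterised by $f_\ast(\al)\tilde f_0=\tilde f_0\al$ for all $\al\in\PI$. Full invariance of $K$ gives $f_\ast(K)\subseteq K$, so $\tilde f_0 k=f_\ast(k)\tilde f_0$ with $f_\ast(k)\in K$ for every $k\in K$; hence $\tilde f_0$ carries $K$-orbits to $K$-orbits and descends to a \sf $\tilde f$ of $S=\tilde M/K$, which is a lift of $f$ along the covering $S\to M$. The only step with genuine content is the \NR verification, and there the sole non-formal input is Wilking's Lemma~\ref{n}, which is already available; the remaining ingredients — identifying $K/N$ with the subgroup of $n$th powers in $\G/N$ so that conjugation on the factors $N_i/N_{i+1}$ is realised by $\rho_i$ of an $n$th power, and invoking series-independence — are bookkeeping.
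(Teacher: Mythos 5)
Your proposal is correct and follows exactly the paper's route: you take the same subgroup $K=p^{-1}(\{\bar\gamma^{\,n}\mid\bar\gamma\in\G/N\})$ with $n$ from Wilking's Lemma~\ref{n} and set $S=\tilde M/K$; the paper simply declares the full invariance, finite index, \NR property and lifting to be "easy to see," whereas you supply those verifications (correctly, including the use of the series-independence of $\eig{\cdot}$ to compute with the series $1\nor N_c\nor\cdots\nor N\nor K$ rather than the one built from $\iso{K}{[K,K]}$).
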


\begin{proof}
Let $M=\tilde{M}/\PI$ be as before, take
$n$ as in Lemma~\ref{n} and let $p:\G\to\G/N$ be the quotient map. 
Consider the subgroup $K:=p^{-1}(\{\bar\gamma^n\mid\bar\gamma\in\G/N\})$ of $\Gamma$.
It is easy to see that $K$ is a fully invariant, finite index subgroup of $\PI$ that satisfies the \NR property. 
As $K$ is fully invariant, any \sf $f$ of $M$ lifts to $S$. 
Therefore $S=\tilde{M}/K$ is the desired \NR solvmanifold. 
\end{proof}

\begin{remark}
By construction, the fundamental group $K$ of $S$ satisfies an even stronger notion called \emph{netness}, see Section~\ref{ni}.  
\end{remark}

\begin{remark}\label{cov-transfo}
Let $M=\tilde{M}/\PI$ and $S=\tilde{M}/K$ be as in the proof above, then 
$S$ is a finite regular cover of $M$ with group of covering transformations $\PI/K$.
\end{remark}

\subsection{Averaging formula over an invariant subgroup.}\label{i}

In \cite{kll05-1}, S.W.\ Kim, J.B.\ Lee and K.B.\ Lee proved the averaging formula for Nielsen numbers on \infra manifolds of type (R). 
We can generalise their averaging formula to any \infra manifold using the main result of \cite{kll05-1}, namely \cite[Theorem~3.1]{kll05-1}.

\begin{theorem}[Averaging formula] \label{avefor}
Let $f$ be a \sf on an infra-solvmanifold $M=\tilde{M}/\Pi$, and let $K$ be a fully invariant, finite index \NR subgroup of $\PI$, so that $S=\tilde{M}/K$ is an \NR solvmanifold covering $M$.
Let $\bar f$ be a lift of $f$ to $S$. Then
$$N(f)=\dfrac{1}{[\PI:K]}\sum_{\bar \alpha\in\PI/K} N(\bar\alpha\bar f).$$
In the formula above, every element $\bar\alpha \in \PI/K$ is acting on $S$ as a covering transformation (see Remark~\ref{cov-transfo}).
\end{theorem}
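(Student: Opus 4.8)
The plan is to reduce the claimed averaging formula to the abstract averaging result \cite[Theorem~3.1]{kll05-1}, which says (roughly) that if $\Pi$ is a group acting on a suitable space, $K \trianglelefteq_f \Pi$, $\bar f$ is a lift of $f$ over the intermediate cover $S$, and a certain ``no coincidence'' or injectivity hypothesis on the induced endomorphism holds, then $N(f) = \frac{1}{[\Pi:K]}\sum_{\bar\alpha \in \Pi/K} N(\bar\alpha \bar f)$. So the first step is to recall precisely what hypotheses \cite[Theorem~3.1]{kll05-1} demands and to set up the dictionary: $M = \tilde M/\Pi$, $S = \tilde M/K$ with $K$ the fully invariant finite index \NR subgroup produced in Theorem~\ref{avefor}'s hypotheses, and $\Pi/K$ acting on $S$ as deck transformations (Remark~\ref{cov-transfo}). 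Since $f_*(K) \subseteq K$ by full invariance, $f_*$ descends to $\bar f_* : K \to K$ and to $\Pi/K \to \Pi/K$, which is what makes all the lifts $\bar\alpha \bar f$ well-defined self-maps of $S$.

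The second step is to verify the structural hypothesis needed to invoke \cite[Theorem~3.1]{kll05-1}. In the type $(R)$ setting of \cite{kll05-1} the key point is that the relevant Nielsen numbers are ``Jiang-like'' or satisfy a product/commutativity property along the tower of covers; here the analogous input is precisely the Keppelmann–McCord product formula (Theorem~\ref{prod}), which holds because $S$ is an \NR solvmanifold, together with Remark~\ref{nr}, which controls how the linearisation changes under the deck action $\bar\alpha$. Concretely, each $N(\bar\alpha\bar f) = \prod_{i=0}^{c} \di{\rho_i(\bar\alpha)F_i}$ where $\{F_0,\dots,F_c\}$ is the linearisation of $\bar f_*$; this explicit form is what lets the abstract machinery of \cite{kll05-1} apply verbatim, since that paper's argument only used such a product expression (available for type $(R)$ via an earlier formula) and not any feature special to type $(R)$. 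I would state this as the substitution that upgrades \cite[Theorem~3.1]{kll05-1} from the type $(R)$ case to the general \infra case.

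The third step is bookkeeping on the index set: one checks that the lifts of $f$ to $S$ are exactly $\{\bar\alpha\bar f \mid \bar\alpha \in \Pi/K\}$, that changing the base lift $\bar f$ only permutes this set (and changes each $\bar f_*$ by an inner automorphism, under which $\prod_i \di{\rho_i(\bar\alpha)F_i}$ is invariant by Remark~\ref{nr}), so the right-hand side is well-defined, and that the correspondence between lifting classes of $f$ and of the $\bar\alpha\bar f$ matches the orbit counting in \cite[Theorem~3.1]{kll05-1}. Then one simply quotes that theorem.

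The main obstacle I expect is the second step: making sure that the hypotheses of \cite[Theorem~3.1]{kll05-1} are genuinely met in the general \infra setting rather than just in type $(R)$. The danger is that the proof in \cite{kll05-1} secretly uses that solvmanifolds of type $(R)$ carry a Lie-group model in which an averaging/fibering argument runs cleanly (e.g.\ a Mostow-type fibration with nilmanifold fibres on which Anosov's theorem applies fibrewise). One must check that the only solvmanifold-specific fact used is the existence of a product formula for $N$ on the intermediate cover, which in the general case is supplied by Keppelmann–McCord (Theorem~\ref{prod}) precisely because $K$ was chosen \NR; if \cite[Theorem~3.1]{kll05-1} uses anything more (such as a specific geometric realisation of the covering map, or a Jiang-space hypothesis on $S$), one would have to either re-derive that abstractly or restrict to the polynomial-diffeomorphism model of Section~\ref{polymani}, where $\tilde M = \R^h$ and the deck actions are by polynomial diffeomorphisms, so that the covering $S \to M$ is modelled exactly as in the cited work. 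I would therefore open the proof by isolating the precise statement of \cite[Theorem~3.1]{kll05-1}, observe that it is stated for an abstract regular finite cover with the product-formula input, and then feed in Theorem~\ref{prod} and Remark~\ref{nr} to conclude.
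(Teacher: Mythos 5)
Your top-level strategy coincides with the paper's: both consist of quoting \cite[Theorem~3.1]{kll05-1} and then verifying its hypothesis, with the Keppelmann--McCord product formula (Theorem~\ref{prod}) as the decisive input made available by the \NR property of $K$. The gap is in what you propose to verify. The hypothesis of \cite[Theorem~3.1]{kll05-1} is not a product-formula or ``Jiang-like'' structural property of the intermediate cover; it is the condition that for every lift $\tilde g$ of $f$ to the universal cover, inducing the endomorphism $\varphi$ on $\Pi$, if the fixed point class $p(\mathrm{Fix}(\tilde g))$ is essential then $\mathrm{fix}(\varphi)\subseteq K$. Your second step, as written, would feed Theorem~\ref{prod} into the averaging machinery as though it were an axiom of that machinery, and this does not produce a proof: the product formula is instead the tool used to establish the fixed-subgroup condition. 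The missing chain of deductions is: essentiality of the class on $M$ passes to essentiality of the corresponding class of the lift $\bar g$ on $S$ (\cite[Remark~2.7]{kll05-1}), hence $N(\bar g)\neq 0$; by Theorem~\ref{prod}, $\prod_{i}\di{F_i}\neq 0$, so $\det(I-F_i)\neq 0$ and therefore $\mathrm{fix}(F_i)=1$ for each piece of the linearisation of $\varphi|_K$; this forces $\mathrm{fix}(\varphi|_K)=1$, and since $K$ has finite index in the torsion-free group $\Pi$, also $\mathrm{fix}(\varphi)=1\subseteq K$. Without this argument the theorem is not proved.

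A smaller point: your worry that \cite[Theorem~3.1]{kll05-1} might secretly depend on a type-$(R)$ geometric model is reasonable, but the resolution is not the one you sketch. That theorem is a purely covering-space statement about a finite regular cover subject to the fixed-subgroup hypothesis above; no Lie-theoretic realisation and no product formula enter its statement, so no ``upgrade'' of the cited theorem is needed --- only the verification of its hypothesis, which is exactly where Theorem~\ref{prod} and the \NR property of $K$ are consumed. Also, Remark~\ref{nr} plays no role in this theorem; it only becomes relevant for the explicit formula of Corollary~\ref{formc}.
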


\begin{proof}
Let $p':\tilde{M}\to S=\tilde{M}/K$ and $p:\tilde{M}\to M= \tilde{M}/\PI$ denote the universal covering projections. 

Let $\tilde{g}:\tilde{M} \to \tilde{M}$ be any lift of $f$ and assume that $\tilde{g}$ induces the endomorphism $\p:\PI\to\PI$ (so 
$\forall \gamma \in \PI: \; \p(\gamma) \tilde{g} = \tilde{g} \gamma$, see page~\pageref{induced-endo}). 
According to \cite[Theorem~3.1]{kll05-1}, we only need to check the following condition:
if $p(\mathrm{Fix}(\tilde g))$ is an essential fixed point class of $f$, then $\mathrm{fix}(\p)\subseteq K$.

 So, let $\tilde g$ be a lifting of $f$,  let $\p$ be the corresponding morphism on $\PI$, and suppose that $p(\mathrm{Fix}(\tilde g))$ is an essential fixed point class of $f$. 
 Let $\bar g$ denote the induced lift on $S$, so we have the commutative diagram
$$ \begin{CD}
  \tilde{M} @>\tilde g>>\tilde{M} \\
  @VV p' V @VV p' V\\
  S @>\bar g>> S \\
    @VV V @VV V\\
  M @>f>> M
  \end{CD}$$
  Note that $\bar g$ induces the endomorphism $\p':=\p|_{K}$ on $K$ with respect to $\tilde g$.  
  We have to show that $\mathrm{fix}(\p)\subseteq K$. 
  We will show in fact that $\mathrm{fix}(\p)$ is trivial. 
  As $K\f \PI$ and $\PI$ is torsion free, it  suffices to show that $\mathrm{fix}(\p')$ is trivial.
  
As $p(\mathrm{Fix}(\tilde g))$ is an essential fixed point class of $f$, also $p'(\mathrm{Fix}(\tilde g))$ is an essential fixed point class of $\bar g$ (see \cite[Remark 2.7]{kll05-1}). 
In particular $N(\bar  g)\neq 0$. Let $\{F_0, \ldots ,F_c\}$ denote the linearisation of $\p'$. As 
$$0\neq N(\bar g)=\prod_{i=0}^c \,\di{F_i}$$ by Theorem~\ref{prod}, $\det(I-F_i)\neq 0$ for all $i\in\{1,\ldots c\}.$ Hence $\mathrm{fix}(F_i)=1$ for all $i$, implying $\mathrm{fix}(\p')=1$ as well.   
\end{proof}

Using Theorem~\ref{prod}, we can make the formula in Theorem~\ref{avefor} more explicit. 
To this end and for further reference, we first introduce the following terminology.

\begin{definition}
Let $\PI$ be a \tf \vp group. A \emph{\tf filtration of $\PI$} is a series of normal subgroups
$$\PI_*: \,1=\PI_{c+1}\nor \PI_c\nor \cdots\nor \PI_0\nor \PI$$
with $\PI/\PI_0$ finite and $\PI_i/\PI_{i+1}$ \tf abelian for all $i\in\{0,\dots, c\}$. See also \cite{dil94-1, di96-1}.
This series induces actions $\lambda_i:\PI\to\Aut{\PI_i/\PI_{i+1}}$ by conjugation. We call the collection $\{\PI_i/\PI_{i+1}, \lambda_i\}_{i=0,\dots, c}$ the \emph{linearisation of $\PI$ induced by the filtration $\PI_*$.}

Let $K\nor_f \PI$ be a \s group. Set $N:=\iso{K}{[K,K]}$ and $N_i:=\iso{N}{\gamma_i(N)}$, $i=1,\dots, s$, with $s$ the nilpotency class of $N$. We refer to the \tf filtration
$$1\nor N_s\nor\cdots\nor N_1\nor K \nor \PI$$
as the \emph{filtration corresponding to $K\nor_f\PI$.} The induced linearisation is called the \emph{linearisation corresponding to $K\nor_f\PI$}. 

Finally, let $\p:\PI\to\PI$ be an endomorphism such that $\p(K)\subseteq K$. Denote by $\p'$ the induced endomorphism on $K$. 
We call the linearisation of $\p'$ (as defined above Theorem~\ref{prod}) also  the \emph{linearisation of $\p$ with respect to $K$}.
\end{definition}
 
\begin{cor}\label{formc}
Let $f$ be a \sf on an infra-solvmanifold $M=\tilde{M}/\PI$, and let $K$ be a fully invariant, finite index \NR subgroup of $\PI$.
Let $\{\Lambda_i, A_i\}_{i=0,\dots, c}$ be the linearisation corresponding to $K\nor_f\PI$, 
and suppose that $f$ induces the endomorphism $\p$ on $\PI$ with linearisation $\{F_0,\dots, F_c\}$ with respect to $K$. Then
  $$ N(f)=\dfrac{1}{[\PI:K]}\sum_{\bar\alpha\in\PI/K}\prod_{i=0}^c \,\di{A_i(\alpha)F_i}.$$
\end{cor}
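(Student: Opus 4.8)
The plan is to feed the Keppelmann--McCord product formula (Theorem~\ref{prod}) into the averaging formula (Theorem~\ref{avefor}); the only genuine task is to identify, for each coset $\bar\alpha\in\PI/K$, the linearisation of the endomorphism that the lift $\bar\alpha\bar f$ induces on $K$.

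First I would fix the set-up: let $p':\tilde M\to S$ be the universal covering projection, let $\tilde f_0$ be the reference lift of $f$ defining $\p$, and let $\bar f:S\to S$ be the map to which $\tilde f_0$ descends (so $\bar f$ is a lift of $f$ to $S$). Since $\{\bar\alpha\bar f\mid\bar\alpha\in\PI/K\}$ is precisely the set of all lifts of $f$ to $S$, the sum in Theorem~\ref{avefor} is unaffected by this particular choice of $\bar f$. Now fix $\bar\alpha\in\PI/K$ together with a representative $\alpha\in\PI$. Because $K\nor\PI$ and $\p(K)\subseteq K$, one has $(\alpha\tilde f_0)\,k=\alpha\,\p(k)\,\tilde f_0=(\alpha\p(k)\ii\alpha)(\alpha\tilde f_0)$ for every $k\in K$, with $\alpha\p(k)\ii\alpha\in K$; hence $\alpha\tilde f_0$ descends to a self-map of $S$, and from $p'\circ\alpha=\bar\alpha\circ p'$ (which holds because the action of $\PI$ on $\tilde M$ descends to the action of $\PI/K$ on $S$, as $K\nor\PI$) this descent is seen to be $\bar\alpha\bar f$. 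Reading off the induced endomorphism with $\alpha\tilde f_0$ as reference lift, $\bar\alpha\bar f$ induces $\psi:=\tau_\alpha\circ\p|_K$ on $K$, where $\tau_\alpha(x)=\alpha x\ii\alpha$. I expect this bookkeeping -- keeping the three projections $\tilde M\to S\to M$ and the two deck groups $K$ and $\PI/K$ consistently straight -- to be the part most prone to slips.

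Next I would compute the linearisation of $\psi$ along the filtration corresponding to $K\nor_f\PI$. The subgroups $N=\iso{K}{[K,K]}$ and $N_i=\iso{N}{\gamma_i(N)}$ are characteristic in $K$, hence normal in $\PI$ and preserved by $\p$; consequently $\p|_K$ and $\tau_\alpha|_K$ both respect the filtration $1\nor N_c\nor\cdots\nor N_1\nor K$. On the graded piece $\Lm i$ the map $\p|_K$ induces $F_i$, by the definition of the linearisation of $\p$ with respect to $K$, while $\tau_\alpha|_K$ induces $A_i(\alpha)$, by the definition of the conjugation action $A_i:\PI\to\Aut{\Lm i}$. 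As passing to graded pieces is functorial in composition, $\psi$ induces $A_i(\alpha)F_i$ on $\Lm i$; that is, the linearisation of $\psi$ is $\{A_0(\alpha)F_0,\dots,A_c(\alpha)F_c\}$, and Theorem~\ref{prod} gives $N(\bar\alpha\bar f)=\prod_{i=0}^c\di{A_i(\alpha)F_i}$.

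Substituting into Theorem~\ref{avefor} yields the asserted formula. As a last point I would note that $\prod_{i=0}^c\di{A_i(\alpha)F_i}$ depends only on the coset $\bar\alpha$, not on the chosen representative $\alpha$ or on the reference lift, since it equals the intrinsically defined number $N(\bar\alpha\bar f)$; this both justifies the right-hand side of the corollary and is the $\PI$-level counterpart of the invariance observed in Remark~\ref{nr}.
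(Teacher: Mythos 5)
Your proposal is correct and follows essentially the same route as the paper: identify the lift $\alpha\tilde f_0$ of $\bar\alpha\bar f$, observe that it induces $\tau_\alpha\circ\p|_K$ on $K$ via $\alpha\tilde f_0\gamma=\tau_\alpha(\p(\gamma))\alpha\tilde f_0$, read off the linearisation $\{A_i(\alpha)F_i\}$, and feed Theorem~\ref{prod} into Theorem~\ref{avefor}. Your closing remark on independence of the coset representative is the content of Remark~\ref{nr} and is a worthwhile sanity check, though the paper leaves it implicit here.
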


\begin{proof}
As $f$ induces the morphism $\p$, there exists a lift $\tilde{f}_0:\tilde{M} \to \tilde{M}$ of $f$ such that 
$\p(\gamma) \tilde{f}_0 = \tilde{f}_0 \gamma$ for all $\gamma\in \PI$. 
Let $\bar f$ be the induced lift on $S$.
Let $\bar{\alpha} \in \PI/K$ (so $\alpha \in \PI$). Then $\alpha \tilde{f}_0$ is a lift of $\bar{\alpha} \bar{f}$. It is easy to see that $\alpha \tilde{f}_0 \gamma = \tau_\alpha(\p(\gamma)) \alpha \tilde{f}_0$
for all $\gamma\in \PI$, where
$\tau_\alpha:K\to K:x\mapsto \alpha x\alpha^{-1}$. So $\bar{\alpha} \bar{f}$ induces the endomorphism $\tau_\alpha \circ \p'$ on $K$. 
The automorphism $\tau_\alpha$ induces the automorphism $A_i(\alpha)$ on $\Lambda_i$. 
Hence $\{A_0(\alpha)F_0,\ldots,A_c(\alpha)F_c\}$ is the linearisation of $\tau_\alpha\circ\p'$. 
The result thus follows immediately from Theorem~\ref{prod}.
\end{proof}

\begin{example}\label{big example}
Let $\Pi=\Z^5 \rtimes \Z$ where the generator of the $\Z$-factor is acting on $\Z^5$ via the matrix 
\[ A= \begin{pmatrix}
-1 & 0 & 0 & 0 & 0 \\
 0 & 0 & 1 & 0 & 0 \\
 0 & 0 & 0 & 1 & 0 \\
 0 & 0 & 0 & 0 & 1 \\
 0 &-1 & 1 & 1 & 1
\end{pmatrix}.
\]
This means that elements of $\Pi$ can be seen as tuples $(\vec{z}_2, z_1)$ where $\vec{z}_2\in \Z^5$ (a column vector) and $z_1\in \Z$ and where the product is given by 
\[ (\vec{z}_2, z_1) \cdot ( \vec{z}_2\sss{}' , z_1' ) = ( \vec{z}_2 + A^{z_1} \vec{z}_2\sss{}' , z_1 + z_1').\]
The $4\times 4$ block in the right bottom corner of $A$ was already used in \cite{dlr} and so we know that the eigenvalues of $A$ are 
\[ r_1, r_2 = 
\frac{1 + \sqrt{13} \pm \sqrt{2\sqrt{13} -2}}{4},\; r_3,r_4= \frac{
1 - \sqrt{13} \pm i \sqrt{2\sqrt{13} + 2}}{4},\; r_5= -1,\]
where $|r_3|=|r_4|=1$, but $r_3$ and $r_4$ are not roots of unity. From this it is easily seen that $A^{2z}$ does not have any non-trivial roots of unity as eigenvalues and that the group $K= \Z^5 \rtimes (2\Z)$ is a fully invariant \NR subgroup of $\Pi$, while $\Pi$ itself is not an \NR group. The  linearisation $\{ \Lambda_i, A_i\}_{i=0,1}$ corresponding to $K$ is based on the torsion free filtration
\[ \Pi_2=1 \nor \Pi_1=\sqrt{[K,K]}=\Z^5 \nor \Pi_0 = K \nor_f \Pi\]
and so 
\[ \Lambda_1 = \Z^5, \; \Lambda_0\cong \Z , \mbox{ and} \]
\[ A_1: \Pi \to \Aut{\Z^5}: (\vec{z}_2, z_1) \mapsto A^{z_1} ,\;\; A_0: \Pi \to \Aut{\Z}:  (\vec{z}_2, z_1) \mapsto 1 \]
For any $k\in \Z$ we now define the matrix 
\[ B_k = \begin{pmatrix}
k & 0 & 0 & 0 & 0\\
0 & -1 & 1 & 1 & 0\\
0 & 0 & 0 & -1 & 1\\
0 & 0 & -1 & 1 & 0 \\
0 & -1 & 1 & 0 & 0
\end{pmatrix}.\]
One can check that $B_kA = A^{-1} B_k$ and form this it follows that 
$\varphi_k : \Pi \to \Pi: (\vec{z}_2 , z_1) \mapsto ( B_k \vec{z}_2, - z_1)$ is an endomorphism of $\Pi$ (which is even an automorphism for $k=\pm 1$). \\
Now let $M$ be the solvmanifold with fundamental group $\Pi$. Then $M$ is 2-fold covered by the \NR solvmanifold with fundamental group $K$. Let $f_k$ be a selfmap of $M$ inducing the endomorphism $\varphi_k$. The linearisation of $\varphi_k$ is $\{ F_0 =-1, \,F_1= B_k\}$ and so by the averaging formula from Corollary~\ref{formc} above we find that 
\begin{eqnarray*}
N(f_k) & = & \dfrac{1}{[\PI:K]}\sum_{\bar\alpha\in\PI/K}\prod_{i=0}^1 \,\di{A_i(\alpha)F_i}\\
           & = & \dfrac{1}{2}(   |1-(-1)|\di{B_k}      +     |1-(-1)|\di{A B_k}\\
           & = & \di{B_k} + \di{AB_k} \\
           & = & 3 |1-k| + 3 |1+k| \\
           & = & \left\{ \begin{array}{l}
                              6 \mbox{ if } k =0\\
                              6|k| \mbox{ if } k\neq 0. \end{array}\right. 
\end{eqnarray*}
\end{example}

\section{Averaging formula over a non-invariant subgroup.}\label{ni}

In Corollary~\ref{formc}, we would like to lift the assumption that $K$ is fully invariant. We leave open whether such a result holds in general. In this section, we instead offer an averaging formula in case $K$ is ``net'', in the sense as defined below.

First, recall that $A\in\GL n$ is called \emph{net} if the multiplicative subgroup of $\C^*$ generated by all eigenvalues of $A$ does not contain a nontrivial root of unity. 
We now define:

\begin{definition}
Let $\PI$ be a \tf \vp group, and let $\Gamma\nor_f\PI$ be a normal, finite index \s subgroup of $\PI$. 
Let $\{\Delta_i, \lambda_i\}_{i=0,\dots, c}$ denote the linearisation corresponding to $\Gamma\nor_f\PI$. 
We say that $\Gamma$ is \emph{net} if for every $i\in \{0,\ldots,c\}$ and for all $\gamma$ in $\Gamma$ the automorphism $\lambda_i(\gamma)$ is net.
\end{definition}

We establish:

\begin{theorem}\label{prodni}
Let $f$ be a \sf on an \infra manifold $M=\tilde{M}/\PI$, and let $\KK\nor_f\PI$ be a net normal subgroup of $\PI$.
Let $\K\nor_f \KK$ be a fully invariant, finite index subgroup of $\PI$. 
Suppose that 
\begin{itemize}
\item $f$ induces $\p\in\End{\PI}$ having linearisation $\{F_0,\dots, F_c\}$ with respect to $\K$;
\item $\{\Lambda_i, A_i\}_{i=0,\dots, c}$ is the linearisation corresponding to $\K\nor_f\PI$.
\end{itemize}
Then $$N(f)=\dfrac{1}{[\PI:K']}\sum_{\bar\alpha\in\PI/K'}\prod_{i=0}^c \,\di{A_i(\alpha)F_i}.$$
\end{theorem}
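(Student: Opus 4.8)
The plan is to push $N(f)$ all the way down to $K$ by the fully invariant averaging formula of Corollary~\ref{formc}, and then to ``collapse'' the resulting average over $\PI/K$ into one over $\PI/K'$; the net hypothesis on $K'$ is exactly what makes the collapsing legitimate.

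\emph{Step 1 (reduce to $K$).} I would first check that $K$ itself satisfies the $\mathcal{NR}$-property, so that $S:=\ti M/K$ is an \NR solvmanifold and Corollary~\ref{formc} applies. For $k\in K$, the eigenvalue set $\eig{\tau_k}$ of conjugation by $k$ is the same whether computed inside $K$ or inside $K'$: a normal series of $K'$ refining through $K$ realizes both simultaneously, and the union of eigenvalues over its torsion-free sections is series-independent (the lemma stating that $\eig\p$ does not depend on the chosen series). Since $K'$ is net it satisfies the $\mathcal{NR}$-property, so $\eig{\tau_k}$ contains no nontrivial root of unity for $k\in K'$, in particular for $k\in K$. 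Hence Corollary~\ref{formc} gives
\[N(f)=\frac{1}{[\PI:K]}\sum_{\bar\beta\in\PI/K}\prod_{i=0}^{c}\di{A_i(\beta)F_i}.\]

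\emph{Step 2 (collapse the average).} Write $\Phi(\bar\beta):=\prod_{i=0}^{c}\di{A_i(\beta)F_i}$; by Corollary~\ref{formc} (and Theorem~\ref{prod}) this is $N(\bar\beta\bar f)$ for the self-map $\bar\beta\bar f$ of $S$, where $\bar f$ is a lift of $f$ and $\bar\beta$ is the covering transformation of $S\to M$ determined by $\beta$. Since $[\PI:K]=[\PI:K'][K':K]$, the theorem follows at once if $\Phi$ is constant on the cosets of $K'/K$ inside $\PI/K$: summing one representative per $K'$-coset then rescales the total by $[K':K]$ and produces $\tfrac1{[\PI:K']}\sum_{\bar\alpha\in\PI/K'}\prod_i\di{A_i(\alpha)F_i}$. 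Equivalently, one must show $N(\bar\kappa\bar\beta\bar f)=N(\bar\beta\bar f)$ for every $\kappa\in K'$ (viewing $\bar\kappa\in K'/K\le\PI/K$ as a covering transformation of $S\to M$); for $\kappa\in K$ this is precisely Remark~\ref{nr}, so the whole content is the passage from $K$ to $K'$.

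\emph{Step 3 (the key point).} Fix $\beta$ and let $\varphi_\beta:=\tau_\beta\circ\varphi$ be the endomorphism of $\PI$ induced by $\bar\beta\bar f$, so $\bar\kappa\bar\beta\bar f$ induces $\tau_\kappa\circ\varphi_\beta$. I would compute $N(\bar\beta\bar f)$ and $N(\bar\kappa\bar\beta\bar f)$ by the Keppelmann--McCord formula, but using a filtration of $K$ adapted to $K'$: namely the one whose terms are $N'_j\cap K$, where $1\nor N'_{s'}\nor\cdots\nor N'_1\nor K'$ is the filtration corresponding to $K'\nor_f\PI$. Each torsion-free section $(N'_j\cap K)/(N'_{j+1}\cap K)$ is a finite-index subgroup of $N'_j/N'_{j+1}$, so conjugation by any $\rho\in K'$ acts on it with the same eigenvalues as $\lambda_j(\rho)$, hence as a \emph{net} automorphism. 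With respect to this filtration one gets $N(\bar\beta\bar f)=\prod_j\di{G_j}$ and $N(\bar\kappa\bar\beta\bar f)=\prod_j\di{\mu_j(\kappa)G_j}$, where the $G_j$ come from $\varphi_\beta$, the $\mu_j(\kappa)$ are net, and $G_j\circ\mu_j(\kappa)=\mu_j(\varphi_\beta(\kappa))\circ G_j$. The theorem then reduces to the purely linear-algebraic identity $\prod_j\di{\mu_j(\kappa)G_j}=\prod_j\di{G_j}$, which should follow from an eigenvalue-with-multiplicity count: net-ness of the $\mu_j(\kappa)$ forces the roots of $\det(I-\mu_j(\kappa)G_j)$ to be a ``net twist'' of those of $\det(I-G_j)$ carrying the same multiset of values $|1-\cdot|$ — the very mechanism behind Theorem~\ref{prod}. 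I expect the main obstacle to lie here, compounded by the two bookkeeping points that (a) the Keppelmann--McCord product formula remains valid for this non-standard, $\varphi_\beta$-invariant filtration of $K$, and (b) this filtration and the quantities $G_j,\mu_j(\kappa)$ genuinely match the ones appearing in $A_i(\beta)F_i$ up to commensurability of the sections, so that Steps~1--2 and Step~3 refer to the same numbers.
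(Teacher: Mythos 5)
Your Steps 1 and 2 are correct and coincide with the paper's own reduction: one applies the fully invariant averaging formula over $K$ (which is indeed net, hence $\mathcal{NR}$, being a fully invariant finite index subgroup of the net group $K'$) and then shows that $\alpha\mapsto\prod_{i=0}^c\di{A_i(\alpha)F_i}$ is constant on cosets of $K'$. Note also that your ``adapted filtration'' in Step 3 is not actually new: by Lemma~\ref{tech2} one has $N'_j\cap K=N_j$, so the sections $(N'_j\cap K)/(N'_{j+1}\cap K)$ are exactly the $N_j/N_{j+1}$, and your bookkeeping worries (a) and (b) evaporate.

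The genuine gap is in Step 3, and it is the heart of the matter. The ``purely linear-algebraic identity'' you reduce to is precisely the content of Lemma~\ref{matrixanalysis}, whose proof occupies the entire appendix of the paper (reduction to unipotent and semisimple parts, a polynomial-identity argument in the unipotent case, and a delicate analysis of moduli and arguments of eigenvalues in the semisimple case). Asserting that it ``should follow from an eigenvalue-with-multiplicity count'' and is ``the very mechanism behind Theorem~\ref{prod}'' is not a proof; Theorem~\ref{prod} does not contain this statement, and the generalisation from the Keppelmann--McCord setting ($m=1$, integral data) to what is needed here ($\Z^m$, a rational matrix $\Phi$ with only $\Phi(k\Z^m)\subseteq\Z^m$) is nontrivial. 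Worse, the per-level identity $\di{\mu_j(\kappa)G_j}=\di{G_j}$ that you claim is \emph{false} as stated: it needs the extra hypothesis that the map induced on the top section (the matrix $\Phi$ of the lemma, i.e.\ the rational extension of $A_0(\alpha)F_0$ to $K'/N'$) does not have $1$ as an eigenvalue. For instance, if $\varphi_\beta$ acts as the identity on each section, the intertwining relation is vacuous and $\de{\mu_j(\kappa)}$ need not equal $\de{I}=0$ for a net $\mu_j(\kappa)$. The paper therefore splits into two cases: if $\de{A_0(\alpha)F_0}=0$ both products vanish (because $A_0(x)=I$ for $x\in K'$, which is your level-$0$ observation), and only when $\de{A_0(\alpha)F_0}\neq0$ does it apply Lemma~\ref{matrixanalysis} to each $i\geq1$. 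Without this case distinction and without an actual proof of the matrix lemma, the argument is incomplete.
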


\begin{remark}
As $K'$ is net, any fully invariant subgroup $K\leq_f K'$ will itself be net. Note that given $K'$, we can e.g.\ take $K$ to be the subgroup of $\Gamma$ which is generated by all elements $g^m$ with $g\in \Pi$ and $m=[\Pi:K']$.
\end{remark}

\begin{example}
Before we start with the proof of this theorem, let us give an example of groups $\Pi$ and $K'$ where this theorem is applicable. 
Consider the group $\Z^5 \rtimes \Z$ form Example~\ref{big example} and now take $\Pi= \Z \times (\Z^5\rtimes \Z) $. 
So elements of $\Pi$ can now be written as triples $(z_3, \vec{z}_2, z_1)$. The group $K' = \Z \times (\Z^5 \times 2\Z)  $ is a net normal subgroup of $\Pi$ (of index 2).
However, $K'$ is not fully invariant as the endomorphism $\varphi : \Pi \to \Pi: (z_3, \vec{z}_2, z_1) \mapsto (0, \vec{0},z_3)$ does not map $K'$ to itself. The group 
$K=2\Z \times (\Z^5 \rtimes 2\Z) $ can be used as a fully invariant  finite index subgroup of $K'$ as in the stament of the theorem above.
 
\end{example}

We need three technical lemmas to prove the above theorem.

\begin{lemma}\label{s=c}
Let $\Gamma$ be an \NR group and $\Gamma'\leq_f\Gamma$ a finite index subgroup. 
Then $[\Gamma',\Gamma']\leq_f[\Gamma,\Gamma]$ as well. 
\end{lemma}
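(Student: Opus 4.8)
\textbf{Proof proposal for Lemma~\ref{s=c}.}

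The plan is to reduce the statement to the well-known fact that a finite-index subgroup of a finitely generated abelian (or poly-$\Z$) group has finite index in a way that is compatible with the commutator structure. More precisely, I would argue as follows. Since $\Gamma'\leq_f\Gamma$ and $\Gamma$ is polycyclic-by-finite, $\Gamma'$ is itself polycyclic-by-finite and $[\Gamma',\Gamma']$ is a subgroup of $[\Gamma,\Gamma]$; the only thing to prove is that the index $[\,[\Gamma,\Gamma]:[\Gamma',\Gamma']\,]$ is finite. First I would pass to a normal finite-index subgroup: replacing $\Gamma'$ by the normal core $\bigcap_{g\in\Gamma}g\Gamma'\ii g$ only shrinks $\Gamma'$, hence shrinks $[\Gamma',\Gamma']$, so it suffices to prove the claim for a \emph{normal} finite-index subgroup $\Gamma'\nor_f\Gamma$. (Strictly, one should check the resulting normal subgroup is still an \NR group; but every subgroup of an \NR group is \NR, since the eigenvalue condition only involves conjugation actions on subquotients, which restrict — so this is harmless, and in fact the \NR hypothesis is not even needed for this lemma, it is just the ambient setting.)

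With $\Gamma'\nor_f\Gamma$, set $Q:=\Gamma/\Gamma'$, a finite group. Consider the commutator map and the quotient $\Gamma/[\Gamma',\Gamma']$. The subgroup $\Gamma'/[\Gamma',\Gamma']$ is abelian, finitely generated (as $\Gamma'$ is polycyclic), and normal in $\Gamma/[\Gamma',\Gamma']$ with finite quotient $Q$. So $\Gamma/[\Gamma',\Gamma']$ is a finitely generated group which is (finitely generated abelian)-by-finite, hence polycyclic-by-finite, hence its own commutator subgroup $[\Gamma,\Gamma]/[\Gamma',\Gamma']$ is finitely generated. Now I would show this commutator subgroup is actually finite. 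It suffices to show that $[\Gamma,\Gamma]/[\Gamma',\Gamma']$ has no element of infinite order, equivalently that its image in $(\Gamma/[\Gamma',\Gamma'])/\tau$, where $\tau$ is the torsion subgroup of the abelian normal part $\Gamma'/[\Gamma',\Gamma']$ — hmm, this needs care. A cleaner route: let $A:=\Gamma'/[\Gamma',\Gamma']$, a finitely generated abelian group, on which $Q$ acts; then $\Gamma/[\Gamma',\Gamma']$ is an extension $1\to A\to \Gamma/[\Gamma',\Gamma']\to Q\to 1$. Its commutator subgroup $[\Gamma,\Gamma]/[\Gamma',\Gamma']$ maps onto $[Q,Q]$ (finite) with kernel contained in $A$; so it is enough to show $[\Gamma,\Gamma]\cap A /[\Gamma',\Gamma']$ — wait, I should just show the kernel, which is a subgroup of $A$ of the form (augmentation-ideal part) is finitely generated, which it automatically is since $A$ is, and then show it has finite index in $[\Gamma,\Gamma]/[\Gamma',\Gamma']$... that direction is trivial. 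The real content is finiteness.

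So the key step — and the main obstacle — is: \emph{in the extension $1\to A\to E\to Q\to1$ with $A$ f.g.\ abelian and $Q$ finite, the subgroup $[E,E]$ has finite index in $[\Gamma,\Gamma]$'s image}, i.e.\ $[\Gamma,\Gamma]/[\Gamma',\Gamma']$ is finite. I would prove this by a direct commutator computation: pick coset representatives $t_1,\dots,t_n$ of $Q$ in $E$; every element of $E$ is $a t_i$ with $a\in A$, and a general commutator $[a t_i, b t_j]$ expands, using that $A$ is abelian and normal, into a product of a ``constant'' term $[t_i,t_j]$ and terms of the form $(t_k\cdot c\cdot \ii{t_k})\,\ii c$ for $c\in A$, i.e.\ elements of the additive subgroup $I_A:=\langle \gamma\cdot c - c : \gamma\in Q, c\in A\rangle$ of $A$ (written additively). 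Hence $[E,E]\subseteq \langle [t_i,t_j]\rangle\cdot I_A$, and conversely $I_A\le [E,E]$; so $[E,E]$ differs from $I_A$ by the finitely many elements $[t_i,t_j]$, giving $[\,[E,E]:I_A\,]<\infty$. Thus it remains to see $[\Gamma,\Gamma]/[\Gamma',\Gamma']$ is finite, i.e.\ $I_A$ together with the $[t_i,t_j]$ is finite; but $I_A$ need not be finite in general, so this particular decomposition shows the quotient need not be finite unless we use more — which signals that the cleanest correct statement is simply $[\Gamma',\Gamma']\leq_f[\Gamma,\Gamma]$ via: $[\Gamma,\Gamma]$ is finitely generated (polycyclic), it lies in $\Gamma'$ up to finite index? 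No. Let me instead finish via the standard fact I should have invoked at the start: a finite-index subgroup $\Gamma'$ of a finitely generated group $\Gamma$ contains a term of the lower central series only virtually, but more to the point, since $\Gamma/\Gamma'$ is finite, $[\Gamma,\Gamma]\Gamma'=\Gamma'$ need not hold — rather $[\Gamma,\Gamma]\Gamma'/\Gamma' = [\Gamma/\Gamma',\Gamma/\Gamma'] = [Q,Q]$, so $[\Gamma,\Gamma]\cap\Gamma'$ has finite index in $[\Gamma,\Gamma]$; and $[\Gamma,\Gamma]\cap\Gamma'$ is a finite-index subgroup of $\Gamma'$, hence of $\Gamma$, so it is finitely generated, and finally I claim $[\Gamma',\Gamma']$ has finite index in $[\Gamma,\Gamma]\cap\Gamma'$ — here I would use that $[\Gamma,\Gamma]\cap\Gamma'$ is a finite-index subgroup of the nilpotent-by-abelian... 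The honest summary for the writeup: \textbf{the main obstacle is this final index computation}, and I expect the author handles it by exhibiting, via the $P(\R^h)$-polynomial / poly-$\Z$ structure, an explicit subnormal series; I would mirror that, reducing by the standard fact (valid for polycyclic-by-finite groups) that for any finite-index $H\leq_f G$ one has $[H,H]\leq_f [G,G]$ — which in turn follows by induction on Hirsch length using that for $G$ poly-$\Z$, $G/[G,G]$ and $H/[H,H]$ have equal Hirsch length, forcing $[H,H]$ to have finite index in the finitely generated group $[G,G]$. The induction base (abelian $G$, where $[G,G]=[H,H]=1$) is trivial, and the inductive step pushes through a central (or normal free-abelian) subgroup.
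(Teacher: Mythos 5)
Your proposal does not close. The decisive step --- showing that $[\Gamma',\Gamma']$ (equivalently, the subgroup $I_A$ you construct) has finite index in $[\Gamma,\Gamma]$ --- is never actually proved: you observe yourself that $I_A$ ``need not be finite'' and then fall back on a ``standard fact (valid for polycyclic-by-finite groups) that for any finite-index $H\leq_f G$ one has $[H,H]\leq_f[G,G]$''. That fact is false. Take $G=\Z\rtimes_{-1}\Z=\langle a,b\mid bab^{-1}=a^{-1}\rangle$ (the Klein bottle group, which is poly-$\Z$) and $H=\langle a,b^2\rangle\cong\Z^2$, of index $2$. Then $[G,G]=\langle a^2\rangle\cong\Z$ while $[H,H]=1$, which has infinite index. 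The same example kills the inductive step you sketch: $G/[G,G]\cong\Z\times\Z/2\Z$ has Hirsch length $1$ whereas $H/[H,H]\cong\Z^2$ has Hirsch length $2$, so the two abelianisations do not have equal Hirsch length. Relatedly, your parenthetical claim that ``the $\mathcal{NR}$ hypothesis is not even needed for this lemma'' is exactly backwards: the example above is the generic failure mode, and $G=\Z\rtimes_{-1}\Z$ fails to be an $\mathcal{NR}$-group precisely because conjugation by $b$ acts on $\sqrt[G]{[G,G]}=\langle a\rangle\cong\Z$ as $-1$, a nontrivial root of unity. The $\mathcal{NR}$ hypothesis is the whole content of the lemma.

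For comparison, the paper's proof uses that hypothesis as follows. First replace $\Gamma'$ by a finite-index subgroup $S$ that is characteristic in $\Gamma$ (normal core, then $m$-th powers); since $[S,S]\subseteq[\Gamma',\Gamma']\subseteq[\Gamma,\Gamma]$, it suffices to show $[S,S]\leq_f[\Gamma,\Gamma]$. The quotient $\Gamma/[S,S]$ is virtually abelian; dividing out its maximal finite normal subgroup yields a crystallographic group $\Gamma/\tilde S$ with translation lattice $T/\tilde S$ and finite holonomy $F=\Gamma/T$ acting faithfully on $T/\tilde S$. The $\mathcal{NR}$ property guarantees that no element of $\Gamma$ acts on the factor $T/\tilde S$ with a nontrivial root of unity as an eigenvalue; but a finite-order integral matrix without nontrivial roots of unity among its eigenvalues is the identity, so the faithful action of $F$ forces $F=1$, hence $\Gamma/\tilde S$ is abelian and $[\Gamma,\Gamma]\subseteq\tilde S$. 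Since $[S,S]\leq_f\tilde S$ by construction, $[S,S]\leq_f[\Gamma,\Gamma]$. Any repair of your argument must feed the eigenvalue condition into the finite quotient in some such way; no purely index-theoretic argument can succeed.
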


\begin{proof}
First, we take a finite index subgroup $S\leq_f \Gamma'$ that is characteristic in $\Gamma.$ This is always possible: consider $\Gamma_0= \bigcap_{\gamma\in \Gamma} \gamma \Gamma' \gamma^{-1}$. As $\gamma' \Gamma' \gamma'^{-1} = \Gamma'$ for all $\gamma'\in \Gamma'$, there are only finitely many different terms $\gamma \Gamma \gamma^{-1}$. Hence, $\Gamma_0$ is a normal subgroup of finite index, say $m$, in $\Gamma$, which is contained in $\Gamma'$. Taking $S$ to be the group generated by all elements of the form $\gamma^m$, we find a finite index characteristic subgroup of $\Gamma$ with $S\leq \Gamma'$.

Since $[S,S]\subseteq[\Gamma',\Gamma']\subseteq[\Gamma,\Gamma]$, it is sufficient to show that $[S,S]\leq_f[\Gamma,\Gamma]$.
To this end, note that $$\frac{S}{[S,S]}\nor_f\frac{\Gamma}{[S,S]},$$ hence $\Gamma/[S,S]$ is virtually abelian. 
Let $G$ be the maximal finite normal subgroup of $\Gamma/[S,S]$, and set $\tilde S:=p^{-1}(G)$, where $p:\Gamma\to\Gamma/[S,S]$ denotes the natural projection. 
Then $\tilde S$ is a characteristic subgroup of $\Gamma $ containing $[S,S]$ as a finite index subgroup.

By construction,  $\Gamma/\tilde S$ has no nontrivial finite normal subgroups. 
As $\Gamma/\tilde S\cong (\Gamma/[S,S])/(\tilde S/[S,S])$ is virtually abelian, 
\cite[Theorem~1.1]{di93-1} implies that $\Gamma/\tilde S $ is a crystallographic group. This means that $\Gamma/\tilde S $ contains a unique maximal abelian and normal subgroup $T/\tilde S$ which is free abelian and of finite index in $\Gamma/\tilde S$.
So  $\Gamma/\tilde S$ fits in the exact sequence
$$1\longrightarrow \Z^k\cong T/\tilde S\longrightarrow\Gamma/\tilde S\overset{q}{\longrightarrow} F\longrightarrow1$$ with $F$ finite.

As $T/\tilde S$ is maximal abelian in $\Gamma/\tilde S$, the finite group $F=\Gamma/T$ acts faithfully on $T/\tilde S$ via conjugation in $\Gamma/\tilde S.$ 

Let $1\nor \tilde S_s\nor\cdots\nor \tilde S$ be a series of characteric subgroups of $\tilde S$  having finite or abelian factors. 
As $\tilde S$ is itself characteristic in $\Gamma$, the extended series
$$1\nor \tilde S_s\nor \cdots\nor\tilde S\nor T\nor \Gamma$$ is a normal series of $\Gamma$ with finite or abelian factors. 
As $\Gamma$ is $\mathcal{NR}$, the action of any element of $\Gamma$ on $T/\tilde S$ by conjugation does not have a nontrivial root of unity as an eigenvalue, 
hence neither does the action of any element of $F\cong\Gamma/T$ on $T/\tilde S$. 
However, as $F$ is finite, the action of $F$ on $T/\tilde S$ must then be trivial, implying $F=1$ as this action is faithful as well. 
Hence $\Gamma/\tilde S=T/\tilde S$ is abelian, showing $[\Gamma,\Gamma]\leq \tilde S$.
So $$[S,S]\leq [\Gamma,\Gamma]\leq \tilde S.$$
Since $[S,S]\leq_f\tilde S$, also $[S,S]\leq_f[\Gamma,\Gamma]$.
\end{proof}

\begin{lemma}\label{tech2}
Let $\PI$ be a torsion free polycyclic-by-finite group, and let $\K, \KK\nor \PI$ be finite index \NR groups with $\K\subseteq \KK$.  
Let $$1\nor N_c\nor \dots\nor N_1=N\nor K\nor_f\PI\ \text{ and }\ 1\nor N'_s\nor \dots\nor N'_1=N'\nor K'\nor_f\PI$$ be the filtrations corresponding to $K\nor_f\PI$ and $K'\nor_f\PI$, respectively. 
Let $\{\Lambda_i, A_i\}_{i=0,\dots, c}$ be the linearisation corresponding to $\K\nor_f \PI$.
Then 
\begin{itemize}
    \item $c=s$;
    \item $N_i=\K\cap N'_i$
    for all $i=1,\dots,c$;
    \item $A_i$ is trivial on $N'$ for all $i=1,\dots,c$.    
\end{itemize}
\end{lemma}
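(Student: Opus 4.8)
The plan is to analyse the two filtrations through their common ``derived layer''. Set $N=\iso{K}{[K,K]}$ and $N'=\iso{K'}{[K',K']}$. First I would show $N = K \cap N'$. By Lemma~\ref{s=c} applied to $K\f K'$, we have $[K,K]\f[K',K']$, so $\iso{K}{[K,K]}=\iso{K}{[K',K']}$; and since $K\f K'$, one checks directly that $\iso{K}{[K',K']}=K\cap\iso{K'}{[K',K']}=K\cap N'$. This identifies $N$ as a finite-index subgroup of $N'$, in particular $N$ and $N'$ have the same nilpotency class, which (combined with the parallel analysis of the nilpotent layers below) will give $c=s$. The key point is that the isolator operation is insensitive to passing to a finite-index overgroup in this situation.

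Next I would descend through the lower central series. Writing $N_i=\iso{N}{\gamma_i(N)}$ and $N'_i=\iso{N'}{\gamma_i(N')}$, I claim $N_i=K\cap N'_i$ for all $i$, which I would prove by induction on $i$, the base case $i=1$ being the previous paragraph. Since $N\f N'$, we have $\gamma_i(N)\f\gamma_i(N')$ (a finite-index subgroup of a \fingertof has proportionally-sized lower central terms), hence $\iso{N}{\gamma_i(N)}=\iso{N}{\gamma_i(N')}=N\cap\iso{N'}{\gamma_i(N')}=N\cap N'_i=(K\cap N')\cap N'_i=K\cap N'_i$, using $N'_i\subseteq N'$. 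This also yields $c=s$: both equal the least index for which the isolator becomes trivial, and $N_i$ trivial iff $K\cap N'_i$ trivial iff $N'_i$ trivial since $\PI$ is \tf and $N_i\f N'_i$.

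Finally, the triviality of $A_i$ on $N'$ for $i=1,\dots,c$. Recall $A_i:\PI\to\Aut{\Lambda_i}$ is conjugation on $\Lambda_i=N_i/N_{i+1}$, and I must show $\gamma x \ii\gamma N_{i+1}=xN_{i+1}$ for all $\gamma\in N'$, $x\in N_i$. The idea is that modulo $N_{i+1}$, the group $N_i/N_{i+1}$ is central in $N/N_{i+1}$ (by construction of the isolated lower central series), so any element of $N$ acts trivially; then I extend from $N$ to $N'$ using that $N\f N'$, so $\gamma^m\in N$ for some $m>0$, and the action of $\gamma$ on the \tf abelian group $\Lambda_i$ is trivial as soon as its $m$-th power is. Concretely: $[\gamma^m,x]\in[N,N_i]\subseteq\gamma_{i+1}(N)\subseteq N_{i+1}$, so $A_i(\gamma)^m$ is the identity on $\Lambda_i$; since $\Lambda_i$ is free abelian and $A_i(\gamma)\in\GLZ{\cdot}$ has $A_i(\gamma)^m=I$, and we are in the \NR/net setting where no nontrivial roots of unity occur as eigenvalues—wait, more simply: an integer matrix whose $m$-th power is $I$ need not itself be $I$, so here I instead argue directly that $\gamma$ normalises $N$ and $N_i$, and $[\gamma, N_i]\subseteq N_{i+1}$: indeed $[\gamma,x]\in N'$ has the property that some power lies in $\gamma_{i+1}(N)$, hence $[\gamma,x]\in\iso{N'}{\gamma_{i+1}(N')}\cap N=N_{i+1}$ by the second bullet. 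This is the step I expect to be the main obstacle, since it requires being careful that the commutator $[\gamma,x]$ with $\gamma\in N'\setminus N$ still lands in the isolated term $N_{i+1}$ and not merely in $N'_{i+1}$; the identification $N_{i+1}=K\cap N'_{i+1}$ together with $[\gamma,x]\in K$ (as $x\in K$ and $K\nor\PI$) resolves it.
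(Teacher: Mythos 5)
Your proposal is correct and follows essentially the same route as the paper: identify $N_i$ with $K\cap N_i'$ (the paper does this by noting $(K\cap N_i')/N_i$ is simultaneously finite and torsion free, you do it via isolator identities, but both rest on $N\leq_f N'$ from Lemma~\ref{s=c}), and then conclude from $[\gamma,x]\in[N',N_i']\subseteq N_{i+1}'$ together with $[\gamma,x]\in K$ that the commutator lies in $N_{i+1}'\cap K=N_{i+1}$. The route you correctly abandon mid-argument (deducing $A_i(\gamma)=I$ from $A_i(\gamma)^m=I$) is indeed a dead end, and the direct commutator argument you settle on is exactly the paper's.
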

\begin{proof}
As $N\leq_f N'$ by Lemma~\ref{s=c}, also $N_i\leq_f N_i'$ and hence $c=s$.
Moreover, as $\frac{K\cap N'}{N}$ is both torsion free (since $K/N$ is torsion free) and finite (since $N'/N$ is finite), $K\cap N'=N$.
Note that $N/N_i$ is torsion free by definition of $N_i$. Hence, for all $i=1,\dots, c$, also $\frac{K\cap N_i'}{N_i}$ is both torsion free and finite, so $K\cap N_i'=N_i$.
For all $n'\in N'$ and $n_i\in N_i$, it then follows that $$n'n_in'^{-1}n_i^{-1}\in [N',N'_i]\cap N_i\subseteq N'_{i+1}\cap K=N_{i+1},$$
so $A_i$ is trivial on $N'$.
\end{proof}

In order to show that \NR manifolds satisfy the Anosov relation, Keppelmann and McCord \cite[Theorem~4.2]{km95-1} proved the following lemma for the case $k=1$, albeit under the more general assumption that each $A(v)$ does not have nontrivial roots of unity as an eigenvalue.  
We defer the rather lengthy proof of this lemma to the Appendix.

\begin{lemma}\label{matrixanalysis}
Let $X\in\Z^{n\times n}$ and $\Phi\in \Q^{m\times m}$ be matrices and let $A:\Z^m\to\operatorname{SL}_n(\Z)$ be an endomorphism such that $A(v)$ is net for all $v\in\Z^m$.
Suppose that $\Phi$ does not have $1$ as an eigenvalue, and that there exists $k\in\N$ such that $\Phi(k\Z^m)\subseteq \Z^m$ and $XA(kv)=A(\Phi(kv))X$ for all $v\in\Z^m$. 
Then $\de{A(v)X}=\de{X}$ for all $v\in\Z^m$. 
\end{lemma}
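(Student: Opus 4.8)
The plan is to show that the integer-valued function $h\colon\Z^m\to\Z$ defined by $h(v)=\de{A(v)X}$ is constant; since $A(0)$ is the identity this gives $h(v)=h(0)=\de{X}$ for all $v$, which is the lemma. I would prove constancy by combining two descriptions of $h$: a periodicity coming from the twisting relation, and an exponential-polynomial shape coming from the fact that $A$ is a homomorphism with commuting image.

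For the periodicity, fix $v'\in\Z^m$. The hypothesis gives $XA(kv')=A(\Phi(kv'))X$ with $\Phi(kv')\in\Z^m$, so $X=A(-\Phi(kv'))XA(kv')$, and hence, using $\de{PQ}=\de{QP}$,
$$\de{A(v)X}=\de{A\big(v-\Phi(kv')\big)XA(kv')}=\de{A\big(v+k(\mathrm{Id}-\Phi)(v')\big)X}.$$
Thus $h$ is invariant under translation by the subgroup $L:=k(\mathrm{Id}-\Phi)(\Z^m)\le\Z^m$. Since $\Phi$ does not have $1$ as an eigenvalue (and $k\ne0$), the rational matrix $k(\mathrm{Id}-\Phi)$ is invertible, so $L$ has finite index in $\Z^m$; hence $h$ factors through the finite abelian group $\Z^m/L$, and its expansion in the characters of that group exhibits $h$ as a finite $\C$-linear combination of characters $\Z^m\to\C^{*}$ of finite order.

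On the other hand, triangularising the commuting matrices $A(e_1),\dots,A(e_m)$ simultaneously over $\C$ shows that every entry of $A(v)=A(e_1)^{v_1}\cdots A(e_m)^{v_m}$, and hence (after expanding the determinant) $h(v)$ itself, is a finite sum $\sum_{\omega}p_{\omega}(v)\,\omega(v)$ in which $p_{\omega}\in\C[v_1,\dots,v_m]$ and each frequency $\omega\colon\Z^m\to\C^{*}$ is a character such that, for every $j$, $\omega(e_j)$ lies in the subgroup of $\C^{*}$ generated by the eigenvalues of $A(e_j)$. Now the expansion of a function on $\Z^m$ in the form $\sum_{\omega}p_{\omega}(v)\omega(v)$ is unique, since the functions $v\mapsto v^{\alpha}\omega(v)$ (over multi-indices $\alpha$ and characters $\omega$) are linearly independent; comparing the two descriptions of $h$ therefore forces every frequency $\omega$ in $h$ to have finite order and every $p_{\omega}$ to be constant. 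But a finite-order $\omega$ has each $\omega(e_j)$ a root of unity lying in $\langle\text{eigenvalues of }A(e_j)\rangle$, which is net by hypothesis; so $\omega(e_j)=1$ for all $j$, i.e.\ $\omega$ is trivial. Hence $h$ reduces to a single constant term, completing the proof.

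The step I expect to be the main obstacle is the one in the third paragraph: one must check carefully that every frequency occurring in $\de{A(v)X}$ has all of its coordinate values $\omega(e_j)$ inside $\langle\text{eigenvalues of }A(e_j)\rangle$ — this is exactly what lets the net condition be applied coordinate by coordinate — and one must justify the linear independence of the polynomial-character functions on $\Z^m$. The remaining hypotheses enter transparently: $\Phi$ not having $1$ as an eigenvalue is what makes $L$ of finite index, and netness is what kills every finite-order frequency. This argument streamlines the $k=1$ case of Keppelmann--McCord, at the price of assuming the full net condition in place of the weaker requirement that the $A(v)$ have no nontrivial roots of unity as eigenvalues.
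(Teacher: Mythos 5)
Your proof is correct, and it takes a genuinely different route from the one in the paper. The paper's argument splits $A(v)=U(v)T(v)$ into unipotent and semisimple parts (using a Zariski-closure argument to show that $X$ almost $\Phi$-commutes with each part separately), handles the unipotent part by a polynomial-vanishing argument on a sublattice, and handles the semisimple part by choosing a common eigenbasis and tracking the eigenvalue relations $\lambda_{\sigma^{i+1}}(kw)=\lambda_{\sigma^{i}}(\Phi(kw))$ through a logarithmic bookkeeping of moduli and angles, where $1\notin\eig{\Phi}$ kills the real parts and forces certain products of eigenvalues to be roots of unity, which netness then trivialises. You instead observe directly that $h(v)=\de{A(v)X}$ is invariant under translation by the finite-index lattice $k(\mathrm{Id}-\Phi)(\Z^m)$ --- this single identity, obtained from $\det(I-PQ)=\det(I-QP)$, is where both the twisted commutation and $1\notin\eig{\Phi}$ enter --- so that $h$ is a finite combination of finite-order characters; on the other hand the homomorphism property of $A$ makes $h$ an exponential polynomial whose frequencies $\omega$ satisfy $\omega(e_j)\in\langle\eig{A(e_j)}\rangle$, and uniqueness of exponential-polynomial representations forces every surviving frequency to be a finite-order element of a net group, hence trivial, and every coefficient to be constant. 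Your version avoids the Jordan decomposition and the case analysis entirely (the polynomial coefficients absorb the unipotent part and the characters the semisimple part), at the cost of invoking the linear independence of the functions $v\mapsto v^{\alpha}\omega(v)$ on $\Z^m$; that fact is standard (the multivariate uniqueness of exponential-polynomial representations, provable by induction on $m$ from the one-variable generalized Vandermonde statement), and the frequency bookkeeping you flag is routine once $A(e_1),\dots,A(e_m)$ are simultaneously triangularised, so I see no gap. As you note, both arguments ultimately use the net hypothesis in the same way and for the same reason: the determinant expansion multiplies eigenvalues of the $A(e_j)$ together, so roots of unity must be excluded from the generated group and not merely from the spectrum.
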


We are finally ready for

\begin{proof}[Proof of Theorem~\ref{prodni}]
It is sufficient to prove that the function $\PI\to\R:\alpha\mapsto\prod_{i=0}^c \,\di{A_i(\alpha)F_i}$ is constant on cosets of $K'$.
We will prove this by showing that for each $x\in K'$ and every $\alpha\in\PI$,
\begin{itemize}
    \item $\de{A_0(x)A_0(\alpha)F_0}=\de{A_0(\alpha)F_0}$;
    \item $\de{A_i(x)A_i(\alpha)F_i}=\de{A_i(\alpha)F_i}$ for all $i\in\{1,\ldots,c\}$\\ whenever $\de{A_0(\alpha)F_0}\neq 0$.
\end{itemize}

For the first item, it suffices to note that $K/N$ embeds naturally into $K'/N'$, for $K\cap N'=N$ by Lemma~\ref{tech2}. 
Hence, if $x\in K'$, the commutative diagram
\begin{center}
\begin{tikzcd}[row sep=large]
K/N \arrow[hook]{r}{} \arrow{d}[swap]{A_0(x)\,}
               &K'/N' \arrow{d}{\,\mathrm{Id}}\\
K/N \arrow[hook]{r}{} &K'/N'
\end{tikzcd}
\end{center}
shows that $A_0(x)=I$.

For the second item, take $x\in K'$, $\alpha\in\PI$ and $i\in\{1,\dots,c\}$.
It is easy to verify that 
$$\tau_\alpha\circ\p\circ\tau_k=\tau_{(\tau_\alpha\circ\p)(k)}\circ\tau_\alpha\circ\p$$ for all $k\in K.$ 
Comparing the maps induced on $\Lambda_i$, we get
\begin{equation*}
    A_i(\alpha)F_i\rho_i(\bar k)=\rho_i(A_0(\alpha)F_0(\bar k))A_i(\alpha)F_i\tag{$\ast$}
\end{equation*}
for all $\bar k\in K/N$. Here $\rho_i:K/N\to\Aut{\Lambda_i}$ is the conjugacy action as defined on page~\pageref{rho_i}.

Identify $K'/N'$ with $\Z^m$ for some $m\in\N$. Then $K'/N'$ sits naturally in $\Q^m$. 
As $K/N$ is a finite index subgroup of $K'/N'$, there exists $\Phi\in\Q^{m\times m}$ making the diagram
\begin{center}
\begin{tikzcd}[row sep=large]
K/N \arrow[hook]{r}{} \arrow{d}[swap]{A_0(\alpha)F_0\,}
               &K'/N'\cong\Z^m \arrow[hook]{r}{} &\Q^m \arrow{d}{\,\Phi}\\
K/N \arrow[hook]{r}{} &K'/N'\cong\Z^m \arrow[hook]{r}{} &\Q^m
\end{tikzcd}
\end{center}
commute. 
Note that $\Phi(d\Z^m)\subseteq\Z^m$ for any $d\in \N$ satisfying $d\Z^m\subseteq K/N$ (in
$K'/N'\cong\Z^m$).

As $A_i$ is trivial on $N'$ by Lemma~\ref{tech2}, $\rho_i$ extends to an action
$\tilde\rho_i:K'/N'\to \Aut{\Lambda_i}$. 
Using $K'/N'\cong \Z^m$ and $\Lambda_i\cong \Z^n$ for some $n\in\Z$, we obtain a morphism $A:\Z^m\to \SLZ n.$

Let $X\in \Z^{n\times n}$ represent $A_i(\alpha)F_i$ on $\Lambda_i\cong\Z^n$. 
Then ($\ast$) implies that $$XA(dv)=A(\Phi(dv))X$$  for all $v\in\Z^m
.$
Lemma~\ref{matrixanalysis} now asserts that 
$\det(I-A(v)X)=\det(I-X)$ for all $v\in \Z^m$  when $\de{\Phi}\neq 0$, or, by translating back, that 
$\det(I-A_i(x)A_i(\alpha) F_i) =\det(I-A_i(\alpha) F_i) )$ for all $x\in K'$ whenever  $\de{A_0(\alpha)F_0}\neq 0$.
\end{proof}

The expression for $N(f)$ in Theorem~\ref{prodni} is not completely satisfactory since it still depends on the fully invariant \NR subgroup $K$. We next explain how to compute this expression directly from $K'$, without having to know $K$ explicitly.

Let $K'\nor_f \PI$ be net, say $[\PI:K']=m\in\N$. Consider the subgroup $K:=\langle\{g^m\mid g\in\PI\}\rangle$ generated by all the $m$-th powers of elements of $\PI$. Then $K\nor_f K'$ is a fully invariant finite index subgroup of $\Pi$.

Let $1\nor N_c\nor \dots\nor N_1=N\nor K\nor_f\PI$ and $1\nor N'_c\nor \dots\nor N'_1=N'\nor K'\nor_f\PI$ be the filtrations corresponding to $K\nor_f\PI$ and $K'\nor_f\PI$, respectively. 
Then, using the same notations as before,
$\tau_\alpha\circ\p$ induces the morphisms 
$A_i(\alpha)F_i$, $i=0,\dots,c$, on $N_i/N_{i+1}$.
As $N_i/N_{i+1}$ embeds as a finite index subgroup of $N'_i/N'_{i+1}$, there exists $M^i_{\tau_\alpha\circ\p}\in\Q^{k_i\times k_i}$ fitting in the commutative diagram
\begin{center}
\begin{tikzcd}[row sep=large]
N_i/N_{i+1} \arrow[hook]{r}{} \arrow{d}[swap]{A_i(\alpha)F_i\,}
               &N'_i/N'_{i+1}\cong\Z^{k_i} \arrow[hook]{r}{} &\Q^m \arrow{d}{\,M^i_{\tau_\alpha\circ\p}}\\
N_i/N_{i+1} \arrow[hook]{r}{} &N'_i/N_{i+1}'\cong\Z^{k_i} \arrow[hook]{r}{} &\Q^m
\end{tikzcd}
\end{center}
Moreover, $\de{A_i(\alpha)F_i}=\de{M^i_{\tau_\alpha\circ\p}}$.

Since $m\Z^{k_i}\subseteq N_i/N_{i+1}$ (in $N'_i/N'_{i+1}\cong\Z^{k_i}$), we can compute $M^i_{\tau_\alpha\circ\p}$ explicitly by fixing an isomorphism $N_i'/N_{i+1}'\cong \Z^{k_i}.$ Indeed, take $z_j\in N_i'$ such that $\{\bar z_1,\dots,\bar z_{k_i}\}$ generates $N_i'/N_{i+1}'.$ Let $\{e_1,\dots, e_{k_i}\}$ denote the standard basis of $\Z^{k_i}$. Then $N'_i/N'_{i+1}\cong\Z^{k_i}$ via $\bar z_j\leftrightarrow e_j$.

By construction,  $z_j^m \in N_i$, hence also $\tau_\alpha\circ\p(z_j^m)\in N_i$. Therefore, we can  write $\tau_\alpha\circ\p(z_j^m)N_{i+1}'$ uniquely as $$\tau_\alpha\circ\p(z_j^m)N_{i+1}'=\lambda_{1j}\bar z_1+\dots +\lambda_{k_i j}\bar z_{k_i}\in \dfrac{N_i'}{N_{i+1}'}$$ with $\lambda_{l j}\in\Z$.

From the commutativity of the above diagram, $M^i_{\tau_\alpha\circ\p}(me_j)=(\lambda_{1j},\dots,\lambda_{k_i j})$ for all $j=1,\dots, k_i$, hence
$$M^i_{\tau_\alpha\circ\p}=\frac{1}{m}
\begin{pmatrix}
\lambda_{11} & \dots & \lambda_{1 k_i}\\
\vdots & \ddots & \vdots\\
\lambda_{k_i 1} & \dots & \lambda_{k_i k_i}
\end{pmatrix}.$$

By the above reasoning, we conclude with the following
\begin{cor}
Let $f$ be a \sf on an \infra manifold $\R^h/\PI$ inducing an endomorphism $\p$ on $\Pi$. Suppose that $K'\nor_f\PI$ is a net, normal and finite index subgroup of $\PI$. 
Then
$$N(f)=\dfrac{1}{[\PI:K']}\sum_{\bar\alpha\in\PI/K'}\prod_{i=0}^c \,\di{M^i_{\tau_\alpha\circ\p}}$$
with $M^i_{\tau_\alpha\circ\p}$ as defined above.
\end{cor}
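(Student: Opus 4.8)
The plan is to derive this final corollary as an immediate consequence of Theorem~\ref{prodni} together with the explicit computation of the matrices $M^i_{\tau_\alpha\circ\p}$ carried out in the paragraphs preceding the statement. First I would invoke the remark right after Theorem~\ref{prodni}: given the net normal finite index subgroup $K'\nor_f\PI$ with $[\PI:K']=m$, the subgroup $K:=\langle\{g^m\mid g\in\PI\}\rangle$ is a fully invariant, finite index subgroup of $\PI$ contained in $K'$, and since $K'$ is net and $K\f K'$ is fully invariant, $K$ is net as well. Thus the hypotheses of Theorem~\ref{prodni} are met with this particular choice of $K$, and that theorem gives
$$N(f)=\dfrac{1}{[\PI:K']}\sum_{\bar\alpha\in\PI/K'}\prod_{i=0}^c \,\di{A_i(\alpha)F_i},$$
where $\{\Lambda_i,A_i\}$ is the linearisation corresponding to $K\nor_f\PI$ and $\{F_0,\dots,F_c\}$ is the linearisation of $\p$ with respect to $K$. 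Note also that the index set of the outer sum is $\PI/K'$, which matches the statement, so no further bookkeeping about cosets is needed there.

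Next I would identify $\di{A_i(\alpha)F_i}$ with $\di{M^i_{\tau_\alpha\circ\p}}$. This is exactly the content of the commutative diagram displayed just before the corollary: $N_i/N_{i+1}$ embeds as a finite index subgroup of $N'_i/N'_{i+1}\cong\Z^{k_i}$, and $M^i_{\tau_\alpha\circ\p}\in\Q^{k_i\times k_i}$ is the (rational) matrix through which $A_i(\alpha)F_i$ extends to $\Q^{k_i}$. Since a finite-index inclusion $V\hookrightarrow W$ of free abelian groups and a compatible endomorphism pair $(g,\tilde g)$ satisfy $\de{g}=\de{\tilde g}$ (because passing to $V\otimes\Q=W\otimes\Q$ does not change the characteristic polynomial of the extended map, and $I-g$, $I-\tilde g$ have the same determinant up to this rational identification), we get $\di{A_i(\alpha)F_i}=\di{M^i_{\tau_\alpha\circ\p}}$ for every $i$ and every $\alpha$. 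Substituting this equality into the product formula above yields precisely
$$N(f)=\dfrac{1}{[\PI:K']}\sum_{\bar\alpha\in\PI/K'}\prod_{i=0}^c \,\di{M^i_{\tau_\alpha\circ\p}}.$$

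Finally I would add a sentence confirming that $M^i_{\tau_\alpha\circ\p}$ is computable purely from $K'$ and $\p$ — that is the whole point of the statement. The explicit formula
$$M^i_{\tau_\alpha\circ\p}=\frac{1}{m}
\begin{pmatrix}
\lambda_{11} & \dots & \lambda_{1 k_i}\\
\vdots & \ddots & \vdots\\
\lambda_{k_i 1} & \dots & \lambda_{k_i k_i}
\end{pmatrix}$$
was already derived, where the $\lambda_{lj}\in\Z$ are read off from $\tau_\alpha\circ\p(z_j^m)N'_{i+1}=\sum_l \lambda_{lj}\bar z_l$ in $N'_i/N'_{i+1}$, and $z_1,\dots,z_{k_i}\in N'_i$ are chosen so that $\bar z_1,\dots,\bar z_{k_i}$ generate $N'_i/N'_{i+1}$. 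One only needs the $N'_i$ (hence only $K'$) and the ability to evaluate $\p$ (hence the inner twist $\tau_\alpha$ for coset representatives $\alpha$) to write these matrices down.

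I do not expect a genuine obstacle here: the corollary is essentially a repackaging of Theorem~\ref{prodni}. The only point requiring a word of care is the determinant invariance under the finite-index change of lattice, $\di{A_i(\alpha)F_i}=\di{M^i_{\tau_\alpha\circ\p}}$ — but this was already asserted in the text ("Moreover, $\de{A_i(\alpha)F_i}=\de{M^i_{\tau_\alpha\circ\p}}$") and follows from the fact that conjugate-over-$\Q$ matrices, or a matrix and its restriction to a finite-index sublattice, share the same characteristic polynomial. So the proof is short: quote the remark to get a valid $K$, apply Theorem~\ref{prodni}, and replace each factor using the diagram.
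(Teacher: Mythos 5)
Your proposal is correct and follows essentially the same route as the paper: the corollary is obtained by choosing $K=\langle\{g^m\mid g\in\PI\}\rangle$ with $m=[\PI:K']$, applying Theorem~\ref{prodni}, and replacing each factor via the equality $\de{A_i(\alpha)F_i}=\de{M^i_{\tau_\alpha\circ\p}}$ coming from the finite-index embedding $N_i/N_{i+1}\hookrightarrow N'_i/N'_{i+1}$. This is exactly the reasoning the paper gives in the paragraphs preceding the statement.
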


\section{Infra-solvmanifolds of type (R)}
In this section, we show how to deduce the known averaging formula \cite[Theorem 4.3]{ll09-1} for Nielsen numbers on infra-solvmanifolds of type (R) (and so also on infra-nilmanifolds) from our general formula.

A simply connected solvable Lie group $G$ is said to be of type (R) if for every $X\in \Lg$, the corresponding Lie algebra of $G$,
the inner derivation $\ad(X)$ only has real eigenvalues.

The affine group $\Aff(G)$ of a solvable Lie group $G$ is the semidirect product $\Aff(G)=G\rtimes \Aut{G}$. 
It embeds naturally in the semigroup $\aff(G)=G\rtimes\End{G}$ consisting of all pairs $(d,D)$ with $d\in G$ and $D\in \End{G}$ an endomorphism of $G$.
The product in $\aff(G)$ (and $\Aff(G)$) is given by $(d,D)(e,E)= (dD(e), DE)$. 
Both $\aff(G)$ and $\Aff(G)$ act on $G$ via $(d,D)\cdot g = d D(g)$.

\begin{definition} \label{typeR}
An \emph{infra-solvmanifold of type (R)} is a quotient manifold of the form $G/\Pi$ where $G$ is a simply connected solvable Lie group of type (R),
$\Pi\subseteq \Aff(G)$ is a torsion free subgroup of the affine group of $G$ such that $\Gamma=G\cap \Pi$ is of finite index in $\Pi$ and 
$\Gamma$ is a discrete and cocompact subgoup of $G$.  
The finite quotient $\Psi=\Pi/\Gamma$ is called the \emph{holonomy group} of $\Pi$.
\end{definition}

\begin{remark}
We can view $G$ as a normal subgroup of $\Aff(G)$, so that it makes sense to talk about the intersection $G\cap \Pi$. It is easy to see that $\Gamma=G \cap \Pi=\{(a,A)\in \Pi\mid A=I\}$ and $\Psi\cong\{ A\in \Aut{G}\mid \exists a \in G:\; (a,A)\in \Pi\}$.
\end{remark}

\begin{remark}
Definition~\ref{typeR} implies that $\Pi$ is acting properly discontinuously and cocompactly on $G$ so that $G/\Pi$ is indeed an infra-solvmanifold. 
\end{remark}

For the rest of this section we assume that $G$, $\Pi$, $\Gamma$ and $\Psi$ are as in the definition above.

The group $\Gamma$ is net (see e.g.\ \cite[Corollary 3.11]{deki95-1}) and hence the manifold $G/\Gamma$ is an \NR solvmanifold

Consider the subgroup $K$ of $\Pi$ which is generated by all elements of the form $(a,A)^m$, where $(a,A)\in \Pi$ and $m=|\Psi|=[\Pi:\Gamma]$. Then $K\nor_f \Gamma$ is a fully invariant subgroup of finite index in $\Pi$.
Note that $K$, being a finite index subgroup of $\Gamma$, is also a discrete and cocompact subgroup of $G$. Hence any endomorphism (resp.\ automorphism) $\varphi$ of $K$ extends uniquely to an endomorphism (resp.~automorphism) $\tilde{\varphi}$ of $G$ (see e.g.~\cite{gorb75-1}).

Let $f:G/\Pi\to G/\Pi$ be a self-map.
Assume that $f$ induces the morphism $\varphi:\Pi\to \Pi$. Then there exists a lift $\tilde{f}:G\to G$ of $f$ such that 
\[ \forall g\in \Pi:\; \varphi(g) \circ \tilde{f} = 
\tilde{f} \circ g.\]
By \cite[Theorem 2.2]{ll09-1} there exists $(d,D)\in \aff(G)$ satisfying 
\begin{equation}\label{leelee}
 \forall g\in \Pi:\; \varphi(g) \circ (d,D) = 
(d,D) \circ g.\end{equation}
Although we will not really need this fact, we want to mention that this implies that $(d,D)$ induces a map
\[
 \overline{(d,D)}:G/\Pi \to G/\Pi:
[ g ]  \to [(d,D)\cdot g],
\]
(where $[g]=\Pi \cdot g$ denotes the orbit of $g$ under the action of $\Pi$)
which is homotopic to $f$, and so $N(f)=N(\overline{(d,D)})$.

Let $\varphi'$ denote the restriction of $\varphi$ to $K$, so $\varphi(\gamma,1) = (\varphi'(\gamma), 1)$ (where we identify the element $g\in G$ with the element $(g,1) \in G\rtimes \Aut{G}=\Aff(G)$).
From \eqref{leelee} we find that for all $\gamma \in K$:
\[ \varphi(\gamma,1) (d,D)= (d, D) (\gamma, 1) 
\Rightarrow (\varphi'(\gamma) d , D) = 
(d D(\gamma), D) \Rightarrow
\varphi'(\gamma) = d D(\gamma) d^{-1}. \] 
Let us denote the unique extension of $\varphi'$ to 
$G$ by $\tilde{\varphi}$, then 
obviously $\tilde{\varphi}= \mu(d)D$, where $\mu(d)$ denotes conjugation with $d$.

Now, let $\{F_0, F_1, \ldots, F_c\}$ be the linearisation of $\varphi$ with respect to $K$ and let 
$\{\Lambda_i, A_i\}$ be the linearisation corresponding to $K\nor_f \Pi$. Theorem~\ref{prodni} implies that 
\[N(f) = \frac{1}{|\Psi|}\sum_{\overline{\alpha}\in \Psi} \prod_{i=0}^c \di{ A_i(\alpha ) F_i}. \]

In the formula above $A_i(\alpha) F_i$ is the endomorphism on a free abelian factor $\Z^{k_i}$ (which is either of the form $K/N$ in case $i=0$ or 
of the form $N_i/N_{i+1}$ when $i\in \{1,2,\ldots, c\}$) and is induced by the endomorphism $\mu(\alpha) \circ \varphi'$.

Let us denote the eigenvalues of $A_i(\alpha) F_i $ by $\mu_{i,1}, \mu_{i,2}, \ldots, \mu_{i,k_i}$ (where we list each eigenvalue as many times as its multiplicity). Then 
\[ \prod_{i=0}^c \di{ A_i(\alpha ) F_i} = \prod_{i=0}^c \prod_{j=1}^{k_i} | 1 -  \mu_{i,j}|.\] 

Assume that $\alpha= (a,A) \in \Aff(G)$, then we have for all $\gamma \in K$:
\[ (\mu(\alpha)(\gamma),1) = (a, A) (\gamma,1) (a,A)^{-1} = (a A(\gamma) a^{-1}, 1).\]
From this it follows that the unique extension of $\mu(\alpha)$ to the Lie group $G$ equals $\mu(a) \circ A$ and 
combining this with what we already knew for $\varphi'$ we know that the unique extension of $\mu(\alpha) \circ \varphi$ to $G$ is 
$\mu(a) \circ A \circ \mu(d) \circ D$.
Using $\beta_\ast:\Lg \to \Lg $ to denote the differential of an endomorphism $\beta:G\to G$, the 
collection of eigenvalues $\mu_{i,j}$ is exactly the collection of the eigenvalues of the linear map 
$(\mu(a) \circ A \circ \mu(d) \circ D)_\ast$ on $\Lg$. (This was proven in detail in \cite[Lemma 3.2 and Proposition 3.9]{deki95-1} in case 
$\mu(\alpha) \circ \varphi'$ is an automorphism, but the proof works for endomorphisms too.) 
So it follows that (using that $\mu(x)_\ast =\Ad(x)$ for all $x\in G$):
\[ \prod_{i=0}^c \di{A_i(\alpha ) F_i} = \prod_{i=0}^c \prod_{j=1}^{k_i} | 1 -  \mu_{i,j}|=
\di{\Ad(a)\, A_\ast\, \Ad(d)\,  D_\ast}.\] 
In \cite[Theorem 1]{hl16-1} it was shown that for any endormorphism $B:G\to G$ and any $x\in G$ the equality \[ \det (I - \Ad(x)\, B_\ast ) = 
\det(I - B_\ast) \]
holds. We use this and the fact that $A_\ast$ is invertible (since $A\in \Aut{G}$) 
to rewrite $\det(I - \Ad(a)\, A_\ast\, \Ad(d)\,  D_\ast)$:
\begin{align*}
    \det(I - \Ad(a)\, A_\ast\, \Ad(d)\,  D_\ast)\  & = 
\ \det(I - A_\ast\, \Ad(d)\,  D_\ast) \\
& =  \ \det( I - \Ad(d)\,  D_\ast\, A_\ast) \\
& =  \ \det( I -  D_\ast\, A_\ast) \\
& =  \ \det( I -  A_\ast\, D_\ast )
\end{align*}
We conclude that 
\[ N(f)= \frac{1}{|\Psi|}\sum_{A\in \Psi}\di{A_\ast\, D_\ast}.\]

Note that $\lvert\det(A_\ast)\rvert=1$ as $A$ is an automorphism of finite order, hence $\lvert\det(I-A_\ast D_\ast)|= 
\lvert\det(A_\ast^{-1} - D_\ast)| $. This allows us to rewrite the formula above also as 
\[ N(f)= \frac{1}{|\Psi|}\sum_{A\in \Psi}\lvert \det(A_\ast -  D_\ast)\rvert\]
which is exactly the same formula as in \cite[Theorem 4.3]{ll09-1}.
\section{Polynomial maps}\label{pol}

In this section we consider the situation in which the infra-solvmanifold is represented as 
a special kind of polynomial manifold $M=\R^h/\Pi$, which we call of canonical type (see \cite{dil94-1,di96-1}).
As mentioned in \cite{dere18-1}, when $M$ is a polynomial manifold,  every \sf $f:M\to M$ is homotopic to a polynomial map $\bar p$. By this we mean a map $\bar p$ whose lift $p:\R^h \to \R^h$ to the universal covering of $M$ is a polynomial map.
In this section, we express the averaging formula found in Section~\ref{ni} in terms of the polynomial $p$.

\subsection{Canonical type representations}
First,
we fix the following notations. Let $\PI_*: 1=\PI_{n+1}\nor \PI_n\nor \cdots\nor \PI_1\nor_f \PI$ be a \tf filtration, and let  $\{\Lm i, \lm i\}$ be the corresponding linearisation.
Then $\Lm i\cong \Z^{k_i}$ for some $k_i\in\N$; fix an isomorphism $j_i: \Lm i \to \Z^{k_i}$. Via this isomorphism, 
$\lambda_i(\gamma) \in \Aut{\Z^{k_i}}=\GLZ{k_i}$.

We set $K_i:=k_1+\cdots+k_i$ for $i=1,\dots, n$, and $K_l:=0$ for $l\leq 0$. Note that $K_n=h$, the Hirsch length of $\PI$.
We further identify $\R^h$ with $\Rk 1\times \dots\times \Rk n$, so we write $x\in\R^h$ as $(x_1,\ldots, x_n)$ with $x_i\in\Rk i$. 
For $x\in\R^h$, we also use the notation $\x i:=(x_1,\dots,x_i)$ to denote the projection of $x$ to $\RK i$. For $L,l$ nonnegative integers, we let $\pp{L}{l}$ denote the set of polynomial maps from $\R^L$ to $\R^l$ and with P$(\R^L)$ we denote the group of polynomial diffeomorphisms 
of $\R^L$, see Subsection~\ref{polymani}.

\begin{theorem}[Dekimpe-Igodt {\cite[Theorem 4.1]{di96-1}}]\label{thmcan}
Let $\PI$ be a \tf \vp group with \tf filtration $\PI_*$. Then, with the notations introduced
above, there exists a representation $\rho:\PI\to\pd h$ satisfying the following properties:
\begin{enumerate}
\item For all $\gamma\in \Pi$ and for all  $i\in\{1,\dots,n\}$, there exists $q_i\in\pp{K_{i-1}}{k_i}$ such that 
$$ \rho(\gamma):\R^h\to \R^h: 
\left(
\begin{array}{c}
x_1\\ x_2\\ \vdots \\ x_n
\end{array}
\right) \mapsto
\left(
\arraycolsep=0.9pt
\begin{array}{rcl}
\lambda_1(\gamma)x_1& + &q_1\\
\lambda_2(\gamma)x_2& + &q_2(x_1)\\
&\vdots & \\
\lambda_n(\gamma)x_n& + & q_n(x_1,x_2, \ldots, q_{n-1})
\end{array}
\right)
.$$

\item \label{2} Moreover if $\gamma\in \PI_i$ and $j_i(\gamma\PI_{i+1})=z\in \Z^{k_i}$
then the above form specialises to 
$$ \rho(\gamma):\R^h\to \R^h: 
\left(
\begin{array}{c}
x_1\\ x_2\\ \vdots \\ x_n
\end{array}
\right) \mapsto
\left(\arraycolsep=0.9pt
\begin{array}{rcl}
& x_1&  \\
&  \vdots&  \\
& \makebox(0,0){$x_{i-1}$} & \\
 x_{i} & + & z \\
\lambda_{i+1}(\gamma) (x_{i+1}) & + & q_{i+1} (x_1, \ldots , x_i) \\ 
&\vdots & \\
\lambda_n(\gamma)x_n& + & q_n(x_1,x_2,\ldots, q_{n-1})
\end{array}
\right)
.$$
\item Via the representation $\rho$, the group $\Pi$ acts properly discontinuously and cocompactly on $\R^h$.
\end{enumerate}
\end{theorem}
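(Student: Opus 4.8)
The plan is to build $\rho$ by induction on the length $n$ of the filtration, constructing the components $x_i \mapsto \lambda_i(\gamma) x_i + q_i(x_1,\dots,x_{i-1})$ one level at a time, so that at stage $i$ we have a polynomial action of $\PI/\PI_{i+1}$ on $\RK i$ compatible with the quotient $\PI/\PI_i$. The base case is the action of the finite-by-abelian group $\PI/\PI_2$ on $\Rk 1 \cong \R^{k_1}$: since $\PI_1/\PI_2 = \Lm 1$ is \tf abelian and $\PI/\PI_1$ is finite, $\PI/\PI_2$ is a (torsion-free by the choice of $\PI$) virtually abelian group, i.e.\ a crystallographic or almost-crystallographic group in dimension $k_1$, and the standard affine action $\gamma \cdot x_1 = \lambda_1(\gamma) x_1 + q_1(\gamma)$ (with $q_1(\gamma)$ a translation vector, a degree-zero polynomial) is the required properly discontinuous cocompact action. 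For the inductive step, assume $\rho^{(i-1)}:\PI/\PI_i \to \pd{K_{i-1}}$ has been built satisfying (1)--(3) in the obvious truncated sense; we must extend it to $\PI/\PI_{i+1}$ acting on $\RK i = \RK{i-1}\times \Rk i$.

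The heart of the inductive step is to produce the polynomial map $q_i$. We first extend the $\PI/\PI_i$-action on $\RK{i-1}$ to the extra coordinate block $\Rk i$ using the linear action $\lambda_i$; the question is whether the extension $\PI_i/\PI_{i+1} = \Lm i$ to $\PI/\PI_{i+1}$ can be realized polynomially. Here I would invoke the cohomological obstruction picture for polynomial crystallographic actions from \cite{di96-1,dil94-1}: the extension $1\to \Lm i \to \PI/\PI_{i+1} \to \PI/\PI_i \to 1$ gives a class in $H^2(\PI/\PI_i; \Lm i)$ (with the $\lambda_i$-twisted module structure), and the content of the Dekimpe--Igodt machinery is that, because $\PI/\PI_i$ already acts on $\RK{i-1}$ with the prescribed polynomial form and because $\Lm i$ is \tf abelian, this cohomology class can be represented by a polynomial cocycle $q_i\in \pp{K_{i-1}}{k_i}$, i.e.\ one can choose a set-theoretic section whose "translational part" in the $x_i$-coordinate is polynomial in $(x_1,\dots,x_{i-1})$. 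Assembling $\rho(\gamma)$ block-by-block in this way gives the triangular polynomial form in (1); a polynomial map of this lower-triangular shape is automatically a polynomial diffeomorphism of $\R^h$ (its inverse is again lower-triangular polynomial, solved recursively), so $\rho(\PI)\subseteq \pd h$.

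Property (2) is then essentially bookkeeping: if $\gamma\in\PI_i$ then $\lambda_j(\gamma)=\mathrm{id}$ for all $j<i$ (since $\gamma$ acts trivially on each lower factor $\PI_j/\PI_{j+1}$ for $j<i$ by definition of the conjugation actions) and the relevant translational/polynomial parts $q_1,\dots,q_{i-1}$ vanish, leaving the identity on $x_1,\dots,x_{i-1}$, the pure translation $x_i \mapsto x_i + z$ on the $i$-th block, and the inductively given form on the higher blocks; one only needs to check this choice can be made consistently with the cocycle $q_i$ chosen above, which is built into the normalization of the section. Finally, (3) --- proper discontinuity and cocompactness of the full action --- follows by induction on the length of the filtration: the action on $\RK i$ fibers $\PI/\PI_{i+1}$-equivariantly over the properly discontinuous cocompact action of $\PI/\PI_i$ on $\RK{i-1}$, with fibers $\Rk i$ on which the lattice $\Lm i \cong \Z^{k_i}$ acts cocompactly and properly by (affine-type) translations; proper discontinuity and cocompactness are inherited by such fibered actions over a compact base with compact fiber-quotient. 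The main obstacle I expect is precisely the realizability of the cohomology class by a \emph{polynomial} cocycle --- showing that the recursive choices of $q_1,\dots,q_{i-1}$ do not obstruct a polynomial choice of $q_i$ --- and this is where one must lean on the structural results of \cite{di96-1}; everything else is induction and the closure of lower-triangular polynomial maps under composition and inversion.
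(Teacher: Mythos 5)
This theorem is not proved in the paper at all: it is quoted verbatim from Dekimpe--Igodt \cite[Theorem 4.1]{di96-1}, so there is no in-paper argument to compare your proposal against. Judged on its own merits, your sketch does reproduce the broad strategy of the cited source --- induction along the filtration, extending a canonical type action of $\PI/\PI_i$ on $\RK{i-1}$ to one of $\PI/\PI_{i+1}$ on $\RK{i}$ by adjoining a block $\Rk{i}$ on which $\Lm{i}$ acts by translations, with the new polynomial part $q_i$ produced cohomologically. However, the step you isolate as ``the main obstacle'' is in fact the entire content of the theorem: one must show that the extension class of $1\to\Lm{i}\to \PI/\PI_{i+1}\to\PI/\PI_i\to 1$ in $H^2(\PI/\PI_i,\Z^{k_i})$ maps to a class in $H^2(\PI/\PI_i,\pp{K_{i-1}}{k_i})$ representable by a cocycle that is polynomial \emph{of uniformly bounded degree}, and this rests on a nontrivial comparison theorem for group cohomology with coefficients in modules of polynomial maps (roughly, $H^{\ast}(Q,\pp{K}{k})\cong H^{\ast}(Q,\R^k)$, together with explicit degree control along the induction). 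Invoking ``the Dekimpe--Igodt machinery'' at exactly this point makes the proposal circular as a proof, though it is acceptable as a description of how the cited proof is organised. Two smaller issues: $\PI/\PI_2$ need not be torsion free (quotients of torsion free groups rarely are), so the base case is an almost-crystallographic rather than crystallographic action --- harmless, since proper discontinuity does not require freeness, but your parenthetical claim is wrong; and property (2) is not pure bookkeeping --- that $q_1,\dots,q_{i-1}$ can be normalised to vanish on $\PI_i$ and that the $i$-th component is exactly $x_i+z$ is something the inductive construction must arrange, which is precisely why it appears as a separate clause of the theorem rather than as a consequence of clause (1).
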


A representation $\rho:\PI\to\pd h$ satisfying the above properties is called a \emph{canonical type} polynomial representation with respect to $\PI_\ast$. When $\rho$ is clear from the context, we also write $^\gamma x$ instead of $\rho(\gamma)(x)$.

We next analyse the structure of polynomial maps inducing $\p$ on $\PI$.

\begin{lemma}\label{vormp}
Let $\PI$ be a \tf \vp group with \tf filtration $\PI_*$, and let $\rho:\PI\to\pd h$ be a representation of canonical type with respect to $\PI_*$. 
Let $\p:\PI\to\PI$ be an endomorphism leaving each $\PI_i$ invariant; 
denote the induced endomorphisms on $\Lm i\cong \Z^{k_i}$ by a $k_i\times k_i$ integral matrix $G_i$.
Suppose that $p\in\pp{h}{h}$ satisfies $p\circ \rho(\gamma)=\rho(\p(\gamma))\circ p$ for every $\gamma\in\PI$.
Then, for every $i\in\{1,\dots, n\}$, there exist $p_i\in\pp{K_{i-1}}{k_i}$ such that 
\begin{align*}
{p(x)}_i
&=G_i x_i + p_i(x_1,\dots, x_{i-1})\\
\intertext{for every $x\in\R^h$.
Put differently,}
p\vc{x_1}{x_n}
&=\left(\arraycolsep=0.9pt \begin{array}{rcl}
G_1 x_1& + & p_1\\
G_2 x_2 & + & p_2(x_1)\\
& \vdots & \\
G_n x_n & + & p_n(x_1,\dots, x_{n-1})
\end{array}\right)
\end{align*}
for all $x\in\R^h$.
\end{lemma}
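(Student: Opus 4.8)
The plan is to induct on $i\in\{1,\dots,n\}$, using the canonical type form of $\rho$ from Theorem~\ref{thmcan} and the structure of the intertwining relation $p\circ\rho(\gamma)=\rho(\p(\gamma))\circ p$. The key observation is that for $\gamma\in\PI_i$, the map $\rho(\gamma)$ fixes the first $i-1$ coordinates and translates the $i$-th coordinate by a lattice vector $z=j_i(\gamma\PI_{i+1})$, as in item~(\ref{2}) of Theorem~\ref{thmcan}. Since $\p$ leaves $\PI_i$ invariant and induces $G_i$ on $\Lm i$, we have $j_i(\p(\gamma)\PI_{i+1})=G_i z$, so $\rho(\p(\gamma))$ translates its own $i$-th coordinate by $G_i z$ while fixing coordinates $1,\dots,i-1$.

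First I would set up the induction. Write $p(x)=(p(x)_1,\dots,p(x)_n)$ with $p(x)_i\in\Rk i$, each $p(x)_i$ a polynomial in $x_1,\dots,x_n$; the goal is to show $p(x)_i$ depends only on $x_1,\dots,x_i$ and is affine in $x_i$ with linear part $G_i$. For the base case and inductive step simultaneously: fix $i$ and take $\gamma\in\PI_i$ arbitrary with $z=j_i(\gamma\PI_{i+1})\in\Z^{k_i}$. Comparing the $i$-th coordinate of $p(\rho(\gamma)(x))$ and $\rho(\p(\gamma))(p(x))$: the left side is $p_i$ evaluated at $\rho(\gamma)(x)$, whose first $i-1$ coordinates equal $x_1,\dots,x_{i-1}$ and whose $i$-th coordinate is $x_i+z$ (higher coordinates are irrelevant if we already know, inductively, that $p(x)_i$ does not involve $x_{i+1},\dots,x_n$ — but we need this for the current $i$ too, so I would instead argue in increasing order of coordinate and extract dependence step by step). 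The right side's $i$-th coordinate is $G_i\cdot p(x)_i + (\text{translation part of }\rho(\p(\gamma))\text{ in coordinate }i)$, but since $\gamma\in\PI_i$ this translation part involves only $p(x)_1,\dots,p(x)_{i-1}$ which by the inductive hypothesis depend only on $x_1,\dots,x_{i-1}$.

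The core functional-equation step is then: knowing that $q(x_i) := p(x)_i$ (with $x_1,\dots,x_{i-1}$ fixed, and using the inductive hypothesis to see $p(x)_i$ is independent of $x_{i+1},\dots,x_n$ — which I would establish first by applying the relation to generators $\gamma\in\PI_j$ for $j>i$) satisfies $q(x_i+z) = G_i\, q(x_i) + (\text{const in }x_i)$ for all $z$ in the lattice $j_i(\PI_i/\PI_{i+1})\cong\Z^{k_i}$, together with $q$ being polynomial, forces $q$ to be affine with linear part $G_i$. This is a standard fact: a polynomial $q:\R^{k_i}\to\R^{k_i}$ with $q(x+z)-G_i q(x)$ independent of $x$ for all $z$ in a full-rank lattice must be affine — taking differences $q(x+z)-q(x)$ shows this is a polynomial of degree $\deg q - 1$ that is also (by the relation) $(G_i - I)q(x) + \text{const}$; comparing degrees and iterating, or directly differentiating the relation twice in $x$ and using that $z$ ranges over a Zariski-dense set, kills all higher-order terms. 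Writing $p_i(x_1,\dots,x_{i-1})$ for the resulting constant term gives the claimed form, and polynomiality of $p_i$ follows since it is obtained from $p$ by evaluation.

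The main obstacle I anticipate is the bookkeeping in separating the two independent claims — that $p(x)_i$ does not depend on $x_{i+1},\dots,x_n$, and that it is affine in $x_i$ with linear part exactly $G_i$ — and in carefully tracking that the "constant" part on the right-hand side of the functional equation really is a function of $x_1,\dots,x_{i-1}$ only (this uses both the block-triangular form of $\rho(\p(\gamma))$ for $\gamma\in\PI_i$ and the already-established part of the inductive hypothesis for indices $<i$). Once the reduction to the polynomial functional equation $q(x_i+z)=G_i q(x_i)+c$ is clean, the rest is routine.
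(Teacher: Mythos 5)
Your overall architecture matches the paper's proof: first show $p(x)_i$ is independent of $x_{i+1},\dots,x_n$ by applying the intertwining relation to $\gamma\in\Pi_j$ for $j>i$ and using that a polynomial periodic along a full-rank lattice in $x_j$ is independent of $x_j$; then extract the form in $x_i$ from a functional equation coming from $\gamma\in\Pi_i$. The first half is fine. The second half, however, contains a genuine error in the derivation of the functional equation. For $\gamma\in\Pi_i$ with $z=j_i(\gamma\Pi_{i+1})$, Theorem~\ref{thmcan}(\ref{2}) applied to $\varphi(\gamma)\in\Pi_i$ says that $\rho(\varphi(\gamma))$ acts on the $i$-th coordinate as a \emph{pure translation} $y_i\mapsto y_i+G_iz$ (the matrix $G_i$ enters only through the translation vector $j_i(\varphi(\gamma)\Pi_{i+1})=G_iz$; the linear part of $\rho(\delta)$ in coordinate $i$ is the identity for any $\delta\in\Pi_i$, since $\Pi_i$ acts trivially on the abelian quotient $\Pi_i/\Pi_{i+1}$). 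The correct equation is therefore
\begin{equation*}
f_i(x_1,\dots,x_{i-1},x_i+z)=f_i(x_1,\dots,x_{i-1},x_i)+G_iz\qquad(z\in\Z^{k_i}),
\end{equation*}
which, after extending from $\Z^{k_i}$ to $\R^{k_i}$ by the polynomial-identity argument and setting $x_i=0$, gives the conclusion immediately. Your equation $q(x_i+z)=G_i\,q(x_i)+c$ is not what the relation yields: you have confused the matrix $G_i$ induced by $\varphi$ on $\Lambda_i$ with the linear action of $\rho(\varphi(\gamma))$ on the $i$-th coordinate block.

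This is not merely cosmetic, because the ``standard fact'' you invoke for your version of the equation is false as stated. If $q(x+z)-G_iq(x)$ is independent of $x$ for all lattice $z$, then taking $z=0$ gives $(I-G_i)q(x)=\mathrm{const}$, which forces $q$ to be \emph{constant} whenever $I-G_i$ is invertible, and when $G_i=I$ it forces $q$ affine but says nothing about its linear part being $G_i$. In the correct (additive) equation the linear part is pinned down precisely because the increment $q(x_i+z)-q(x_i)$ equals $G_iz$; in your version $G_i$ sits in the wrong place and that information is lost. Fixing the derivation of the functional equation repairs the proof and in fact makes the finishing step shorter than the degree-comparison argument you sketch.
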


\begin{proof}

We first prove that $p(x)_i$ does not depend on $x_j$ with $j>i$ by proving the following

\stepi{Observation. If $p(x)_i$ does not depend on $x_{j+1},\dots, x_n$ for some $j\in\{i+1,\dots, n\}$, then $p(x)_i$ does not depend on $x_{j}$ either.}

\medskip

By assumption, we can write $p(x)_i=f_i(x_1,x_2, \ldots, x_j)$ with $f_i\in \pp{K_j}{k_i}$.\\
Moreover, Theorem~\ref{thmcan}(\ref{2}) implies that for all $\gamma \in\PI_j$,
\begin{itemize}
    \item $\big(p(^\gamma x)\big)_i=\big({^{\varphi(\gamma)}p(x)\big)_i=p(x)}_i$ since also $\varphi(\gamma)\in \Pi_j$ and $i<j$;
    \item $({^\gamma x})_l=x_l$ for all $l<j$;
    \item $({^\gamma x})_j=x_j+z$, with $z$ corresponding to $\gamma\Pi_{j+i}$ in $\Z^{k_j}\cong  \frac{\Pi_j}{\Pi_{j+1}}$
\end{itemize}
Combined, these facts show that for all $\bar\gamma\in\PI_j/\PI_{j+1}$ and so for all $z \in \Z^{k_j}$,
\begin{align*}
f_i(x_1,\dots, x_{j-1}, x_j+ z)
&=
f_i(x_1,\dots, x_{j-1}, x_j).\\
\intertext{The above equality can be seen as a polynomial identity which is satisfied on $\Z^{k_j}$. Since the only $p\in\pp{L}{l}$ vanishing on $\Z^L$ is the zero polynomial, also }
f_i(x_1,\dots, x_{j-1}, x_j+ r)
&=
f_i(x_1,\dots, x_{j-1}, x_j)
\end{align*}
for every $r\in\Rk j$. We conclude that $p(x)_i$ does not depend on $x_j$.

\medskip

So we already know that $p$ is of the form  %, 
$$p\vc{x_1}{x_n}=\left(\begin{array}{c}
f_1(x_1)\\
f_2(x_1,x_2)\\
\vdots\\
f_n(x_1,x_2,\ldots, x_n)
\end{array}\right),$$
with $f_i\in \pp{K_i}{k_i}$.
We now show that the $f_i$ have the prescribed form of the lemma. 

Take $i\in\{1,\dots, n\}$ and $x\in\R^h$. Define $x'\in\R^h$  by setting $x'_l=x_l$ if $l<i$ and $x'_l:=0$ if $l\geq i$. 

Take $\gamma\in\PI_i$, with $\gamma\PI_{i+1}=z$
in $\PI_i/\PI_{i+1}\cong \Z^{k_i}$.
Then ${\p(\gamma)}\PI_{i+1}=G_iz$ in $\PI_i/\PI_{i+1}\cong \Z^{k_i}$. 
So
\begin{align*}
\big(p(\,{^\gamma x'})\big)_i
&=f_i(\,\xx{({^\gamma x'})}{i})\\
&=f_i(x_1,\dots, x_{i-1}, z),\\
\intertext{while}
\big({^{\p(\gamma)}p(x')}\big)_i
&=p(x')\,_i+G_iz\\
&=f_i(x_1,\dots, x_{i-1},0)+G_iz.
\end{align*}
As $p(\,{^\gamma x'})={^{\p(\gamma)}p(x')}$ for any $\gamma\in\PI_i$, the polynomial relation
$$f_i(x_1,\dots, x_{i-1}, z)=f_i(x_1,\dots, x_{i-1},0)+G_iz$$
is satisfied for any $z\in\Z^{k_i}$.
Hence it is satisfied for  any $z\in\Rk i$. 
The proof now finishes by taking $z:=x_i$ and defining $p_i(r_1,\dots, r_{i-1}):=f_i(r_1,\dots, r_{i-1},0)$ for $(r_1,\dots, r_{i-1})\in\RK{i-1}$. 
\end{proof}

\subsection{Averaging formula}

Let $\PI$ be a \tf \vp group of Hirsch length $h$. 
Consider a canonical type polynomial representation $\rho:\Pi\to \PP$.
Then we say that the corresponding quotient space $M=\R^h/\Pi$ is a \emph{canonical type polynomial manifold} realising the infra-solvmanifold with fundamental group $\Pi$.

\begin{theorem}[Der\'e {\cite[Corollary~6.1]{dere18-1}}]
Let $\PI$ be a \tf \vp group of Hirsch length $h$ and let $M=\R^h/\Pi$ be a canonical type polynomial manifold realising the 
infra-solvmanifold with fundamental group $\Pi$ via a representation $\rho:\Pi\to \PP$. Let $f:M\to M$ be a \sf inducing an endomorphism $\p$ on $\PI$.
Then there exists a polynomial map $p:\R^h\to\R^h$ such that
$p\circ\rho(\gamma)=\rho(\p(\gamma))\circ p$ for every $\gamma$ in $\PI$.
\end{theorem}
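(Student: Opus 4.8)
\medskip\noindent\emph{Proof strategy.}
Since $M=\R^h/\PI$ is aspherical, it will suffice to produce \emph{any} polynomial map $p\colon\R^h\to\R^h$ with $p\circ\rho(\gamma)=\rho(\p(\gamma))\circ p$ for all $\gamma\in\PI$: the induced self-map $\ov p$ of $M$ then realises $\p$ on the fundamental group, hence is homotopic to $f$. We may assume that $\p$ leaves each term $\PI_i$ of the underlying \tf filtration $\PI_*$ invariant (automatic when the $\PI_i$ are characteristic, as for the filtrations of Section~\ref{ni}, and the general case reduces to it). By Lemma~\ref{vormp}, such a $p$ is forced to be block lower triangular: its $i$-th coordinate block must read $x\mapsto G_ix_i+p_i(x_1,\dots,x_{i-1})$, where $G_i$ is the integral matrix induced by $\p$ on $\Lm i\cong\Z^{k_i}$ and $p_i\in\pp{K_{i-1}}{k_i}$ is still free. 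So the plan is to construct $p_1,\dots,p_n$ recursively, writing $\ov p^{\,(i)}\colon\RK i\to\RK i$ for the polynomial map built from $p_1,\dots,p_i$ and $\ov\rho_i$ for the action that $\rho$ induces on $\RK i=\Rk 1\times\cdots\times\Rk i$.

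\medskip
For the recursion step, suppose $p_1,\dots,p_{i-1}$ have been found, so that $\ov p^{\,(i-1)}$ intertwines $\ov\rho_{i-1}(\gamma)$ and $\ov\rho_{i-1}(\p(\gamma))$. Substituting the ansatz into the $i$-th coordinate block of $p\circ\rho(\gamma)=\rho(\p(\gamma))\circ p$ and using $G_i\lm i(\gamma)=\lm i(\p(\gamma))G_i$ (valid since $\p$ is a homomorphism), the parts linear in $x_i$ cancel and the requirement on $p_i$ collapses to the twisted $1$-cocycle equation
\[
p_i\bigl(\ov\rho_{i-1}(\gamma)(y)\bigr)=\lm i(\p(\gamma))\,p_i(y)+c_\gamma(y)\qquad(\gamma\in\PI,\ y\in\RK{i-1}),
\]
where $c_\gamma\in\pp{K_{i-1}}{k_i}$ is an explicit polynomial built from $\ov p^{\,(i-1)}$, from $G_i$, and from the polynomials $q_i$ of Theorem~\ref{thmcan}. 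Turning $P:=\pp{K_{i-1}}{k_i}$ into a $\PI$-module via $(\gamma\cdot q)(y):=\lm i(\p(\gamma))\,q\bigl(\ov\rho_{i-1}(\gamma)^{-1}y\bigr)$, a routine check (using that $\rho$ is a representation and $\p$ a homomorphism) shows $\gamma\mapsto-c_\gamma\circ\ov\rho_{i-1}(\gamma)^{-1}$ to be a $1$-cocycle of $\PI$ with values in $P$; a suitable $p_i$ then exists exactly when its class $[c_i]\in H^1(\PI;P)$ vanishes. For $i=1$ this is the base case, where $\RK 0$ is a point, so $p_1$ is a constant vector and the equation merely asks for an affine realisation of the endomorphism induced by $\p$ on the crystallographic group $\PI/\PI_2$ acting affinely on $\RK 1$; such a realisation, with linear part $G_1$, exists by \cite[Theorem~2.2]{ll09-1}.

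\medskip
It remains to see that $[c_i]=0$ for $i\geq2$, which is the step I expect to be the main obstacle. Two facts combine. First, a direct computation with Theorem~\ref{thmcan}(\ref{2}) shows $c_\gamma=0$ whenever $\gamma\in\PI_i$: there the block-$i$ translation part of $\rho(\gamma)$ is the constant $j_i(\gamma\PI_{i+1})$ and that of $\rho(\p(\gamma))$ is the constant $G_i\,j_i(\gamma\PI_{i+1})$, so the two contributions to $c_\gamma$ cancel. Since $\PI_i$ fixes $\RK{i-1}$ pointwise and acts trivially on $\Lm i$, it acts trivially on $P$; hence the cocycle inflates from $\PI/\PI_i$, and it is enough to prove $H^1\bigl(\PI/\PI_i;\,\pp{K_{i-1}}{k_i}\bigr)=0$. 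Second, I would establish this vanishing by peeling the filtration of $\PI/\PI_i$ through Lyndon--Hochschild--Serre spectral sequences: the top quotient $\PI/\PI_1$ is finite, hence invisible to cohomology with $\R$-coefficients, while each remaining factor $\PI_j/\PI_{j+1}\cong\Z^{k_j}$ acts -- again by Theorem~\ref{thmcan}(\ref{2}) -- on its own block $x_j$ of variables by pure translations and fixes $x_1,\dots,x_{j-1}$. Each stage thus reduces to the elementary fact that, for $\Z^k$ acting on $\R^k$ by translations, $H^{\geq1}\bigl(\Z^k;\,A\otimes W\bigr)=0$ whenever $A$ is the ring of polynomial functions on $\R^k$ and $W$ a finite-dimensional module -- the operator ``shift by a generator minus the identity'' being surjective on $A$ (discrete integration raises the degree by one) with kernel exactly the constants, and surviving any twist on $W$ by a dévissage along a composition series. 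The delicate point -- the heart of the matter -- is that the twist accumulated on $W$ at each peeling stage is not linear but \emph{polynomial} in the not-yet-integrated variables; one checks it is nevertheless unipotent over the translation and removes it by a further dévissage. This is essentially the polynomial-cohomology comparison underlying \cite{dil94-1,di96-1,dere18-1}. Granting it, $[c_i]=0$, the recursion closes, and $\ov p\simeq f$ by asphericity of $M$.
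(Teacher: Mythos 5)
The paper does not prove this statement: it is imported verbatim from Der\'e \cite[Corollary~6.1]{dere18-1}, so there is no internal proof to compare your argument against. Judged on its own, your outline follows the strategy actually used in \cite{dil94-1,di96-1,dere18-1}: force the block-triangular shape of $p$ via Lemma~\ref{vormp}, build the blocks $p_i$ inductively, and read the obstruction at each stage as a class in $H^1$ of a twisted polynomial module. The formal parts are sound: $G_i\lm i(\gamma)=\lm i(\p(\gamma))G_i$ does hold because $\p$ is a homomorphism preserving the filtration, the obstruction $\gamma\mapsto c_\gamma\circ\ov\rho_{i-1}(\gamma)^{-1}$ is a $1$-cocycle, it is inflated from $\PI/\PI_i$ for exactly the reason you give, and the base case reduces to $H^1$ of a finite group with real coefficients.

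Two steps, however, are asserted rather than proved, and they carry essentially all the weight. First, the reduction to a $\p$-invariant filtration: an arbitrary endomorphism need not preserve the $\PI_i$ underlying the given $\rho$, and without that Lemma~\ref{vormp} --- hence your ansatz --- is unavailable. Passing to a fully invariant filtration changes the representation, so transporting a solution back to the given $\rho$ requires the uniqueness of canonical type representations up to conjugation by a polynomial diffeomorphism, itself a main theorem of \cite{di96-1}. Second, the vanishing $H^1\bigl(\PI/\PI_i;\pp{K_{i-1}}{k_i}\bigr)=0$. This cannot be dismissed as ``finite group plus translations'': for a bare finite-dimensional module $W$ over $\Z$ one has $H^1(\Z;W)=\mathrm{coker}(B-I)$, which is generally nonzero, so the argument lives or dies on the fact that at every peeling stage the lattice $\PI_j/\PI_{j+1}$ translates a \emph{full} block of polynomial variables, and on controlling the module surviving as $H^0$ --- which, as you yourself note, carries a twist that is polynomial in the not-yet-integrated variables and mixes the tensor factors. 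Your one-sentence appeal to unipotence ``over the translation'' is precisely where the substantive work of \cite{di96-1,dere18-1} takes place; as written, the decisive step is deferred to the very references from which the theorem is quoted. In short: the road map is the right one, but the destination is reached by citation rather than by proof.
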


This result implies that $p$ induces a \sf $\bar p$ of $M=\R^n/\PI$ fitting in the commutative diagram (with $\R^h \to M=\R^h/\Pi$ the natural projection map):
\begin{center}
\begin{tikzcd}[ampersand replacement=\&]
\R^h \arrow{r}{p} \arrow{d}{}
              \&\R^h \arrow{d}{}\\
M \arrow{r}[swap]{\bar p} \&M,
\end{tikzcd}
\end{center}
and that $\bar p$ induces the endomorphism $\p$ on $\PI$. 
As both $\bar p$ and $f$ induce $\p$ on $\PI$ and $M$ is $K(\Pi,1)$, we conclude that $\bar p$ and $f$ are homotopic. Hence $N(f)=N(\bar p)$.
We will refer to the map $p$ as a \emph{polynomial homotopy lift} of $f$.

Combining Theorem~\ref{thmcan} and Lemma~\ref{vormp}, we can now simplify our averaging formula using this polynomial $p$. 
%When $g\in \pp{h}{h}$ is a polynomial map, we will use $J(g)_{x_0}$ to denote the Jacobian matrix of $g$ evaluated in the point $x_0\in \R^h$.

\begin{theorem}\label{aveforusingp} Let $\Pi$ be a torsion free polycyclic-by-finite group and $\Gamma \nor_f \Pi$ a net normal subgroup of finite index in $\Pi$. 
%Let $1\nor N_c \nor \cdots \nor N_2 \nor N_1=N \nor \Gamma \nor_f \Pi$ be the torsion free filtration of $\Pi$ with $N_1= \sqrt[\Gamma]{[\Gamma,\Gamma]}$ and $N_i = \sqrt[N]{\gamma_i(N)}$ ($i\geq 2$).
Let $\rho:\PI\to\pd h$ be a canonical type polynomial representation with respect to 
%this filtration. 
the filtration corresponding to $\Gamma\nor_f\Pi$; let $M=\R^h/\Pi$ be the corresponding canonical type polynomial manifold realising the infra-solvmanifold with fundamental group $\Pi$. 
Let $f:M\to M$ be a \sf of $M$ and let $p:\R^h\to \R^h$ be a homotopy lift of $f$.
\begin{enumerate}
    \item For any $\gamma\in \Pi$ it holds that $\det(I-J(\rho(\gamma)\circ p)_{x_0})$ is independent of the point $x_0\in \R^h$.
    
    \item The Nielsen number of $f$ equals $$N(f)=\dfrac{1}{[\PI:K]}\sum_{\bar \al\in\PI/K}\lvert\det(I-J(\rho(\alpha)\circ p)_{x_0})|.$$
\end{enumerate} Here we use $J(g)_{x_0}$ to denote the Jacobian matrix of a polynomial map $g$ evaluated in the point $x_0\in \R^h$.
\end{theorem}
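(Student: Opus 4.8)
The plan is to reduce Theorem~\ref{aveforusingp} to the averaging formula of Theorem~\ref{prodni} by identifying the Jacobian determinant $\det(I-J(\rho(\alpha)\circ p)_{x_0})$ with the product $\prod_{i=0}^c\det(I-A_i(\alpha)F_i)$ appearing there. Here $K\leq_f\Gamma$ is the fully invariant net subgroup $\langle\{g^m\mid g\in\Pi\}\rangle$ with $m=[\Pi:\Gamma]$, which we are free to choose as in the remark following Theorem~\ref{prodni}. The key structural input is Lemma~\ref{vormp}: writing $\p$ for the endomorphism induced by $f$ and $G_i$ for the integral matrix induced by $\p$ on $\Lambda_i\cong\Z^{k_i}$ (with respect to the filtration corresponding to $\Gamma\nor_f\Pi$), the homotopy lift $p$ has block-triangular form $p(x)_i=G_ix_i+p_i(x_1,\dots,x_{i-1})$. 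Consequently the Jacobian $J(p)_{x_0}$ is block lower-triangular with diagonal blocks exactly $G_1,\dots,G_n$, so $\det(I-J(p)_{x_0})=\prod_{i=1}^n\det(I-G_i)$, independent of $x_0$.

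First I would establish item (1). For a fixed $\gamma\in\Pi$, Theorem~\ref{thmcan}(1) shows $\rho(\gamma)$ is block lower-triangular with diagonal blocks $\lambda_i(\gamma)$; hence so is the composition $\rho(\gamma)\circ p$, with diagonal blocks $\lambda_i(\gamma)G_i$ (the off-diagonal blocks of the Jacobian depend on $x_0$ in general, but are irrelevant to the determinant of $I$ minus a triangular matrix). Therefore
\[
\det\bigl(I-J(\rho(\gamma)\circ p)_{x_0}\bigr)=\prod_{i=1}^n\det\bigl(I-\lambda_i(\gamma)G_i\bigr),
\]
which visibly does not depend on $x_0$. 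Next, I would match this with the linearisation data of Theorem~\ref{prodni}. The filtration corresponding to $\Gamma\nor_f\Pi$ is $1\nor N_s\nor\cdots\nor N_1\nor\Gamma\nor\Pi$, whose linearisation $\{\Lambda_i,A_i\}_{i=0,\dots,c}$ is exactly the collection $\{\Lambda_i,\lambda_i\}$ used to build $\rho$, after reindexing: $\Lambda_0=\Gamma/N$ corresponds to one diagonal block and $\Lambda_1,\dots,\Lambda_c$ to $N_1/N_2,\dots,N_c/N_{c+1}$. Likewise, the linearisation $\{F_0,\dots,F_c\}$ of $\p$ with respect to $K$ is, block by block, the same data as $\{G_i\}$ (on the rational vector spaces $\Lambda_i\otimes\Q$ the matrices agree; passing from $K$ to $\Gamma$ only conjugates by a change of lattice and does not affect determinants of the form $\det(I-\cdot)$). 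With this dictionary, $\det(I-\lambda_i(\gamma)G_i)=\det(I-A_i(\gamma)F_i)$ for each $i$, so taking $\gamma=\alpha$ and collecting absolute values gives
\[
\bigl|\det(I-J(\rho(\alpha)\circ p)_{x_0})\bigr|=\prod_{i=0}^c\di{A_i(\alpha)F_i},
\]
and item (2) follows by summing over $\bar\alpha\in\Pi/K$ and dividing by $[\Pi:K]$, invoking Theorem~\ref{prodni}.

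The main obstacle I anticipate is the careful bookkeeping in the second step: the filtration in Theorem~\ref{thmcan} (hence in Lemma~\ref{vormp}) has length $n$ and indices $1,\dots,n$, whereas Theorem~\ref{prodni} uses indices $0,\dots,c$; one must check that these are the same filtration with $n=c+1$ and that the $A_i$ (conjugation action of $\Pi$ on $\Lambda_i$, built from the filtration corresponding to $K\nor_f\Pi$) really coincides with the $\lambda_i$ (built from the filtration corresponding to $\Gamma\nor_f\Pi$) up to the lattice-index subtlety handled by Lemma~\ref{tech2}. Once this identification is pinned down, and once one notes that $J(\rho(\alpha)\circ p)_{x_0}$ is genuinely block-triangular — which is immediate from the block-triangular forms in Theorem~\ref{thmcan}(1) and Lemma~\ref{vormp} together with the chain rule — the rest is a direct translation of Theorem~\ref{prodni}.
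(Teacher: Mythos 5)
Your overall route --- reduce to Theorem~\ref{prodni} by showing that $J(\rho(\alpha)\circ p)_{x_0}$ is block lower-triangular with diagonal blocks matching the linearisation data --- is the same as the paper's. But there is a genuine gap at the very first step: you apply Lemma~\ref{vormp} to $\rho$ with the filtration corresponding to $\Gamma\nor_f\Pi$, and that lemma requires the induced endomorphism $\varphi$ to leave \emph{every term of that filtration} invariant. Here $\Gamma$ is only assumed to be a normal net subgroup, not a fully invariant one, so in general $\varphi(\Gamma)\not\subseteq\Gamma$ (the example following Theorem~\ref{prodni} exhibits exactly this: a net normal $K'$ and an endomorphism of $\Pi$ not preserving it). Consequently the matrices $G_i$ ``induced by $\varphi$ on $\Pi_i/\Pi_{i+1}$'' for the $\Gamma$-filtration need not even be defined, and the block-triangular form $p(x)_i=G_ix_i+p_i(x_1,\dots,x_{i-1})$ --- on which both your item (1) and your item (2) rest --- is not established. (The proof of Lemma~\ref{vormp} uses $\varphi(\gamma)\in\Pi_j$ in an essential way.)

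The fix is the paper's extra step, which you skip: pass to a fully invariant finite index subgroup $K\leq_f\Gamma$ of $\Pi$ and to the filtration corresponding to $K\nor_f\Pi$. By Lemma~\ref{tech2} its terms are the intersections of $K$ with the terms of the $\Gamma$-filtration, so its quotients embed with finite index in those of the $\Gamma$-filtration and have the same ranks $k_i$; choosing $B_i\in\GLQ{k_i}$ carrying these sublattices onto $\Z^{k_i}$ and conjugating by the block-diagonal matrix $B$ with blocks $B_i$ turns $\rho$ into a representation $\rho'=B\circ\rho\circ B^{-1}$ of canonical type with respect to the $K$-filtration, to which Lemma~\ref{vormp} \emph{does} apply, since $\varphi$ preserves every term of that filtration. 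One then works with $p'=B\circ p\circ B^{-1}$, obtains $\prod_{i=0}^c\det(I-A_i(\alpha)F_i)=\det(I-J(\rho'(\alpha)\circ p')_{y_0})$ independently of $y_0$, notes $\det(I-J(\rho(\alpha)\circ p)_{x_0})=\det(I-J(\rho'(\alpha)\circ p')_{B(x_0)})$, and concludes from Theorem~\ref{prodni} as you do. Your remark that ``passing from $K$ to $\Gamma$ only conjugates by a change of lattice and does not affect determinants'' is the right instinct for comparing the two linearisations, but it does not address the prior problem that the $\Gamma$-filtration is not $\varphi$-invariant.
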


\begin{proof}
%Recall that the fact that $p$ is a homotopy lift of $f$ means that $\forall \in \Pi: \; p \circ \rho(\gamma) = \rho(\varphi(\gamma)) \circ p$, where $\varphi:\Pi \to \Pi$ is the endomorphism  induced by the \sf $f$.

Let $\varphi:\Pi \to \Pi$ be the endomorphism  induced by the \sf $f$.
Then $p \circ \rho(\gamma) = \rho(\varphi(\gamma)) \circ p$ for all $\gamma\in\PI$ as $p$ is a homotopy lift of $f$.
Choose a fully invariant subgroup $K\subseteq \Gamma$ of $\Pi$ that is of finite index in $\Gamma$. Let $M_1=\sqrt[K]{[K,K]}$ and 
$M_i=\sqrt[M]{\gamma_i(M)}$ ($i\geq 2$).

It is given that $\rho$ is a canonical type polynomial representation with respect to the torsion free filtration 
$$
\Pi_\ast:1 \nor \Pi_{c+1} = N_c \nor \cdots \nor \Pi_3= N_2 \nor \Pi_2 = N_1 \nor \Pi_1= \Gamma \nor_f \Pi
$$ with $N_1= \sqrt[\Gamma]{[\Gamma,\Gamma]}$ and $N_i = \sqrt[N]{\gamma_i(N)}$ ($i\geq 2$).
In the definition of a canonical type polynomial representation, we fixed isomorphims $j_i: \Pi_i/\Pi_{i+1} \to \Z^{k_i}$ and for 
$\gamma\in \Pi_i$ with $j_i(\gamma \Pi_{i+1})=z$ it holds that 

$$ \rho(\gamma):\R^h\to \R^h: 
\left(
\begin{array}{c}
x_1\\ x_2\\ \vdots \\ x_n
\end{array}
\right) \mapsto
\left(\arraycolsep=0.9pt
\begin{array}{rcl}
& x_1&  \\
&  \vdots&  \\
& \makebox(0,0){$x_{i-1}$} & \\
 x_{i} & + & z \\
\lambda_{i+1}(\gamma) (x_{i+1}) & + & q_{i+1} (x_1, \ldots , x_i) \\ 
&\vdots & \\
\lambda_n(\gamma)x_n& + & q_n(x_1,x_2,\ldots, q_{n-1})
\end{array}
\right)
.$$

Consider the torsion free filtration 
$$
\Pi'_\ast:1 \nor \Pi'_{c+1} = M_c \nor \cdots \nor \Pi'_3= M_2 \nor \Pi'_2 = M_1 \nor \Pi'_1= K \nor_f \Pi.
$$
Then $j_i$ induces maps 
$j_i' : \Pi'_i/\Pi'_{i+1} \to \Z^k_i: \gamma \Pi'_{i+1}\mapsto j_i(\gamma)$, which are well defined since $\Pi'_i=K \cap \Pi_i$ by Lemma~\ref{tech2}. It follows that 
$j_i'(\Pi'_i/\Pi'_{i+1})$ is a finite index subgroup of $\Z^{k_i}$. 
Choose a matrix $B_i\in \GLQ{k_i}$ with $B_i (j_i'(\Pi'_i/\Pi'_{i+1}))=\Z^{k_i}$ and let 
$\tilde{j}_i= B_i j_i'$. Consider the linear map $B$ of $\R^h$ which is given by the blocked diagonal matrix 
\[\begin{pmatrix} 
B_1 & 0 & \cdots & 0\\
0 & B_2 & \cdots & 0 \\
\vdots & \vdots & \ddots & \vdots\\
0 & 0 & \cdots & B_{c+1}
\end{pmatrix}.
\]
The representation $\rho'= B \circ \rho \circ B^{-1}$ is then canonical with respect to the torsion free filtration corresponding to $K\nor_f \Pi$ (using the maps $\tilde{j}_i$ for the identification of $\Pi_i/\Pi_{i+1}$ with $\Z^{k_i}$).

By definition of $\rho'$, we have  for any $\gamma \in \Pi$ a commutative diagram
\begin{center}
\begin{tikzcd}[ampersand replacement=\&]
\R^h \arrow{r}{B} \arrow{d}[swap]{\rho(\gamma)}
              \&\R^h \arrow{d}{\rho'(\gamma)}\\
\R^h \arrow{r}[swap]{B} \& \R^h.
\end{tikzcd}
\end{center}
Of course, $B$ induces a diffeomorphism $\bar{B}:M=\R^h/\rho(\Pi)\to M'=\R^h/\rho'(\pi)$.
Take $p'= B \circ p \circ B^{-1}$, so that $p'$ makes the following diagram commutative:
\begin{center}
\begin{tikzcd}[ampersand replacement=\&]
\R^h \arrow{r}{B} \arrow{d}[swap]{p}
              \&\R^h \arrow{d}{p'}\\
\R^h \arrow{r}[swap]{B} \& \R^h.
\end{tikzcd}
\end{center}
As $p' \circ \rho'(\gamma) = \rho'(\varphi(\gamma)) \circ p'$, the map $p'$ induces a map $\bar{p}':M' \to M'$, which also induces the endomorphism $\p$ on $\Pi$. 
Since 
\begin{center}
\begin{tikzcd}[ampersand replacement=\&]
M \arrow{r}{\bar B} \arrow{d}[swap]{\bar p}
              \& M' \arrow{d}{\bar p'}\\
M \arrow{r}[swap]{\bar B} \& M'
\end{tikzcd}
\end{center}
is a commutative diagram, where $\bar B$ is a diffeomorphism, it holds that 
\[ N(f)=N(\bar p)= N(\bar p').\]

Now, let $\{F_0, F_1,\ldots, F_c\}$ denote the linearisation of $\varphi$ with respect to $K$ and $\{\Lambda_i=\Pi_{i+1}'/\Pi_{i+2}', A_i\}_{i=0,\ldots ,c}$ the linearisation corresponding to $K\nor_f \Pi$.
It follows from Theorem~\ref{thmcan} and Lemma~\ref{vormp} that 
$$\prod_{i=0}^c \det( I- A_i(\alpha)F_i)
=\det(I-J(\rho'(\alpha)\circ p')_{x_0})$$
for every $\alpha$ in $\PI$, so this term is independent of the chosen $x_0$. 
Note that 
 \[ \det(I-J(\rho(\alpha)\circ p)_{x_0}) =
 \det(I - J( B^{-1}\circ  \rho'(\alpha) \circ p' \circ B)_{x_0})=\]
 \[ \det(I - B^{-1} J(\rho'(\alpha) \circ p')_{B(x_0)}B) = 
\det(I -  J(\rho'(\alpha) \circ p')_{B(x_0)}) \]
thus also $\det(I-J(\rho(\alpha)\circ p)_{x_0})$ is 
independent of the chosen $x_0$. 

By Theorem~\ref{prodni}, we conclude that 
\begin{align*}
N(f) 
&= \frac{1}{[\Pi:K]}\sum_{\bar\alpha\in \Pi/K}\prod_{i=0}^c \lvert\det( I- A_i(\alpha)F_i)|\\
&=\frac{1}{[\Pi:K]}\sum_{\bar\alpha\in \Pi/K}\lvert\det(I-J(\rho'(\alpha)\circ p')_{x_0})|\\
&=\dfrac{1}{[\PI:K]}\sum_{\bar \al\in\PI/K}| \det(I-J(\rho(\alpha)\circ p)_{x_0})|.
\end{align*}
\end{proof}

\begin{remark}
We want to remark that in the above theorem it is important that we are working with a canonical type polynomial representation. If $
\rho: \Pi \to \pd h$ is a general representation (but still with polynomials of bounded degree) letting $\Pi$ act properly discontinuously and cocompactly on $\R^h$ so that $M=\R^h/\rho(\Pi)$ is a general polynomial manifold realising the infra-solvmanifold with fundamental group $\Pi$, then it still follows from the result of Der\'e that any \sf $f$ has a polynomial homotopy lift. 
So there exists a polynomial map $p:\R^h \to \R^h$ inducing a 
map $\bar{p}:M\to M$ which is homotopic to $f$ and hence $N(f)=N(\bar{p})$. However, it is no longer true that the terms 
$\det(I-J(\rho(\alpha) \circ p)_{x_0})$ are independent of $x_0$ and a fortiori we cannot express $N(f)$ with a formula like the one in the previous theorem. 
\end{remark}

\begin{example}
Let $\Pi=\Z^5 \rtimes \Z$ be the group from Example~\ref{big example} and consider the torsion free filtration
\[ 1 \nor \Pi_2\nor \Pi_1 \nor_f \Pi \mbox{ with }\Pi_2=\Z^5 \mbox{ and } \Pi_1= K = \Z^5 \rtimes 2 \Z .\]
(Note that $K$ is a fully invariant net subgroup of $\Pi$).

One can check that the map
\[ \rho : \Pi \to {\rm P}(\R^6): (\vec{z_2}, z_1) \mapsto \rho(\vec{z_2}, z_1) \mbox{ with }
\rho(\vec{z_2}, z_1) \begin{pmatrix}x_1 \\ \vec{x}_2\end{pmatrix} \mapsto 
\begin{pmatrix}
x_1 + z_1 \\
A^{z_1} \vec{x}_2 + z_2
\end{pmatrix}
\]
(where $x_1\in \R$ and $\vec{x}_2 \in \R^5$) is a canonical type polynomial (in fact even affine) representation of $\Pi$ with respect to the above torsion free filtration. 
So the solvmanifold with fundamental group $\Pi$ is diffeomorphic to the polynomial (affine) manifold $\R^6/\Pi$ where $\Pi$ is acting on $\R^6$ via $\rho$. 

\medskip

For any integer $k$ we consider the map 
\[ p_k: \R^6 \to \R^6 : \begin{pmatrix}x_1 \\ \vec{x}_2\end{pmatrix} \mapsto \begin{pmatrix} - x_1 \\ B_k \vec{x}_2\end{pmatrix} \]
where $B_k$ is the matrix from Example~\ref{big example}. Now, one can check that $p_k \circ \rho(\vec{z}_2, z_1) = 
\rho( B_k \vec{z}_2 , -z_1) \circ p_k = \rho( \varphi_k(\vec{z}_2, z_1)) \circ p_k$. This shows that $p_k$ induces a map $f_k$ on the quotient manifold $\R^6/\Pi$ (so $p_k$ is a lift of $f_k$) and that $f_k$ induces the endomorphism $\varphi_k$ on $\Pi$.  We already computed $N(f_k)$ using Corollary~\ref{formc} in Example~\ref{big example}.

Let us now redo the computations using Theorem~\ref{aveforusingp}. First of all note that 
\[ J(\rho(\vec{z}_2, z_1) \circ p_k)_{x_0} = \begin{pmatrix} 
-1 & 0 \\
0 & B_k
\end{pmatrix}\]
for any choice of $x_0\in \R^6$. Then according to Theorem~\ref{aveforusingp} we have that 
\begin{eqnarray*}
N(f_k) & = & \dfrac{1}{[\PI:K]}\sum_{\bar \al\in\PI/K}\lvert\det(I-J(\rho(\alpha)\circ p)_{x_0})|\\
           & = & \dfrac{1}{2} ( | \det \begin{pmatrix} 1 - (-1) & 0 \\
                                                                                  0 & I-B_k\end{pmatrix} |      +
                                         | \det \begin{pmatrix} 1 - (-1) & 0 \\
                                                                                  0 & I-AB_k\end{pmatrix} |  )    \\
           & = & \di{B_k} + \di{AB_k}         \\
           & = & 3 |1-k| + 3 |1 +k|                             
          \end{eqnarray*}
which is exactly the same result as we obtained before.
\end{example}

\section{fixed point properties of polynomial maps}\label{6}

In this section, we show that polynomial maps in some sense realise the least number of fixed points prescribed by the Nielsen number. 
We first focus on self-maps of \NR solvmanifolds.

\subsection{Fixed points of polynomial maps on \NR solvmanifolds.}

In this subsection, we show the following
\begin{prop}\label{realisenr}
Let $\hat p$ be a polynomial map on a canonical type polynomial \NR solvmanifold $\R^h/K$.
\begin{enumerate}
    \item If $N(\hat p)\neq 0$, then $\hat p$ has exactly $N(\hat p)$ fixed points.
    \item If $N(\hat p)=0$, then $\hat p$ has no fixed points or uncountably many fixed points.
\end{enumerate}
\end{prop}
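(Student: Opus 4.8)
The plan is to exploit the very rigid structure of a canonical type polynomial representation of a \NR solvmanifold. Let $\rho:K\to\pd h$ be the canonical type representation with respect to the filtration $1\nor N_c\nor\cdots\nor N_1=N\nor_f K$ (here $K$ plays the role of $\PI_1$, and since $K$ is already an $S$-group there is no finite quotient), and let $\hat p$ have polynomial homotopy lift $p:\R^h\to\R^h$. By Lemma~\ref{vormp}, $p$ is \emph{triangular}: writing $x=(x_0,x_1,\dots,x_c)$ according to the blocks $K/N,\ N_1/N_2,\dots,N_c/N_{c+1}$, we have $p(x)_i = G_i x_i + p_i(x_0,\dots,x_{i-1})$ with $\{G_0,\dots,G_c\}$ the linearisation of $\varphi:=\hat p_*$. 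The fixed point equation $p(x)=x$ thus becomes a triangular system: $(I-G_0)x_0 = p_0$, then $(I-G_1)x_1 = p_1(x_0)$, and so on, where at stage $i$ the right-hand side depends only on the already-determined $x_0,\dots,x_{i-1}$. First I would set up this reduction carefully, and observe (using Theorem~\ref{prod} and Theorem~\ref{aveforusingp}, or rather its \NR specialisation) that $N(\hat p) = \prod_{i=0}^c|\det(I-G_i)|$ and that this equals $|\det(I-J(p)_{x_0})|$ independently of $x_0$, since the Jacobian of $p$ is block lower-triangular with diagonal blocks exactly the $G_i$.

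For part (1), suppose $N(\hat p)\neq 0$, so every $\det(I-G_i)\neq 0$, i.e.\ each $I-G_i$ is invertible over $\R$. Then the triangular system has, for each choice of the preceding coordinates, a \emph{unique} solution for $x_i$, so $p$ has exactly one fixed point $\tilde x\in\R^h$. Now I would count fixed points of $\hat p$ on $\R^h/K$ via the usual correspondence: fixed points of $\hat p$ correspond to fixed points of the lifts $\gamma\circ p$, $\gamma\in K$, modulo the action of the stabiliser, and two lifts in the same Reidemeister class have $\pi$-identified fixed point sets. Each lift $\rho(\gamma)\circ p$ is again triangular of the same shape with the same diagonal blocks $G_i$ (since $\rho(\gamma)$ is triangular with \emph{identity} diagonal blocks on the $N_i/N_{i+1}$ for the appropriate index and a translation-type action otherwise — this is exactly the content of Theorem~\ref{thmcan}(2)), hence each lift has at most one fixed point, and therefore each fixed point class of $\hat p$ consists of at most one point. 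A fixed point class is nonempty iff the corresponding lift has a (necessarily unique) fixed point; and since $\det(I-G_i)\neq 0$ forces \emph{every} lift to have exactly one fixed point in $\R^h$, every one of the $R(\hat p)$ fixed point classes is nonempty and is a single point. Finally, with all diagonal blocks invertible, each such single-point class is essential (the fixed point index of an isolated transverse fixed point of a polynomial map equals $\operatorname{sgn}\det(I-J(p)) = \pm1 \neq 0$); hence $\#\fix(\hat p) = R(\hat p) = N(\hat p)$. I would need the elementary index computation that for an isolated fixed point with invertible $I-J$, the index is nonzero — this is standard, but worth citing.

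For part (2), suppose $N(\hat p)=0$, so $\det(I-G_{i_0})=0$ for at least one index $i_0$; pick the \emph{smallest} such $i_0$. Walking down the triangular system, coordinates $x_0,\dots,x_{i_0-1}$ are uniquely determined (the blocks $I-G_0,\dots,I-G_{i_0-1}$ being invertible), which plugs fixed values into $p_{i_0}$; then the stage-$i_0$ equation reads $(I-G_{i_0})x_{i_0} = v$ for a fixed vector $v\in\Rk{i_0}$. This linear equation either has no solution (if $v\notin\operatorname{im}(I-G_{i_0})$) or has a whole affine subspace of positive dimension of solutions. In the first case $\hat p$ has no fixed points at all. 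In the second case, each value of $x_{i_0}$ on that positive-dimensional affine space can be propagated to a fixed point of $p$ (continuing down the triangular system; at later stages either the block is invertible, giving a unique continuation, or it is singular, giving even more freedom, but in any case the solution set surjects onto the affine space from stage $i_0$), so $p$ has uncountably many fixed points in $\R^h$, hence so does $\hat p$ on the quotient. The main obstacle — though it is more bookkeeping than genuine difficulty — is handling the propagation at later singular stages cleanly and making sure the $\R$-points (not just integer points) of the polynomial identities are what is used; this is already licensed by the argument in Lemma~\ref{vormp} that a polynomial vanishing on $\Z^L$ vanishes on $\R^L$. I would also double-check that, on the quotient, ``uncountably many'' survives: the $K$-action on $\R^h$ is properly discontinuous, so an uncountable fixed point set upstairs maps to an uncountable fixed point set downstairs.
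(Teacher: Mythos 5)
Your setup via Lemma~\ref{vormp} is the right starting point, and your index argument in part (1) (each class is an isolated transverse fixed point of index $\pm1$, hence essential, so $\#\fix{\hat p}=R(\hat p)=N(\hat p)$) is a legitimate alternative to the paper's route, which instead proves $N=R$ algebraically via an addition formula for Reidemeister numbers. But there is a genuine gap at the heart of your argument: the claim that every lift $\rho(\gamma)\circ p$ has the \emph{same} diagonal blocks $G_i$ as $p$, because ``$\rho(\gamma)$ is triangular with identity diagonal blocks''. This is false. For $\gamma\in K$ with $\bar\gamma\neq 1$ in $K/N$, Theorem~\ref{thmcan} gives the identity only on the blocks \emph{above} the level where $\gamma$ sits; on the factors $N_i/N_{i+1}$ the linear part of $\rho(\gamma)$ is the conjugation action $\rho_i(\bar\gamma)$, which is nontrivial in general --- indeed the \NR condition is precisely a hypothesis on these actions and would be vacuous if they were all trivial. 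So the diagonal blocks of $\rho(\gamma)\circ p$ are $\rho_i(\bar\gamma)G_i$, and $\de{G_i}\neq0$ does not formally imply $\de{\rho_i(\bar\gamma)G_i}\neq0$ (e.g.\ $G_i=-I$ against a linear part with eigenvalue $-1$). The missing ingredient is exactly the \NR property: since $\tau_\gamma\circ\p$ is induced by the same map, Theorem~\ref{prod} and Remark~\ref{nr} give $\prod_i\di{\rho_i(\bar\gamma)G_i}=N(\hat p)$ for \emph{every} $\gamma$, which is what lets you conclude in part (1) that every lift has all blocks nonsingular (hence a unique fixed point) and in part (2) that every lift has at least one singular block. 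Without this your argument never uses the \NR hypothesis, which should be a warning sign.

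Part (2) has two further problems. First, you analyse only the single lift $p$ and conclude from ``$p$ has no fixed point'' that $\hat p$ has none; but $\fix{\hat p}=\bigcup_{\gamma}\pi(\fix{\gamma p})$, so the dichotomy must be established for \emph{every} lift $\gamma p$, after which one uses that $\pi$ has countable fibres (the covering group is countable) to pass to the quotient. Second, your forward propagation from the \emph{smallest} singular index $i_0$ is not correct: a later singular stage $(I-\rho_j(\bar\gamma)G_j)x_j=v_j(x_0,\dots,x_{j-1})$ may be unsolvable for most (or all) of the partial solutions coming from stage $i_0$, so the solution set need not surject onto the affine space at stage $i_0$, and the full system can even be empty when stage $i_0$ is not. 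The clean argument runs the other way, as in the paper: if $\gamma p$ has a fixed point $x$ at all, let $m$ be the \emph{largest} singular index, perturb $x_m$ by any element of the nontrivial kernel of the $m$-th block of $I-J(\gamma p)$ --- this leaves all stages up to $m$ satisfied since the earlier coordinates are unchanged --- and uniquely re-solve the remaining stages, whose blocks are invertible. This yields uncountably many fixed points whenever there is one.
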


We make some preliminary observations.

\begin{lemma}\label{rnsolv}
Let $f:S\to S$ be a self-map on an \NR solvmanifold $S$. Then
$$N(f)=\begin{cases}
R(f) &\text{if }R(f)<\infty,\\
0   &\text{if }R(f)=\infty.
\end{cases}$$
\end{lemma}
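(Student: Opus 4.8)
\smallskip
The plan is to compute the Reidemeister number $R(f)$ explicitly in terms of the linearisation of the induced endomorphism, and then to play this computation off against the product formula of Keppelmann and McCord (Theorem~\ref{prod}).

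Write $K=\Pi_1(S)$, let $\varphi=f_*$ be an endomorphism of $K$ induced by $f$, and let $\{F_0,\dots,F_c\}$ be its linearisation relative to the filtration $1\nor N_c\nor\cdots\nor N_1=N\nor K$ (all factors free abelian, $N=\iso{K}{[K,K]}$). By Theorem~\ref{prod}, $N(f)=\prod_{i=0}^c\di{F_i}$, a product of non-negative integers, so $N(f)=0$ precisely when $\de{F_j}=0$ for some $j$. Since one always has $N(f)\le R(f)=R(\varphi)=\#\RR\varphi$, the lemma reduces to the assertion: \emph{if $\de{F_i}\neq 0$ for every $i$, then $R(\varphi)=\prod_{i=0}^c\di{F_i}<\infty$; otherwise $R(\varphi)=\infty$.} (This is well posed: changing the reference lift $\tilde f_0$ replaces $\varphi$ by an inner twist and each $F_i$ by $\rho_i(\bar k)F_i$, and neither the hypothesis nor the value $\prod_i\di{F_i}$ is affected, by Remark~\ref{nr}.)

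I would prove this assertion by induction on the length $c$ of the filtration, using the extension $1\to N_c\to K\to \bar K:=K/N_c\to 1$. Here $N_c$ is free abelian, $\varphi(N_c)\subseteq N_c$, the quotient $\bar K$ is again a \s group with the \NR property (a torsion element of $\bar K$ maps to $0$ in the torsion free group $K/N$, hence lies in the torsion free nilpotent group $N/N_c$, hence is trivial; and the $\rho_i$ of $\bar K$ are simply the $\rho_i$ of $K$ for $i<c$), and the linearisation of the induced endomorphism $\bar\varphi$ is $\{F_0,\dots,F_{c-1}\}$. The base case $K=\Z^k$ is immediate, since then $R(\varphi)=\#\operatorname{coker}(I-F_0)$, which equals $\di{F_0}$ when $\de{F_0}\neq 0$ and is infinite otherwise. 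For the inductive step I would use the standard description of twisted conjugacy classes of a group extension: the natural map $\RR\varphi\to\RR{\bar\varphi}$ is surjective, and for a lift $g\in K$ of $\bar g$ the fibre over $[\bar g]_{\bar\varphi}$ is a quotient of $\operatorname{coker}(I-\rho_c(\bar g)F_c)$ by a linear action of $\fix(\tau_g\circ\bar\varphi)$. There are three cases. If $R(\bar\varphi)=\infty$ — equivalently, by the inductive hypothesis, some $\de{F_i}=0$ with $i<c$ — surjectivity forces $R(\varphi)=\infty$. If $R(\bar\varphi)<\infty$ but $\de{F_c}=0$, then already the fibre over $[\bar 1]_{\bar\varphi}$ is $\operatorname{coker}(I-F_c)$ modulo a linear action of $\fix(\bar\varphi)$; passing to the free part $\cong\Z^r$ with $r\ge1$ of this cokernel, on which $\fix(\bar\varphi)$ acts through $\GLZ r$ (it commutes with $F_c$ there by direct computation, cf.\ relation $(\ast)$ in the proof of Theorem~\ref{prodni}), the orbit space is infinite because the greatest common divisor of the coordinates is a $\GLZ r$-invariant assuming infinitely many values, whence $R(\varphi)=\infty$. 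Finally, if $R(\bar\varphi)<\infty$ and $\de{F_c}\neq 0$ (so also $\de{F_0}\neq0$), then Lemma~\ref{matrixanalysis}, applied with $X=F_c$, $A=\rho_c$, $\Phi=F_0$ and the intertwining relation $F_c\rho_c(\bar k)=\rho_c(F_0(\bar k))F_c$ — the divisibility constant being $1$, so that only the \NR hypothesis is used, which is the original case treated by Keppelmann and McCord \cite{km95-1} — yields $\di{\rho_c(\bar g)F_c}=\di{F_c}$ for every $\bar g$; hence each fibre has at most $\di{F_c}$ elements and $R(\varphi)\le R(\bar\varphi)\,\di{F_c}=\prod_{i=0}^{c}\di{F_i}$ by induction, while $R(\varphi)\ge N(f)=\prod_{i=0}^c\di{F_i}$ by Theorem~\ref{prod}, so the two bounds agree. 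This closes the induction and proves the lemma.

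The step I expect to be the main obstacle is the uniform control on the fibres of $\RR\varphi\to\RR{\bar\varphi}$: a priori the cokernel $\operatorname{coker}(I-\rho_c(\bar g)F_c)$ depends on $\bar g$ and could even change from finite to infinite, and it is precisely the matrix-analytic Lemma~\ref{matrixanalysis} — hence the \NR hypothesis — that rules this out. Note that the sandwich $N(f)\le R(\varphi)\le\prod_i\di{F_i}=N(f)$ is what keeps the argument clean: I only need the crude bound ``fibre size $\le$ order of the relevant cokernel'', and never have to decide whether the $\fix(\tau_g\circ\bar\varphi)$-actions on the fibres are trivial.
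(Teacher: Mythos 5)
Your overall strategy is sound and genuinely different from the paper's: the paper quotients out $N=\iso{K}{[K,K]}$ first and then descends the central series of $N$ top-down, invoking the ready-made addition formula of \cite{dtv19} at each stage, whereas you induct bottom-up by peeling off the centre $N_c$. Your reduction of the lemma to the assertion about $R(\varphi)$, the base case, case 1 (surjectivity of $\RR{\varphi}\to\RR{\bar\varphi}$), and case 3 (the sandwich $N(f)\le R(\varphi)\le R(\bar\varphi)\,\di{F_c}$, with Lemma~\ref{matrixanalysis} in its $k=1$ Keppelmann--McCord form controlling the twisted cokernels) are all correct.

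The one step that would fail as written is the justification in case 2. The action of $\operatorname{fix}(\bar\varphi)$ on $\operatorname{coker}(I-F_c)$ is not linear but affine: a lift $h$ of $\bar h\in\operatorname{fix}(\bar\varphi)$ acts by $[a]\mapsto[\rho_c(\bar h)a+h\varphi(h)^{-1}]$, and the translation part $h\varphi(h)^{-1}\in N_c$ need not vanish. The greatest common divisor of the coordinates is not an invariant of an affine action --- the infinite dihedral group acts transitively on $\Z$ by affine transformations --- so your argument does not show that the fibre over $[\bar 1]_{\bar\varphi}$ is infinite. Fortunately the gap closes itself: in case 2 the inductive hypothesis gives $\de{F_i}\neq0$ for all $i<c$, and Lemma~\ref{matrixanalysis} then gives $\de{\rho_i(\bar g)F_i}\neq0$ as well; descending the filtration of $\bar K$ this forces $\operatorname{fix}(\tau_{\bar g}\circ\bar\varphi)=1$ for every $\bar g$. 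Hence there is no residual action at all, and each fibre of $\RR{\varphi}\to\RR{\bar\varphi}$ is the full group $\operatorname{coker}(I-\rho_c(\bar g)F_c)$, which is infinite because $\de{\rho_c(\bar g)F_c}=\de{F_c}=0$. (The same observation turns the inequality in case 3 into an equality, so the Nielsen-number lower bound becomes optional.) With this repair your proof is complete; what the paper's route buys is that all of this fibre bookkeeping is packaged once and for all in the addition formula, while yours makes visible exactly where the $\mathcal{NR}$ hypothesis enters, namely through Lemma~\ref{matrixanalysis}.
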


\begin{notation}
We can write the above more compactly: 
define the bijection
$\0 \cdot:\N_0\cup\{\infty\}\to\N$ %as
by
$\0x=x$ if $x\in\N_0$ and $\0\infty=0$.
Set $\oo\cdot:=\0\cdot^{-1}$.
Then the above can be written as $N(f)=\0{R(f)}$ or $\oo{N(f)}={R(f)}$.
\end{notation}

\begin{proof}
Let $K$ be the fundamental group of $S$ and $\p\in\End K$ an endomorphism induced by $f$.
Let $\{\Delta_i,A_i\}$ be the linearisation corresponding to $K\nor_f \PI$, and let $\{F_0,\dots, F_c\}$ be  the linearisation of $f$. Recall from Theorem~\ref{prod} that $N(f)=\prod_{i=0}^c\di {F_i}$.

As $\p$ leaves $N$ invariant and $K/N$ is free abelian, we can compute $R(f)=R(\p)$ using the \emph{addition formula} \cite[Lemma~2.1]{dtv19}: 
say $\RR{\bar\p}=\{\cl{z_1}{F_0},\dots, \cl{z_d}{F_0}\}$ where $z_i\in K/N$ and $d=R(F_0)=\di{F_0}_\infty$. 
Take $k_i\in K$ with $k_iN=z_i$, and put $\tau_{k_i}:K\to K:x\mapsto k_ix\ii {k_i}$. Then
\begin{align*}
    R(\p)
    &=\sum_{i=1}^d R({\tau_{k_i}}|_{N}\circ\p|_N)\\
    &=\sum_{i=1}^d \prod_{j=1}^c R({A_j( k_i)}\circ F_j)\\
    &=\sum_{i=1}^d \prod_{j=1}^c \di{A_j( k_i)\circ F_j}_\infty\\
    &=\sum_{i=1}^d \prod_{j=1}^c \di{F_j}_\infty\\
    &=\oo{N(f)}.
\end{align*}
The second equality follows from repeated applications of \cite[Lemma~2.1]{dtv19}.
For the fourth equality, see Remark~\ref{nr}.
\end{proof}

\begin{lemma}\label{fpnr}
Let $\hat p:\S\to \S$ be a polynomial map on the canonical type polynomial \NR solvmanifold $\S$ with polynomial homotopy lift $p:\R^h\to\R^h.$ Let $k\in K$. 
\begin{enumerate}
    \item If $N(\hat p)\neq 0$, then $kp$ has a unique fixed point.
    \item If $N(\hat p)=0$, then $kp$ has no fixed points or uncountably many fixed points. 
\end{enumerate}
\end{lemma}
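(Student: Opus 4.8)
\textbf{Proof plan for Lemma~\ref{fpnr}.}

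The plan is to reduce the statement about fixed points of $kp$ on $\R^h$ to a statement about the fixed point classes of the induced map $\hat p$ on $\S=\R^h/K$, using the fact (valid for \NR solvmanifolds by Lemma~\ref{rnsolv}) that the Nielsen number equals the Reidemeister number whenever the latter is finite, and that all fixed point classes are then essential. First I would observe that, since $p$ is a polynomial homotopy lift of $\hat p$, the map $k p$ is another lift of $\hat p$, and its fixed point set $\F{kp}$ projects onto the fixed point class of $\hat p$ determined by the lifting class $[kp]$; conversely $\pi(\F{kp})$ is exactly one fixed point class of $\hat p$. Since $K$ acts freely on $\R^h$, the projection $\pi:\R^h\to\S$ restricts to a bijection between $\F{kp}$ and $\pi(\F{kp})$, so counting fixed points of $kp$ is the same as counting points in one fixed point class of $\hat p$.

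For part~(1), suppose $N(\hat p)\neq 0$. By Lemma~\ref{rnsolv} this forces $R(\hat p)=N(\hat p)<\infty$ and moreover every one of the finitely many fixed point classes is essential (their number is $R(\hat p)=N(\hat p)$). In particular the fixed point class $\pi(\F{kp})$ is essential, hence nonempty, so $kp$ has at least one fixed point. For uniqueness I would argue that an essential fixed point class on an \NR solvmanifold consists of a single point: the index of a fixed point class of a map inducing an endomorphism with linearisation $\{F_0,\dots,F_c\}$ can be computed locally, and since each $\de{F_i}\neq 0$ (as $N(\hat p)=\prod\di{F_i}\neq 0$), the lift $kp$ has only nondegenerate fixed points, all of the same index sign; a Lefschetz-type computation (or the equality $\#\F{\hat p}=N(\hat p)=R(\hat p)$, forcing exactly one point per class) then shows $kp$ has a unique fixed point. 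I would phrase this via the chain $N(\hat p)\le \#\F{\hat p}\le R(\hat p)=N(\hat p)$, which yields $\#\F{\hat p}=R(\hat p)$, i.e.\ each of the $R(\hat p)$ fixed point classes is a single point; combined with the bijection $\F{kp}\cong\pi(\F{kp})$ this gives exactly one fixed point for $kp$.

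For part~(2), suppose $N(\hat p)=0$. Then by Lemma~\ref{rnsolv} either $R(\hat p)=\infty$ or $R(\hat p)<\infty$ with $\prod\di{F_i}=0$; in the latter case some $\de{F_i}=0$, so the linear part of $kp$ (in canonical coordinates, the Jacobian has a block-triangular form with the $F_i$-related blocks on the diagonal) has $1$ as an eigenvalue. One then analyses $kp$ coordinate-block by coordinate-block as in the canonical type representation: at the first level where the relevant linear map fails to be invertible, the fixed point equation either has no solution or, once one solution exists, a whole positive-dimensional affine family of solutions, which is uncountable; the remaining blocks are then solved recursively (the lower blocks being polynomial in the already-fixed higher coordinates), each contributing either nothing or again a family, so $\F{kp}$ is empty or uncountable. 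The main obstacle I expect is making the ``no fixed points or uncountably many'' dichotomy in part~(2) precise: one must show that the polynomial system defining $\F{kp}$, which has the triangular structure coming from Lemma~\ref{vormp} and Theorem~\ref{thmcan}, cannot have a finite but nonzero number of solutions when some diagonal block is non-invertible — this requires an inductive argument on the blocks, keeping track of when a non-invertible block produces a positive-dimensional solution set and when it produces an inconsistent equation, and arguing that a positive-dimensional real algebraic set is uncountable.
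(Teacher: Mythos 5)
Your part (1) is where the real gap sits. The counting chain $N(\hat p)\le\#\F{\hat p}\le R(\hat p)=N(\hat p)$ rests on the inequality $\#\F{\hat p}\le R(\hat p)$, which is not a valid general principle: a single (even essential) fixed point class can contain arbitrarily many points, as one sees by homotoping any map so as to create cancelling pairs of fixed points inside one class without changing $R$. That this does \emph{not} happen for a polynomial map on a canonical type \NR solvmanifold is precisely the content of Lemma~\ref{fpnr} and of Proposition~\ref{realisenr} which depends on it, so invoking it here is circular. Your fallback (nondegenerate fixed points of equal index sign plus a Lefschetz-type count) can be pushed through, but it needs the Anosov relation $\lvert L(\hat p)\rvert=N(\hat p)$, the signed identity $\de{A_i(k)F_i}=\de{F_i}$ (equality of absolute values is not enough to get a common index sign over all lifts $kp$), and the local index formula for nondegenerate fixed points. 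The paper avoids all of this: by Theorem~\ref{thmcan} and Lemma~\ref{vormp}, $kp$ has the triangular form $x\mapsto\bigl(B_1(k)x_1+\al_1,\;\dots,\;B_{c+1}(k)x_{c+1}+\al_{c+1}(x_1,\dots,x_c)\bigr)$ with $B_i(k)=A_{i-1}(k)F_{i-1}$, and the \NR property gives $\di{B_i(k)}=\di{F_{i-1}}\neq 0$, so the fixed point system is solved uniquely block by block. Uniqueness must come from this explicit algebra, not from fixed-point-class counting.

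For part (2) your strategy is the paper's, but anchoring the induction at the \emph{first} singular block runs into exactly the obstacle you flag: after perturbing $x_j$ along $\ker(I-B_j)$ at the first singular level, a later singular block's equation may become inconsistent for the perturbed prefix, so the family need not survive, and the bookkeeping you propose never terminates cleanly. The paper sidesteps this by perturbing at the \emph{largest} index $m$ with $\det(I-B_m)=0$: given a fixed point $x$ and any $r\in\ker(I-B_m)$, the level-$m$ equation still holds at $x_m+r$ because its inhomogeneous term depends only on $x_1,\dots,x_{m-1}$, which are untouched, and every level above $m$ is then uniquely solvable since its block is invertible. This immediately yields an uncountable family with no further case analysis. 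In short: part (2) is the right idea with the wrong pivot, and part (1) needs the direct linear-algebra argument in place of the topological count.
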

\begin{proof}
We use the notations of Section~\ref{pol}, and assume that $K\subseteq\pd{h}$ acts canonically (in the sense of Section~\ref{pol}) on $\R^h$.

By Theorem~\ref{thmcan} and Lemma~\ref{vormp}, we know that %for all $k\in K$,
$$kp\vc{x_1}{x_{c+1}}
=\vcl{B_1(k)x_1}{\al_1}{B_{c+1}(k)x_{c+1}}{\al_{c+1}(x_1,\dots, x_{c})}$$ 
with $B_i(k):=A_{i-1}(k)F_{i-1}$ and $\al_i\in\pp{K_{i-1}}{k_i}$.

\begin{enumerate}
    \item 
    First, suppose that $N(\hat p)=\prod_{i=0}^c\di 
    {F_i}\neq 0$. As $K$ is $\mathcal{NR}$, also $\di{B_i(k)}=\di{F_{i-1}}\neq 0$ for all $k$. We conclude that each $kp$ has exactly one fixed point.
    
    \item Next, suppose that $N(\hat p)=0$. Then there exists $j\in\{1,\dots, c+1\}$ with $\di{B_j}=0$. This leaves two possibilities:
    
    \begin{itemize}
        \item $\fix{kp}$ is empty;
        \item $\fix{kp}$ is uncountable. Indeed, take $x\in\fix{kp}$ and let $m$ be the largest $i$ with $\di{B_i}=0$. 
        Take any $r\in \ker(I-B_m)$. As $\det(I-B_i)\neq 0$ for all $i>m$, there exist $y_{i}\in\R^{k_i}$, $i>m$, with
        $$y_{i}=B_i(y_{i})+p_i(x_1,\dots, x_{m-1},x_m+r, y_{m+1},\dots, y_{i-1}).$$ 
        So $kp$ fixes the point $(x_1,\dots, x_{m-1}, x_m+r, y_{m+1},\dots, y_{c+1})$ as well.
    \end{itemize}
    We conclude that each $\fix{kp}$ is either empty of uncountably infinite.
\end{enumerate}
This finishes the proof.
\end{proof}

\begin{proof}[Proof of Proposition~\ref{realisenr}]
Take a polynomial homotopy lift $p:\R^h\to\R^h$, and let $\p$ be the corresponding endomorphism on $K$, i.e.\ $p\circ k=\p(k)\circ p$ for all $k\in K$.
As $$\fix{\hat p}=\bigsqcup_{\clp{ k}\in\RR \p}\pi(\fix{k p}),$$
it remains to examine the projections $\pi_k:\fix{kp}\to\pi(\fix{kp})$.
\begin{enumerate}
    \item If $N(\hat p)\neq 0$, each $\fix{kp}$ is a singleton by Lemma~\ref{fpnr}(1). Hence $$\#\fix{\hat p}=R(\p)=N(\hat p)$$ by Proposition~\ref{rnsolv}.
    
    \item If $N(\hat p)=0$, each $\fix{kp}$ is either empty or uncountable by Lemma~\ref{fpnr}(2). 
    As $\pi_k$ has countable fibers, % ($\pi_k^{-1}(\pi(x))=\{{}^lx\mid l\in K: k\p(l)=lk\}$), 
    each $\pi(\fix{kp})$ must be empty or uncountable as well. 
    We conclude that $\fix{\hat p}$ is empty or uncountably infinite.
\end{enumerate}
This completes the proof.
\end{proof}

\subsection{Fixed points of poynomial maps on \infra manifolds}

Proposition~\ref{realisenr} fails for polynomial maps on \infra manifolds, see Example~\ref{vb} below. 
We do have the following weaker version:
\begin{prop}\label{realiseinfra}
Let $f$ be a polynomial map on a canonical type polynomial \infra manifold $\R^h/\PI$. 
Then $f$ has either (uncountably) infinitely many fixed points, or $f$ has exactly $N(f)$ fixed points. 
\end{prop}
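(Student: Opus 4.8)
The plan is to reduce the infra-manifold statement to the \NR solvmanifold case handled by Proposition~\ref{realisenr}. Recall that $M=\R^h/\PI$ with $\PI$ torsion free polycyclic-by-finite, and by Section~\ref{netex} there is a fully invariant, finite index \NR subgroup $K\nor_f\PI$ so that $S=\R^h/K$ is an \NR solvmanifold finitely covering $M$; moreover we may assume $\rho:\PI\to\pd h$ is of canonical type with respect to the filtration corresponding to $K\nor_f\PI$, and the restriction makes $S$ a canonical type polynomial \NR solvmanifold. Let $p:\R^h\to\R^h$ be a polynomial homotopy lift of $f$, inducing $\p\in\End\PI$ with $p\circ\rho(\gamma)=\rho(\p(\gamma))\circ p$ for all $\gamma\in\PI$. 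Since $K$ is fully invariant, $\p(K)\subseteq K$, so $p$ is also a polynomial homotopy lift of a \sf $\hat p:S\to S$, and $\pi_S:S\to M$ is a finite regular covering with covering group $\PI/K$ (Remark~\ref{cov-transfo}).

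First I would decompose the fixed point set. Writing $q:\R^h\to M$ for the universal cover, we have
$$\fix{f}=\bigsqcup_{\clp{\alpha}\in\RR\p}q(\fix{\alpha p}),$$
where $\alpha$ ranges over coset representatives of the $\p$-twisted conjugacy classes in $\PI$. Each lift $\alpha p$ (with $\alpha\in\PI$, viewed via $\rho$) is itself a polynomial map of $\R^h$, and by Theorem~\ref{thmcan} together with Lemma~\ref{vormp} it has the block-triangular form with diagonal blocks $A_{i-1}(\alpha)F_{i-1}$, exactly as in the proof of Lemma~\ref{fpnr}. The key point is that, since $\alpha\in\PI$ need not lie in $K$, the automorphisms $A_{i-1}(\alpha)$ need not be net, so $\det(I-A_{i-1}(\alpha)F_{i-1})$ can vanish even when $N(f)\neq 0$. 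I would split into the two cases: either \emph{every} lift $\alpha p$ (for $\alpha$ running over twisted conjugacy class representatives) has all diagonal blocks invertible, or at least one diagonal block of some $\alpha p$ is singular.

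In the first case, every $\alpha p$ has a unique fixed point by the triangular solving argument used in Lemma~\ref{fpnr}(1) (solve $x_1$ from $x_1=A_0(\alpha)F_0 x_1+\alpha_1$, then $x_2$, etc.), so $\#\fix{f}=R(\p)$. Now I must show $R(\p)=N(f)$ under this hypothesis. This follows from the averaging formula: by Theorem~\ref{avefor}, $N(f)=\frac{1}{[\PI:K]}\sum_{\bar\alpha} N(\bar\alpha\hat p)$, and each $N(\bar\alpha\hat p)=\di{A_i(\alpha)F_i}$-product is nonzero by the case hypothesis, so all these Nielsen numbers are positive; then the one-to-one correspondence between fixed point classes of $f$ and $\p$-twisted conjugacy classes, combined with the fact that each class $q(\fix{\alpha p})$ is a singleton (hence carries index $\pm1$ and is essential), gives $N(f)=R(\p)=\#\fix{f}$. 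In the second case, pick a lift $\alpha_0 p$ with a singular diagonal block; the argument of Lemma~\ref{fpnr}(2) shows $\fix{\alpha_0 p}$ is empty or uncountable. If it is uncountable, then since $q$ restricted to it has countable fibers, $q(\fix{\alpha_0 p})$ is uncountable and we are done. The main obstacle — and the reason the proposition is only a weaker statement than Proposition~\ref{realisenr} — is the possibility that \emph{every} such singular-block lift has \emph{empty} fixed point set while some other lift still contributes fixed points; I would need to argue that in this situation the surviving contributions still sum exactly to $N(f)$. Concretely: discard the twisted conjugacy classes whose lifts have empty fixed set (these contribute $0$ to both $\#\fix f$ and, via essentiality, to $N(f)$), and among the rest, those with all blocks invertible are singletons counted correctly, while any remaining one is uncountable. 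Packaging these cases and checking that an essential fixed point class of $f$ can never come from a lift with a singular diagonal block (so that deletion of empty classes does not disturb the Nielsen count) — this uses that essential classes lift to essential classes on $S$, as in the proof of Theorem~\ref{avefor}, forcing the corresponding $\det(I-A_i(\alpha)F_i)\neq 0$ — is the crux of the argument, after which the two-case dichotomy yields the claim.
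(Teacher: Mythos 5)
Your argument is correct, but it takes a genuinely different route from the paper's. The paper decomposes $\fix{f}$ over the \emph{finite} deck group $\PI/K$ of the \NR cover $S=\R^h/K$, applies Proposition~\ref{realisenr} as a black box to each of the finitely many lifts $\hat\alpha\hat p:S\to S$ (so each $\fix{\hat\alpha\hat p}$ is uncountable, or finite of cardinality exactly $N(\hat\alpha\hat p)$), and then matches the total count against the averaging formula of Theorem~\ref{avefor}; no fixed point index ever appears. You instead decompose over the (possibly infinite) set $\RR{\p}$ of twisted conjugacy classes, analyse each lift $\rho(\alpha)\circ p$ on $\R^h$ directly via the block-triangular form of Lemma~\ref{vormp}, and identify the count with $N(f)$ through essentiality: a class coming from a lift with all blocks $I-A_i(\alpha)F_i$ invertible is a single nondegenerate fixed point of index $\sgn{\det(I-J(\rho(\alpha)\circ p)_{x_0})}=\pm 1$, hence essential, while the remaining classes are empty (hence inessential) or uncountable (in which case one is done because $q$ has countable fibers). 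This buys you independence from the averaging formula — you only need the definition of $N(f)$ as the number of essential classes plus the standard index computation for a nondegenerate fixed point — and it shows slightly more, namely that in the finite case every fixed point class is either empty or an essential singleton; the price is the extra index argument and having to manage an infinite index set. Two simplifications to your write-up: the detour through the averaging formula in Case 1 and the appeal to ``essential classes lift to essential classes on $S$'' at the end are both unnecessary, since once every class is known to be uncountable, empty, or a nondegenerate singleton, the equality $\#\fix{f}=N(f)$ is immediate and there is nothing left to package. Finally, the fact you use implicitly — that the singularity pattern of the diagonal blocks depends only on the class $\clp{\alpha}$ — follows from $A_i(\gamma\alpha\p(\ii{\gamma}))F_i=A_i(\gamma)\bigl(A_i(\alpha)F_i\bigr)A_i(\gamma)^{-1}$, and is worth stating explicitly.
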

%As before, we implicitly assume that $\PI\subseteq\pd{h}$ acts canonically (in the sense of Section~\ref{pol}) on $\R^h$.
\begin{proof}
Let $p:\R^h\to\R^h$ be the polynomial map inducing $f$ on $\R^h/\PI$. 
Take a fully invariant, finite index \NR subgroup $K$ of $\PI$, and let $\hat p$ denote the induced map on $\R^h/K$. 
We thus have, for every $\alpha$ in $\PI$, the following commutative diagram:
$$ \begin{CD}
  \R^h @>\alpha\circ p>>\R^h \\
  @V \pi_K VV @VV \pi_K V\\
  \S @>\hat \alpha\circ \hat p>> \S\\
  @V \pi' VV @VV \pi' V\\
  \X @>\bar \alpha\circ f>> \X
\end{CD}$$
The fixed point set of $f$ decomposes as
$$\fix{f}=\bigcup_{\hat \alpha \in\PI/K} \pi'(\fix{\hat{\al}\hat p}).$$
Note that $\pi'$ has finite fibers. Hence, if $\fix{\hat\al\hat p}$ is uncountably infinite for some $\hat \alpha$ in $\PI/K$, so is $\fix{f}$. 

So suppose that $\fix{\hat\al\hat p}$ is finite for all $\hat\al \in\PI/K$. 
By Proposition~\ref{realisenr}, this implies in particular that $\#\fix{\hat\al\hat p}=0$ if $N(\hat\al\hat p)=0$. Then
\begin{align*}
    \#\fix{f}
    &=\#\bigcup_{\hat\al\in\PI/K} \pi'(\fix{\hat\al\hat p})\\
    &\leq\sum_{\hat\al\in\PI/K} \#\fix{\hat\al\hat p}\\
    &=\sum_{\hat\al\in\PI/K} N({\hat\al\hat p})\\
    &=N(f).
\end{align*}
As always $N(f)\leq \# \fix f$, we conclude that $f$ has either (uncountably) infinitely many fixed points, or precisely $N(f)$ fixed points.
\end{proof}

By \cite[Corollary~7.6]{fl13-1}, if $f$ is a \sf on an \infra manifold of type (R) and $R(f)<\infty$, then $N(f)=R(f)$. This result generalises to any \infra manifold. We first prove the following lemma:

\begin{lemma}\label{ongelijkheid}
Let $G$ be a group and let $\p$ be an automorphism of $G$. Suppose that $H$ is a normal, finite index subgroup of $G$ such that $\p(H)\subseteq H$. 
Then $$R(\p) \leq \sum_{x H \in G/H} R((\tau_x\circ \p)_{|_H})$$
with $\tau_x:G\to G:g\mapsto x g x^{-1}$ for all $x$ in $G$.

\end{lemma}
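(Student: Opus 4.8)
The plan is to build an explicit surjection from a disjoint union of twisted‑conjugacy sets onto $\RR\p$, and then just count. First I would record the harmless preliminary observation that, since $H\nor G$ and $\p(H)\subseteq H$, for every $x\in G$ the composite $\tau_x\circ\p$ sends $H$ into $H$ (indeed $(\tau_x\circ\p)(h)=x\p(h)x^{-1}$ and $xHx^{-1}=H$), so $(\tau_x\circ\p)_{|_H}$ is a genuine endomorphism of $H$ and $R((\tau_x\circ\p)_{|_H})$ is well defined. (In fact $\p(H)=H$ here, as $\p$ is an automorphism and $[G:H]<\infty$, but only $\p(H)\subseteq H$ is used.)

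Next, fix a right transversal $\{x_1,\dots,x_n\}$ of $H$ in $G$, so that $G=\bigsqcup_{i=1}^n Hx_i$ with $n=[G:H]$, and abbreviate $\psi_i:=(\tau_{x_i}\circ\p)_{|_H}$. For each $i$ I would define $\Phi_i:\RR{\psi_i}\to\RR\p$ by $[h]_{\psi_i}\mapsto[hx_i]_{\p}$. The crucial point is that $\Phi_i$ is well defined: if $h=\eta h'\psi_i(\ii\eta)$ with $\eta\in H$, then since $\psi_i(\ii\eta)=x_i\p(\ii\eta)\ii{x_i}$ a one‑line computation gives $hx_i=\eta\,(h'x_i)\,\p(\ii\eta)$, so $hx_i$ and $h'x_i$ are $\p$‑twisted conjugate in $G$ via the element $\eta\in H\subseteq G$. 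Since every $g\in G$ can be written $g=hx_i$ for a unique $i$ and some $h\in H$, we have $[g]_\p=\Phi_i([h]_{\psi_i})$, hence the map $\Phi:=\bigsqcup_i\Phi_i$ is surjective onto $\RR\p$. Counting then gives
$$R(\p)=\#\RR\p\ \le\ \sum_{i=1}^n\#\RR{\psi_i}\ =\ \sum_{i=1}^n R(\psi_i).$$

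Finally, I would check that the right‑hand side is independent of the chosen transversal, so that it equals $\sum_{xH\in G/H}R((\tau_x\circ\p)_{|_H})$: replacing $x_i$ by $h_0x_i$ with $h_0\in H$ replaces $\psi_i$ by $\tau_{h_0}\circ\psi_i$, and for any endomorphism $\psi$ of $H$ and any $h_0\in H$ the assignment $[x]_{\tau_{h_0}\circ\psi}\mapsto[xh_0]_{\psi}$ is a bijection $\RR{\tau_{h_0}\circ\psi}\to\RR\psi$ (this is the usual independence of the Reidemeister number on the choice of induced morphism, cf.\ the discussion on page~\pageref{induced-endo}).

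I do not expect a serious obstacle: the content is essentially the observation that every $\p$‑class is represented by some $hx_i$. The one place that needs care is the bookkeeping with left versus right cosets. It is essential to use \emph{right} coset representatives and to send $[h]_{\psi_i}$ to $[hx_i]_\p$ (rather than $[x_ih]_\p$): with this normalisation the twisting term $\p(\ii\eta)$ appearing on the right of $h$ slides past $x_i$ untouched, whereas the opposite convention would require $\p$ to commute with conjugation by $x_i$, which fails in general — and that mismatch is exactly why this gives only the inequality, not the addition formula.
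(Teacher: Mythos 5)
Your argument is correct and is essentially the paper's own proof: both decompose $G$ into right cosets $Hx_i$ and show via the identity $hx_i=\eta\,(h'x_i)\,\p(\ii\eta)$ that every $\p$-twisted conjugacy class is represented by some $h_Xx_i$, which is exactly your surjection $\bigsqcup_i\RR{\psi_i}\to\RR{\p}$. The only cosmetic difference is that the paper disposes of the dependence on the coset representative by citing \cite[Cor.\ 3.2]{flt08}, whereas you verify that bijection directly.
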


\begin{proof}
Note that if $x H=y H$, also $R((\tau_x\circ \p)_{|_H})=R((\tau_y\circ \p)_{|_H})$ by \cite[Cor.\ 3.2]{flt08}. 

Suppose that $\{x_1H,\dots,x_kH\}=G/H$ with $k=[G:H]$. 
Write $\p_i:= (\tau_{x_i}\circ \p)_{|_H}$ for every $i \in \{1,\dots, k\}$.  For every $X\in H/\sim_{\p_i}$, take an element $h_X\in H$ representing the $\p_i$-twisted conjugacy class $X$.
We will show that every element $g$ of $G$ is $\sim_\p$-equivalent with $h_Xx_i$ for some $i\in\{1,\dots, k\}$ and $X\in H/\sim_{\p_i}$.

Take $g \in G$. Say $g H=x_i H$ for some $i\in \{1,\dots, k\}$; write $g=hx_i$ with $h\in H$. Let $X:=[h]_{\p_i}\in H/\sim_{\p_i}$ be the Reidemeister class of $h$. There thus exists $z\in H$ such that 
 $h= z\, h_X \p_i(z^{-1})=z\, h_X \,x_i\p(z^{-1})x_i^{-1}$. Multiplying by $x_i$ on the right gives 
 $g=hx_i=z\, h_Xx_i\,\p(z^{-1})$, so that $g\sim_\p h_Xx_i$.
 
 We conclude that $\p$ has at most $R(\p_1)+\dots + R(\p_k)$ Reidemeister classes.
\end{proof}

\begin{prop}\label{rninfrasolv}
Let $f:M\to M$ be a self-map on an \infra manifold $M$. 
If $R(f)<\infty$, then $N(f)=R(f)$.
\end{prop}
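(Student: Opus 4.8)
The plan is to reduce the statement about an \infra manifold $M$ to the \NR solvmanifold case, where $N=|L|$ and the Reidemeister--Nielsen relation is already known (Lemma~\ref{rnsolv}), via the same finite cover $S=\tilde M/K$ used throughout Sections~\ref{netex} and \ref{i}. First I would pick a fully invariant, finite index \NR subgroup $K\nor_f\PI$, so that $S=\tilde M/K$ is an \NR solvmanifold covering $M$ and every self-map of $M$ lifts to $S$; let $\varphi\in\End\PI$ be induced by $f$ and let $\varphi'=\varphi|_K$. For each coset $\bar\alpha\in\PI/K$ the lift $\bar\alpha\bar f$ of $f$ to $S$ induces $\tau_\alpha\circ\varphi'$ on $K$, exactly as in the proof of Corollary~\ref{formc}. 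Since $S$ is an \NR solvmanifold, Lemma~\ref{rnsolv} gives $R(\bar\alpha\bar f)<\infty \Rightarrow N(\bar\alpha\bar f)=R(\bar\alpha\bar f)$, and $R(\bar\alpha\bar f)=\infty \Rightarrow N(\bar\alpha\bar f)=0$.

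The key inequality to establish is $R(\varphi)\leq\sum_{\bar\alpha\in\PI/K}R((\tau_\alpha\circ\varphi)|_K)$. This is precisely Lemma~\ref{ongelijkheid} applied with $G=\PI$, $H=K$, $\p=\varphi$ --- but Lemma~\ref{ongelijkheid} is stated for an \emph{automorphism} $\varphi$, whereas the $f_\ast$ induced by a self-map need only be an endomorphism. I expect this mismatch to be the main technical obstacle. Two routes are available: either re-check that the proof of Lemma~\ref{ongelijkheid} actually only uses that $\varphi$ is an endomorphism with $\varphi(K)\subseteq K$ (the argument rewrites $g=hx_i$ and produces $g\sim_\varphi h_Xx_i$ using only $\varphi(z^{-1})$, so automorphy is never genuinely needed, only the twisted-conjugacy bookkeeping and the cited \cite[Cor.\ 3.2]{flt08} and \cite[Cor.\ 3.2]{flt08}-type invariance of $R((\tau_x\circ\varphi)|_K)$ under $xK$), or else invoke a standard reduction: by \cite{kll05-1}-style arguments one may replace $f$ by a homotopic map whose induced endomorphism on the relevant finite-index subgroup behaves like (the restriction of) an affine/automorphism map, as in Section~\ref{is}. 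I would adopt the first route, stating explicitly that Lemma~\ref{ongelijkheid} holds verbatim for endomorphisms fixing $H$.

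Granting that inequality, the argument closes quickly. Assume $R(f)=R(\varphi)<\infty$. Then every summand $R((\tau_\alpha\circ\varphi)|_K)=R(\bar\alpha\bar f)$ is finite (a finite sum of nonnegative terms bounding a finite number forces each to be finite), so by Lemma~\ref{rnsolv} we have $N(\bar\alpha\bar f)=R(\bar\alpha\bar f)$ for every $\bar\alpha$. Now combine with the averaging-type identities already in hand: on one side $\#\RR\varphi=R(f)$, on the other the fixed point set decomposes as $\F f=\bigsqcup_{[\alpha]_{\varphi}}\pi(\F{\alpha\tilde f_0})$ over $\RR\varphi$, while over the cover $S$ the decomposition refines along $\PI/K$. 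Concretely, I would argue
\[
N(f)\ \leq\ R(f)\ =\ R(\varphi)\ \leq\ \sum_{\bar\alpha\in\PI/K}R((\tau_\alpha\circ\varphi)|_K)\ =\ \sum_{\bar\alpha\in\PI/K}N(\bar\alpha\bar f)\ =\ [\PI:K]\cdot N(f),
\]
where the first inequality is the universal bound $N\leq R$, and the final equality is the averaging formula (Theorem~\ref{avefor}). This chain alone is not sharp enough; the clean finish is instead to use that each essential fixed point class of $f$ lifts to essential classes upstairs (as in the proof of Theorem~\ref{avefor}, via \cite[Remark 2.7]{kll05-1}), so $N(f)$ counts exactly the $\PI/K$-orbits of essential classes on $S$, and an inessential class on $M$ with $R(f)<\infty$ cannot occur because finiteness of $R(f)$ forces each lifted $R(\bar\alpha\bar f)$ finite, hence each lifted class essential by Lemma~\ref{rnsolv}; therefore \emph{every} one of the $R(f)$ classes of $f$ is essential, giving $N(f)=R(f)$. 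The delicate point to get right is the passage ``inessential class downstairs with all lifts having finite Reidemeister number is impossible'', which is where Lemma~\ref{rnsolv} (equating $N$ and $R$ on \NR solvmanifolds) does the real work.
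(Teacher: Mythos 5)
Your overall strategy (pass to the fully invariant \NR cover $S=\tilde M/K$, apply Lemma~\ref{rnsolv} to the lifts, and combine with the averaging formula) is the same as the paper's, but the proposal has two genuine gaps at exactly the points where the paper does the real work.

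First, your justification that each $R(\bar\alpha\bar f)=R((\tau_\alpha\circ\p)|_K)$ is finite is backwards. Lemma~\ref{ongelijkheid} gives $R(\p)\leq\sum_{\bar\alpha}R((\tau_\alpha\circ\p)|_K)$, i.e.\ the sum bounds $R(\p)$ from \emph{above}; finiteness of $R(\p)$ therefore says nothing about the individual summands, which could a priori all be infinite. The correct tool is \cite[Lemma~1.1]{gw09-2}: for a finite index invariant subgroup, $R((\tau_\alpha\circ\p)|_K)=\infty$ would force $R(\tau_\alpha\circ\p)=R(\p)=\infty$. (Your worry about Lemma~\ref{ongelijkheid} being stated for automorphisms is thus moot here: that lemma is not the right ingredient for this step.)

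Second, and more seriously, your closing argument --- ``all lifted classes are essential, hence every class of $f$ is essential'' --- uses the \emph{converse} of \cite[Remark~2.7]{kll05-1}, which only asserts that an essential class downstairs lifts to essential classes upstairs. The converse is not automatic: a fixed point class of $f$ is covered by classes of the maps $\bar\alpha\bar f$ with multiplicities governed by $\F{\tau_\alpha\circ\p}$ modulo $K$, and without controlling these multiplicities an inessential class downstairs can be covered by essential classes upstairs. The paper closes this gap by proving that $\F{\tau_\alpha\circ\p}$ is trivial for every $\alpha$ (this follows from $\infty>R((\tau_\alpha\circ\p)|_K)=\prod_{i}\di{A_i(\alpha)F_i}_\infty$, which forces $\det(I-A_i(\alpha)F_i)\neq0$ for all $i$), and then invoking the averaging formula for \emph{Reidemeister} numbers \cite[Theorem~3.5]{hlp12-1}, valid under exactly this condition, to obtain $R(f)=\frac{1}{[\PI:K]}\sum_{\bar\alpha}R((\tau_\alpha\circ\p)|_K)=\frac{1}{[\PI:K]}\sum_{\bar\alpha}N(\bar\alpha\bar f)=N(f)$. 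Neither the triviality of $\F{\tau_\alpha\circ\p}$ nor any substitute for the Reidemeister averaging formula appears in your proposal, so the final equality is not actually established; as you note yourself, the chain of inequalities you do write down only yields $N(f)\leq[\PI:K]\cdot N(f)$.
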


\begin{proof}
Write
$M=\R^h/\PI$, with $\PI=\pi(M)$; then $f$ induces an endomorphism $\p$ on $\PI$. 
Take $K\nor \PI$ a finite index, fully invariant \NR subgroup of $\PI$. 
For any $\alpha\in \PI$, the map $\alpha\circ f$ lifts to a map $\hat\alpha\circ\hat f$ on $\R^h/K$ inducing $\tau_\al|_K\circ \p|_K$ on $K$. 
%(Here $\tau_\alpha:\PI\to\PI:x\mapsto\al x\ii\al$.) 

By the averaging formula, Theorem~\ref{avefor},
we know that 
\begin{align*}
N(f)&=\dfrac{1}{[\PI:K]}\sum_{\bar\alpha\in\PI/K} N(\hat\alpha\circ\hat f)\\
&=\dfrac{1}{[\PI:K]}\sum_{\bar\alpha\in\PI/K} \0{R(\hat\alpha\circ\hat f)}\\
&=\dfrac{1}{[\PI:K]}\sum_{\bar\alpha\in\PI/K} \0{R((\tau_\alpha\circ\p)|_K)}.
\end{align*}

As $R(\p)<\infty$ and $\PI/K$ is finite, also $R((\tau_\alpha\circ\p)|_K)<\infty$ for all $\alpha \in\PI$. 
Indeed, if $R((\tau_\alpha\circ\p)|_K)$ were infinite, $R(\tau_\al\circ \p)=R(\p)$ would be infinite as well \cite[Lemma~1.1]{gw09-2}.
Hence
$$N(f)=\dfrac{1}{[\PI:K]}\sum_{\bar\al\in\PI/K}R((\tau_\al\circ\p)|_K)
.$$
The proposition thus follows from \cite[Theorem~3.5]{hlp12-1} once we show that $\F{\tau_\alpha\circ\p}\subseteq K$ for all $\alpha\in\PI$.
We will show in fact that $\mathrm{fix}(\tau_\alpha\circ\p)$ is trivial. 
  As $K\f \PI$ and $\PI$ is torsion free, it  suffices to show that $\mathrm{fix}((\tau_\alpha\circ\p)|_K)$ is trivial.

In keeping with our previous notation, let $\{A_0(\alpha)\circ F_0, \ldots ,A_c(\alpha)\circ F_c\}$ denote the linearisation of $(\tau_\alpha\circ\p)|_K$. From 
$$\infty> R((\tau_\alpha\circ\p)|_K)=\prod_{i=0}^c \,\di{A_i(\alpha)\circ F_i}_\infty,$$ it easily follows that $\det(I-A_i(\alpha)\circ F_i)\neq 0$ for all $i\in\{0,\ldots c\}.$ Hence $\mathrm{fix}(A_i(\alpha)\circ F_i)=1$ for all $i$, implying $\mathrm{fix}((\tau_\alpha\circ\p)|_K)=1$ as well.  
\end{proof}

We conclude this section with the following example.

\begin{example}\label{vb}
Consider the group $\PI:=\Z\rtimes_{-1}\Z.$ Then $K:=\Z\times 2\Z\cong \Z^2$ is a fully invariant \NR subgroup of index $2.$
We can realise $\PI$ as a subgroup of $\pd{2}$ via
$$\rho:\PI\to\pd{2}:(k,n)\mapsto \quad 
\begin{pmatrix}
r_1\\r_2
\end{pmatrix}
\mapsto
\begin{pmatrix}
(-1)^nr_1+k\\r_2+n
\end{pmatrix}.$$
Note that $\rho:\PI\to\pd 2$ is of canonical type with respect to $1\nor K\nor_f \PI$.

For ease of notation, we write $(k,n)$ also as $z^kt^n$. 
For every $a\in\Z$ and $c\in 2\Z+1$, we can define an endomorphism $\p$ by $\p(z)=z^a$ and $\p(t)=t^c$. 
Then
\begin{itemize}
    \item $\p':=\p|_K$ sends $z\mapsto z^a$ and $t^2\mapsto t^{2c} $;
    
    \item $\tau_t:K\to K$ sends $z\mapsto \ii z$ and $t^2\mapsto t^2$;
    
    \item $\tau_t\circ \p'$ sends $z\mapsto z^{-a}$ and $t^2\mapsto t^{2c}$.
\end{itemize}
It is easily verified that
$$p:\R^2\to\R^2: \begin{pmatrix}
r_1\\r_2
\end{pmatrix}
\mapsto
\begin{pmatrix}
ar_1\\cr_2
\end{pmatrix}$$
satisfies $p\circ\rho(\gamma)=\rho(\p(\gamma))\circ p$ for every $\gamma\in\PI$. We thus have maps 

$$ \begin{CD}
  \R^h @>p>>\R^h \\
  @V \pi_K VV @VV \pi_K V\\
  \S @>\hat p>> \S\\
  @V \pi' VV @VV \pi' V\\
  \X @>f>> \X
\end{CD}$$
inducing $\p$ and $\p'$.
Now 
\begin{align*}
    N(f)
    &=\sum_{\hat\alpha \in \PI/K}\di{\tau_\al\circ\p'}\\
    &=|(1-a)(1-c)|+|(1+a)(1-c)|
\end{align*}
\begin{itemize}
    \item Take $a=c=1$. Then $N(f)=0$. As $\fix{p}=\fix{\mathrm{Id}}$ is infinite, so is $\fix{f}$. 
    
    \item Take $a=c=-1$. Then $N(f)=4$, but $\fix{f}$ is infinite as well, as $\fix{\rho(t)\circ p}$ is infinite:
    $$\rho(t)\circ p
    \begin{pmatrix}
    r_1\\r_2
    \end{pmatrix}
    =
    \rho(t)
    \begin{pmatrix}
    -r_1\\-r_2
    \end{pmatrix}
    =
    \begin{pmatrix}
    r_1\\-r_2+1
    \end{pmatrix}.$$
    
    Note that, although $N(f)\neq 0$, the map $f$ has $R(f)=\infty.$ Indeed, the subgroup
    $G:=\langle z \rangle$ is invariant under $\p$ and $\PI/G\cong\Z$, so we can use \cite[Lemma~2.1]{dtv19} to compute that
    
    \begin{align*}
        R(\p)
        &=\sum_{[\hat\al]\in\RR{-\mathrm{Id}|_\Z}} R(\tau_\alpha\circ\p|_{\Z})\\
        &=R(-\mathrm{Id}|_{\Z})+R(-\mathrm{Id}|_{\Z}\circ -\mathrm{Id}|_{\Z})\\
        &=R(-\mathrm{Id}|_{\Z})+R(\mathrm{Id}|_{\Z})\\
        &=2+\infty\\
        &=\infty.
    \end{align*}
\end{itemize}
\end{example}

\appendix
\section*{Appendix}
We prove:
\begin{lomma}
Let $X\in\Z^{n\times n}$ and $\Phi\in \Q^{m\times m}$ be matrices and let $A:\Z^m\to\operatorname{SL}_n(\Z)$ be an endomorphism such that $A(v)$ is net for all $v\in\Z^m$.
Suppose that $\Phi$ does not have $1$ as an eigenvalue, and that there exists $k\in\N$ such that $\Phi(k\Z^m)\subseteq \Z^m$ and $XA(kv)=A(\Phi(kv))X$ for all $v\in\Z^m$. 
Then $\de{A(v)X}=\de{X}$ for all $v\in\Z^m$. 
\end{lomma}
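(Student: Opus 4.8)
The plan is to work over $\C$ and to use systematically that the matrices $A(v)$, $v\in\Z^m$, pairwise commute. Since each $A(v)$ lies in $\mathrm{SL}_n(\Z)$ it is invertible, so for $w\in k\Z^m$ the matrix $A(w)X$ is conjugate (via $A(w)$) to $XA(w)=A(\Phi(w))X$, which already gives $\de{A(w)X}=\de{A(\Phi(w))X}$ on $k\Z^m$; the real task is to upgrade this to all of $\Z^m$, and for that I would use the structure of the commuting family. Decompose $\C^n=\bigoplus_{\chi}W_\chi$ into the simultaneous generalised weight spaces of $\{A(v)\}$, indexed by the finitely many weights $\chi\colon\Z^m\to\C^*$; on $W_\chi$ every $A(v)$ acts as $\chi(v)U_\chi(v)$ with $v\mapsto U_\chi(v)$ a homomorphism into unipotent matrices. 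The hypothesis that each $A(v)$ is net enters here in the form: no product of the eigenvalues $\chi_1(v),\dots,\chi_n(v)$ of $A(v)$ is a nontrivial root of unity.

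The next step is to read off from $XA(kv)=A(\Phi(kv))X$ the constraint it puts on $X$: one gets $X(W_\chi)\subseteq\bigoplus W_\sigma$, summed over the weights $\sigma$ with $\sigma\circ\Phi=\chi$ on $k\Z^m$. Because two distinct weights cannot agree on $k\Z^m$ (their quotient would be a character with values in the $k$-th roots of unity, contradicting netness), assigning to $\sigma$ the weight $\sigma\circ\Phi$ (when this is a weight) defines a partial self-map of the finite weight set, and $X$ respects the induced directed graph on the weights, which is a disjoint union of trees attached to cycles, with $X$ pushing $W_\chi$ ``away'' from the cycles. Ordering the weights by their distance to the cycles makes both $X$ and $A(v)X$ block lower triangular, with the cyclic blocks on the diagonal and all remaining diagonal blocks zero. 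Hence $\de{A(v)X}$ and $\de{X}$ both factor as products over the cycles, and it remains to treat a single cycle $\chi_0,\dots,\chi_{l-1}$ with $\chi_{i+1}\circ\Phi=\chi_i$ on $k\Z^m$.

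On such a cycle I would first observe that $\pi:=\prod_i\chi_i$ satisfies $\pi\circ\Phi=\pi$ on $k\Z^m$, hence is trivial on $(\Phi-I)(k\Z^m)$; since $1$ is not an eigenvalue of $\Phi$, this is a finite-index subgroup of $\Z^m$, so $\pi$ has finite image, $\pi(v)$ is a root of unity for every $v$, and netness forces $\pi\equiv 1$. Evaluating the block-cyclic determinant then gives $\de{(A(v)X)_{\mathrm{cycle}}}=\de{Q(v)}$, where $Q(v)$ is the cyclic product of the $X$-blocks of the cycle with the unipotent factors $U_{\chi_i}(v)$ interleaved, and $Q(0)$ is precisely the product entering $\de{X_{\mathrm{cycle}}}$. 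So the lemma comes down to showing that these unipotent interleavings do not change $\de{\,\cdot\,}$, i.e.\ that $\de{Q(v)}=\de{Q(0)}$ for all $v$.

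This last reduction is where the genuine work lies, and I expect it to be the main obstacle. Restricting the intertwining relation to the cycle and cancelling the (now trivial) scalars yields $Y_i\,U_{\chi_i}(kw)=U_{\chi_{i-1}}(\Phi(kw))\,Y_i$ for the $X$-blocks $Y_i$; after taking logarithms (the $U_{\chi_i}(v)$ commute among themselves) this says that the linear span of the matrices $\log U_{\chi_i}(v)$ carries, through the $Y_i$, an action conjugate to $\Phi$, in particular one without $1$ as an eigenvalue, and I would then establish $\de{Q(v)}=\de{Q(0)}$ by a computation organised by induction on the nilpotency class of these unipotent factors, the base case of trivial unipotent factors being the scalar statement already proved. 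The case in which $X$ is not invertible needs a separate but parallel treatment (splitting off $\ker X$, which is carried into itself by the finite-index subgroup $A(k\Z^m)$ and hence respects the weight decomposition), while the special case $k=1$ is essentially Keppelmann and McCord's Theorem~4.2 and can serve as a template. Reassembling the contributions of all cycles then gives $\de{A(v)X}=\de{X}$ for every $v\in\Z^m$.
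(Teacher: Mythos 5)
There is a genuine gap, and it sits exactly where you say you expect "the main obstacle": the claim $\de{Q(v)}=\de{Q(0)}$ for the cyclic product with unipotent factors interleaved is the entire unipotent half of the lemma, and your plan for it (take logarithms, induct on the nilpotency class) is a sketch, not an argument. Note moreover that your $Q(v)$ is of the form $\tilde U(v)\tilde Y$ with $\tilde U(v)$ block-diagonal unipotent and $\tilde Y$ a fixed block-cyclic matrix, so finishing your route amounts to proving precisely the statement "$\de{U(v)M}=\de{M}$ whenever $M$ \ap{}commutes with a unipotent homomorphism $U$" — which is where the paper starts. The paper's proof first performs the multiplicative Jordan decomposition $A(v)=U(v)T(v)$ globally, checks via a Zariski-closure argument that $X$ \ap{}commutes with $U$ and with $T$ separately, and then disposes of the unipotent case by quoting Keppelmann--McCord's Theorem~4.2 for arguments in $d\Z^m$ and extending to all of $\Z^m$ by the observation that $t\mapsto\de{U(x)^{t}M}-\de{M}$ is a polynomial in $t$ vanishing on $d\Z$, hence identically zero. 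Some such input is unavoidable; without it your proposal does not close. (A smaller unaddressed point: your partial self-map of the weight set need not be injective when $\Phi$ is singular, since two weights can agree on $\Phi(k\Z^m)$ without being equal; the tree-plus-cycle combinatorics survives this, but the "block lower triangular" picture as you state it does not quite.)

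On the other hand, the semisimple part of your proposal is correct and is in fact cleaner than the paper's. Where the paper expands $\de{T(v)M}$ over permutations and runs a fairly long computation with the vectors $R_i$ and $\Theta_i$ to show that the product of eigenvalues around a cycle is $1$, you observe directly that this product $\pi=\prod_i\chi_i$ satisfies $\pi\circ\Phi=\pi$ on $k\Z^m$, hence is trivial on the finite-index subgroup $(\Phi-I)(k\Z^m)$ of $\Z^m$ (finite index because $1$ is not an eigenvalue of $\Phi$), hence takes root-of-unity values everywhere, hence is trivial by netness. That shortcut is worth keeping; but you must still supply the unipotent step, most economically by adopting the paper's Step 1 reduction and its polynomial extension trick.
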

\begin{proof}
We follow the matrix analysis carried out by Keppelmann and McCord 
\cite[Section~4]{km95-1}.
This analysis consist of three steps.

\stepb{Step 1.} Reduction to the unipotent and semisimple case.

\medskip

For $v\in\Z^m$, write $A(v)=U(v)T(v)$ with $U(v)$ unipotent, $T(v)$ semisimple and $[U(v),T(v)]=1$. 
This defines morphisms $U$, $T:\Z^m\to\mathrm{GL}_n(\Q)$ and $[U(v),T(w)]=1$ for $v\neq w\in \Z^m$, too.
We show that it is sufficient to prove the lemma for $A=U$ ({the unipotent case}) and $A=T$ ({the semisimple case}).

Following \cite{km95-1}, we say that \textit{$X$ \ap commutes with $A$} if there exists $k\in\N$ such that $XA(kv)=A(\Phi(kv))X$ for all $v\in\Z^m$.

\stepi{Observation: $X$ \ap commutes with both $U$ and $T$.}

\medskip

\medskip

Set $B(v):=\left(\begin{smallmatrix} A(kv)&0\\  0&A (\Phi(kv))\end{smallmatrix}\right)$ for all $v$ in $\Z^m$.
Then 
$$B_u(v):=\left(\begin{smallmatrix} U(kv)&0\\  0&U (\Phi(kv))\end{smallmatrix}\right)\quad  
\text{and} 
\quad B_s(v):=\left(\begin{smallmatrix} T(kv)&0\\  0&T (\Phi(kv))\end{smallmatrix}\right)$$ 
are the unipotent and semisimple part of $B(v)$, respectively.
Consider the subgroup $B:=\{B(v)\mid v\in\Z^m\}$ of $\GL{2n}$.
The relation $XA(kv)=A(\Phi(kv))X$ is polynomial in the coefficients of $B(v)$. 
As $B_u(v)$ and $B_s(v)$ are contained in the Zariski closure of $B$, they too must satisfy this relation. So $X$ \ap commutes with $U$ and with $T$.
\eindebewijs

\medskip

Suppose now that 
$$\begin{aligned}
    &(\ast)\ \de{U(v)M}=\de{M}\text{ for every } M \text{ \ap commuting with } U;\\
    &(\star)\  \de{T(v)M}=\de{M} \text{ for every }M \text{ \ap commuting with }T.
\end{aligned}$$

Say $k$ satisfies $XU(kv)=U(\Phi(kv))X$ for all $v\in\Z^n$. Then for all $v, w\in\Z^n$,
\begin{align*}
    T(v)XU(kw)&=T(v)U(\Phi(kw))X\\
    &=U(\Phi(kw))T(v)X,
\end{align*}
so in particular, $T(v)X$ \ap commutes with $U$.
Hence
\begin{align*}
    \de{A(v)X}
    &=\de{U(v)T(v)X}\\
    &\overset{(\ast)}{=}\de{T(v)X}\\
    &\overset{(\ast\ast)}{=}\de{X}.
\end{align*}
 We prove ($\ast$) in step 2 and ($\ast\ast$) in step 3.

\stepb{Step 2.} The unipotent case.

\medskip

\medskip

Suppose that $M\in M_n(\C)$ \ap commutes with $U$, say $d\in\N$ satisfies $MU(dv)=U(\Phi(dv))M$ for all $v\in\Z^m$. From the original version of this lemma, \cite[Theorem~4.2]{km95-1}, we already know that
\begin{align*}
\de{U(d\,v)M}&=\de{M}\\
\intertext{for all $v\in\Z^m$. Fix $x\in\Z^m$. Then for all $z\in\Z$, also}
\de{U(dz\, x)M}&=\de{M}. 
\end{align*}
However, as $U(x)$ is unipotent, the entries of $U(t\, x)=U(x)^{t}$, $t\in\Z$, are polynomials in $t$ (depending on the entries of $U(x)$, of course). 
Hence $$\de{U(t\, x)M}-\de{M}\in\Q[t]$$ is a polynomial vanishing on $d\Z$, so it must be zero. We conclude that $\de{U(x)M}=\de{M}$, as required.

\stepb{Step 3.} The semisimple case.

\medskip

Suppose that $M\in M_n(\C)$ \ap commutes with $T$; we are to show that $\de{T(v)M}=\de{M}$ for all $v\in\Z^m$.

Choose a basis of common eigenvectors $\{f_j\}_{j=1,\ldots, n}$ of the $T(v)$'s. 
Let $[x_{ij}]\in M_n(\C)$ represent $M$ with respect to the basis $\{f_j\}$, and let $\lambda_j(v)$ be the eigenvalue of $T(v)$ associated to $f_j$. 
Then, in this notation, 
$$\de{T(v)M}=\sum_{\sigma\in\Sn}\left(\sgn \sigma\prod_{i=1}^n \, \delta_{i\sigma(i)}-\lambda_i(v)\, x_{i\sigma(i)}\right)$$
denoting $\delta_{ij}$ the Kronecker delta.
It is therefore sufficient to prove the following:

\stepi{Reduction 1. } $\forall \sigma\in\Sn: \prod_{i=1}^n \, \delta_{i\sigma(i)}-\lambda_i(v) \, x_{i\sigma(i)}=\prod_{i=1}^n \, \delta_{i\sigma(i)}-x_{i\sigma(i)}$

\medskip

Take $\sigma\in\Sn.$ Write $\sigma=\sigma_1\circ \cdots \circ \sigma_l$ as a product of disjoint cycles $\sigma_i$.
For each element in $\fix \sigma$, we add a `cycle' of length $1$. Formally, denoting $\fix\sigma=\{e_1,\ldots, e_d\}$ with $d=\#\fix\sigma$, we set $\sigma_{l+j}:=(e_j)$ for $j\in\{1,\ldots, d\}$. 

We can now partition the set $\{1,\ldots, n\}$ according to these cycles: define
$$V(\sigma_j):=
\begin{cases}
\{1,\ldots,n\}\setminus\fix{\sigma_j} 
&\text{ if $j\leq l$,}\\
\{e_j\} &\text{ if $j>l$.}
\end{cases}$$
Setting $r=d+l$, we see that 
$\{1,\ldots, n\}=\cup_{j=1}^r V(\sigma_j)$,
so we can further reduce to 

\stepi{Reduction 2. } $\forall j\in\{1,\ldots r\}: \displaystyle\prod_{i\in V(\sigma_j)}  \delta_{i\sigma_j(i)}-\lambda_i(v) \, x_{i\sigma_j(i)}=\displaystyle\prod_{i\in V(\sigma_j)} \delta_{i\sigma_j(i)}-x_{i\sigma_j(i)}$

Take $j\in\{1,\ldots, r\}$. 
Write
$$\sigma_j=\left(h\  \sigma_j(h)\  \sigma_j^2(h)\  \ldots \  \sigma_j^{s-1}(h)\right).$$
To shorten notation, we write $\sigma^i:=\sigma_j^i(h)$. 
So $\sigma^{s+1}=\sigma^1$.
We have to show that
\begin{equation}
\prod_{i=1}^s \delta_{\sigma^i\sigma^{i+1}}-\lambda_{\sigma_i}(v)\, x_{\sigma^i\sigma^{i+1}}
=
\prod_{i=1}^s \delta_{\sigma^i\sigma^{i+1}}-x_{\sigma^i\sigma^{i+1}}. \tag{$\circ$}    
\end{equation}

We examine this statement more closely by distinguishing the cases $s=1$ and $s>1$.
\begin{itemize}
\item If $s=1$, statement ($\circ$)
reads $1-\lambda_h(v)\,x_{hh}=1-x_{hh}$, or equivalently, $x_{hh}=0$ or $\lambda_h(v)=1$.

\item If $s>1$, statement ($\circ$)
reads $\prod_{i=1}^s\lambda_{\sigma^i}(v)\,x_{\sigma^i\sigma^{i+1}}=\prod_{i=1}^s x_{\sigma^i\sigma^{i+1}}$, or equivalently, $\prod_{i=1}^s x_{\sigma^i\sigma^{i+1}}=0$ or $\prod_{i=1}^s\lambda_{\sigma^i}(v)=1$.

\end{itemize}
So in both cases, statement ($\circ$) is equivalent to $\prod_{i=1}^s x_{\sigma^i\sigma^{i+1}}=0$ or $\prod_{i=1}^s\lambda_{\sigma^i}(v)=1$. We will show the following:

\stepi{Reduction 3. If $\prod_{i=1}^s x_{\sigma^i\sigma^{i+1}}\neq 0$, then $\prod_{i=1}^s\lambda_{\sigma^i}(v)=1$.}

\medskip

So, assume that $\prod_{i=1}^s x_{\sigma^i\sigma^{i+1}}\neq 0$. As $M$ \ap commutes with $T$, there exists $k\in\N$ such that $MT(kw)=T(\Phi(kw))M$ for every $w\in\Z^m$. Take $i\in\{1,\ldots, s\}$. 
Then
\begin{align*}
    MT(kw)f_{\sigma^{i+1}}
    &=M\lambda_{\sigma^{i+1}}(kw)f_{\sigma^{i+1}}\\
    &=\sum_{j=1}^n \lambda_{\sigma^{i+1}}(kw)x_{j\sigma^{i+1}}f_j\\
\intertext{and}
    T(\Phi(kw))Mf_{\sigma^{i+1}}
    &=T(\Phi(kw))\left(\sum_{j=1}^nx_{j\sigma^{i+1}}f_j\right)\\
    &=\sum_{j=1}^n x_{j\sigma^{i+1}} T(\Phi(kw))(f_j)\\
    &=\sum_{j=1}^n x_{j\sigma^{i+1}} \lambda_j(\Phi(kw))f_j.
\end{align*}
Equating $MT(kw)=T(\Phi(kw))M$, we see that $ x_{j\sigma^{i+1}}\neq 0$ implies  $\lambda_{\sigma^{i+1}}(kw)=\lambda_j(\Phi(kw))$ for all $j\in\{1,\ldots, n\}$. 
As $x_{\sigma^i\sigma^{i+1}}\neq 0$ by assumption, 
\begin{equation}
    \lambda_{\sigma^{i+1}}(kw)=\lambda_{\sigma^i}(\Phi(kw))
    \tag{$\bullet$}
\end{equation}
for all $i\in \{1,\ldots , s\}$ and $w\in\Z^m$. %(Note that by definition, $\sigma^{s+1}=\sigma^1$.)

Take a basis $\{e_1,\ldots, e_m\}$ of $\Z^m$ such that $\{e_1,\ldots, e_q\}$ (viewed as subset of $\R^m)$ spans $\ker((\Phi^T)^s-I)$ for some $q\in\{0,\ldots, m\}$.
Write $\Phi=[\phi_{ij}]$ with respect to the basis $\{e_j\}$, and write $\lambda_{i,j}:=\lambda_{\sigma^i}(e_j)$. 
(So again $\lambda_{s+1,j}=\lambda_{1,j}$.)

We have to show that $\prod_{i=1}^s\lambda_{\sigma^i}(v)=1$ for every $v\in\Z^m$. 
If $v=\sum_{j=1}^m \alpha_j e_j$, 
then $T(v)=\prod_{j=1}^m T(e_j)^{\alpha_j}$, 
hence $\lambda_{\sigma^i}(v)=\prod_{j=1}^m \lambda_{i,j}^{\alpha_j}$ for every $i\in\{1,\ldots, s\}$.
Therefore, it is sufficient to prove that

\stepi{Reduction 4. $\prod_{i=1}^s \lambda_{i,j}=1$ for all $j\in\{1,\ldots ,m\}$.}

\medskip

We know exploit condition ($\bullet$). Thereto, take $j\in\{1,\ldots, m\}$ and choose $r_{i,j}$, $\theta_{i,j}\in\R$, $i\in\{1,\ldots, s\}$, satisfying $\lambda_{i,j}=e^{r_{i,j}+2\pi i \,\theta_{i,j}}$. 
In this notation,
\begin{align*}
\lambda_{\sigma^{i+1}}(ke_j)
&={\lambda_{i+1, j}}^k\\
&=e^{kr_{i+1,j}}e^{2\pi i\, k\theta_{i+1,j}}\\
\intertext{when we agree that $r_{s+1,j}:=r_{1,j}$ and $\theta_{s+1,j}:=\theta_{1,j}$. Furthermore, since $\Phi(ke_j)=\sum_{\alpha=1}^m k\phi_{\alpha j} e_\alpha$,}
\lambda_{\sigma^i}(\Phi(ke_j))
&=\prod_{\alpha =1}^m\lambda_{i,\alpha}^{k\phi_{\alpha j}}\\
&=\prod_{\alpha=1}^m e^{k\phi_{\alpha j}r_{i,\alpha}+2\pi i \, k\phi_{\alpha j}\theta_{i, \alpha}}\\
&=e^{k\sum_{\alpha=1}^m \phi_{\alpha j }r_{i,\alpha}}
e^{2\pi i\, k \sum_{\alpha=1}^m\phi_{\alpha j}\theta_{i,\alpha}}.
\end{align*}
Imposing condition ($\bullet$) implies that for all $i\in\{1,\ldots, s\}$, 

\begin{align*}
kr_{i+1, j}&=k\sum_{\alpha=1}^m \phi_{\alpha j }r_{i,\alpha}
& 
&\text{and}
&
k\theta_{i+1, j} &\equiv k\sum_{\alpha=1}^m \phi_{\alpha j }\theta_{i,\alpha} \mod \Z.\\
\intertext{Define $R_i$, $\Theta_i\in\R^m$ as the vectors with $j$-th component equal to $r_{i,j}$ and $\theta_{i,j}$, respectively. Then}
R_{i+1}&=\Phi^T R_i
&
&\text{and}
&
\Theta_{i+1}&\equiv\Phi^T\Theta_i \mod \Q^m.
\end{align*}
Note that $R_{s+1}=R_1$ and $\Theta_{s+1}=\Theta_1$. Therefore, the above implies that
\begin{itemize}
\item $\Phi^T\left(\sum_{i=1}^s R_i\right)=\sum_{i=1}^s R_i$;
\item ${\Phi^T}^s(R_i)=R_i$;
\item ${\Phi^T}^s(\Theta_i)\equiv\Theta_i \mod \Q^m$.
\end{itemize}

It follows from the first item that $\sum_{i=1}^s R_i=0$ for $\Phi^{T}$ (and $\Phi$) does not have eigenvalue 1.
    
The second item implies that $R_i\in\ker({\Phi^T}^s-I)$, hence $r_{i,j}=0$ if $j>q$. As $\lambda_{i,j}$ cannot be a nontrivial root of unity, $\theta_{i,j}$ must either be $0$ or irrational if $j>q$.
In fact, $\theta_{i,j}=0$ as $({\Phi^T}^s-I)(\Theta_i)$ must lie inside $\Q^m$ and 
$${\Phi^T}^s-I=
\begin{pmatrix}
0 & *\\
0 & \Phi'
\end{pmatrix}$$
for some $\Phi'\in\mathrm{GL}_{m-q}(\Q)$ (with respect to the basis $\{e_j\}$). So ${\Phi^T}^s(\Theta_i)=\Theta_i$ as well.

However, then $\Phi^T$ fixes $\sum_{i=0}^{s-1}{\Phi^T}^i(\Theta_1)$, implying $\sum_{i=0}^{s-1}{\Phi^T}^i(\Theta_1)=0$ for $\Phi$ does not have $1$ as an eigenvalue. As $\Theta_{i+1}\equiv {\Phi^T}^i(\Theta_1) \mod \Q^m$, we conclude that $\sum_{i=1}^s \Theta_i\in \Q^m$. 

Translating $\sum_{i=1}^s R_i=0$ and $\sum_{i=1}^s \Theta_i\in \Q^m$ back to the $r_{i,j}$/$\theta_{i,j}$-notation gives 
$$\sum_{i=1}^s r_{i,j}=0 \quad\text{and}\quad \sum_{i=1}^s \theta_{i,j}\in\Q$$
for all $j\in\{1,\ldots, m\}$.
Hence
\begin{align*}
    \prod_{i=1}^s \lambda_{i,j}
    &=\prod_{i=1}^s e^{r_{i,j}+2\pi i\, \theta_{i,j}}\\
    &=e^{\sum_{i=1}^s r_{i,j} + 2\pi i\sum_{i=1}^s \theta_{i,j}}\\
    &\in e^{2\pi i \,\Q}
\end{align*}
is a root of unity. As $T(e_j)$ is net, this implies that $\prod_{i=1}^s \lambda_{i,j}=1$, concluding the proof.
\end{proof}

\label{References}

\end{document}